\newtheorem{Theorem}{Theorem}[section]
\newtheorem{Corollary}[Theorem]{Corollary}
\newtheorem{Lemma}[Theorem]{Lemma}
\theoremstyle{definition}
\newtheorem{Example}[Theorem]{Example}
\newtheorem{Remark}[Theorem]{Remark} }
\numberwithin{equation}{section}
\newcommand{\ta}{\theta}
\newcommand{\C}{\mathbb C}
\begin{document}

\allowdisplaybreaks

\renewcommand{\thefootnote}{$\star$}

\newcommand{\arXivNumber}{1602.09027}

\renewcommand{\PaperNumber}{039}

\FirstPageHeading

\ShortArticleName{Elliptic Hypergeometric Summations by Taylor Series Expansion and Interpolation}

\ArticleName{Elliptic Hypergeometric Summations\\ by Taylor Series Expansion and Interpolation\footnote{This paper is a~contribution to the Special Issue
on Orthogonal Polynomials, Special Functions and Applications.
The full collection is available at \href{http://www.emis.de/journals/SIGMA/OPSFA2015.html}{http://www.emis.de/journals/SIGMA/OPSFA2015.html}}}

\Author{Michael J.~SCHLOSSER and Meesue YOO}
\AuthorNameForHeading{M.J.~Schlosser and M.~Yoo}
\Address{Fakult\"at f\"ur Mathematik, Universit\"at Wien,\\
Oskar-Morgenstern-Platz~1, A-1090 Vienna, Austria}
\Email{\href{mailto:michael.schlosser@univie.ac.at}{michael.schlosser@univie.ac.at},
\href{mailto:meesue.yoo@univie.ac.at}{meesue.yoo@univie.ac.at}}
\URLaddress{\url{http://www.mat.univie.ac.at/~schlosse/}}

\ArticleDates{Received March 01, 2016, in f\/inal form April 13, 2016; Published online April 19, 2016}

\Abstract{We use elliptic Taylor series expansions and interpolation
to deduce a number of summations for elliptic hypergeometric
series. We extend to the well-poised elliptic case results that in
the $q$-case have
previously been obtained by Cooper and by Ismail and Stanton.
We also provide identities involving S.~Bhargava's
cubic theta functions.}

\Keywords{elliptic hypergeometric series; summations;
Taylor series expansion; interpolation}

\Classification{30E05; 33D15; 33D70; 33E05; 33E20}

\renewcommand{\thefootnote}{\arabic{footnote}}
\setcounter{footnote}{0}

\section{Introduction}\label{sec:intro}

Previously, one of us \cite{Schl} established an elliptic Taylor expansion
theorem which extends Ismail's~\cite{Is1} expansion
for functions symmetric in $z$ and $1/z$ in terms of the
Askey--Wilson monomial basis.
The expansion theorem in \cite{Schl} involves a special case
of Rains'~\cite{Ra2} elliptic extension of the Askey--Wilson
divided dif\/ference operator.
As applications, new simple proofs were given for
Frenkel and Turaev's~\cite{FT} elliptic extensions of Jackson's ${}_8\phi_7$
summation and of Bailey's ${}_{10}\phi_9$ transformation.
A further application concerned the computation of the connection
coef\/f\/icients of Spiridonov's~\cite{Sp} elliptic extension of Rahman's
biorthogonal rational functions.

Here we take a closer look at elliptic Taylor expansions.
In particular, we describe the action of the $m$-th elliptic
divided dif\/ference on a function, expressed in terms of the function.
In the ordinary case, if $\delta_h$ denotes the central dif\/ference
operator, def\/ined by $\delta_h\,f(x)=f(x+\frac h2)-f(x-\frac h2)$,
the $m$-th dif\/ference is given by
\begin{gather*}
\delta_h^m f(x)=\sum_{k=0}^m(-1)^k\binom mk
f\left(x+\left(\frac m2-k\right)h\right).
\end{gather*}
For the $q$-case, where $\delta_h$ is replaced
by the Askey--Wilson operator~$\mathcal D_q$,
acting on functions~$f(z)$ symmetric in $z$ and $1/z$,
an explicit formula for~$\mathcal D_q^mf(z)$ was
established by Cooper~\cite{Cooper}.
One of the results of our paper concerns
an extension of Cooper's formula to the elliptic setting.
We remark that Ismail, Rains and Stanton~\cite{IRS} independently
have also proved an elliptic extension of
Cooper's formula which turns out to be
equivalent to our result by a multiplication of operators.
In \cite{IS2}, Ismail and Stanton have used
Cooper's explicit formula to work out an explicit
interpolation formula for polynomials symmetric in
$z$ and $1/z$. Likewise, we use our elliptic extension
of Cooper's formula to f\/ind an elliptic interpolation formula.
Application of this formula yields single and multivariable
identities of Karlsson--Minton type.

Ismail and Stanton~\cite{IS} not only considered Taylor expansions
in terms of the Askey--Wilson monomial basis
$\{(az,a/z;q)_n,\, n\ge 0\}$
(see the subsequent subsection for the $q$-shifted factorial notation),
but also in terms of the basis
$\big\{\big(q^{\frac 14}z,q^{\frac 14}/z;q^{\frac 12}\big)_n,\, n\ge 0\big\}$,
for which they deduced quadratic summations as applications.
We are able to extend Ismail and Stanton's analysis and
provide, in particular, a Taylor expansion for an elliptic extension
of this other basis.
We note that in addition, Ismail and Stanton~\cite[Theorem~2.2]{IS} gave
a Taylor expansion theorem for the basis
$\big\{(1+z^2)\big({-}q^{2-n}z^2;q^2\big)_{n-1}z^{-n},\, n\ge 0\big\}$,
however this result (which involves an evaluation at $z=0$)
appears not to extend to the elliptic setting.

Finally, we consider series partially involving products of
S.~Bhargava's~\cite{Bhar} cubic theta functions.
Such series have not been considered before.
We introduce two dif\/ferent cubic theta extensions of
shifted factorials which are designed in such forms that they behave well
under the iterated action of the elliptic Askey--Wilson operator.
Applications of
Taylor expansion yield cubic theta extensions of
Jackson's~$_8\phi_7$ summation formula
and of a quadratic summation of Gessel and Stanton.

Before we present our new results, to make this paper
more self-contained, we brief\/ly review some
important material from the theory of elliptic hypergeometric series.
Afterwards we turn to the Askey--Wilson operator and
its elliptic extension, and then we provide our new results.

\subsection{Elliptic hypergeometric series}
For basic hypergeometric series, see Gasper and Rahman's
textbook \cite{GR}. Elliptic hypergeometric series
are treated there in Chapter~11.

By def\/inition, a function is {\em elliptic} if it is meromorphic and
doubly periodic. It is well known (cf., e.g.,~\cite{W}) that
elliptic functions can be built from quotients of
theta functions.

As building blocks we will use the \emph{modified Jacobi theta function}
with argument~$x$ and nome~$p$, def\/ined (in multiplicative notation) by
\begin{gather*}
\theta(x;p)= \prod_{j\ge 0}\big(\big(1-p^j x\big)\big(1- p^{j+1}/x\big)\big),\qquad
\theta(x_1,\dots, x_m;p)=\prod_{k=1}^m \theta(x_k;p),
\end{gather*}
where $x,x_1,\dots, x_m\ne 0$, $|p|<1$.

The modif\/ied Jacobi theta functions satisfy the following
basic properties which are essential in the theory of elliptic
hypergeometric series:
\begin{subequations}
\begin{gather}\label{tif}
\ta(x;p) =-x \ta(1/x;p),\\
\label{p1id}
\ta(px;p) =-\frac 1x \ta(x;p),
\end{gather}
and the {\em addition formula}
\begin{gather}\label{addf}
\ta(xy,x/y,uv,u/v;p)-\ta(xv,x/v,uy,u/y;p)
=\frac uy\,\ta(yv,y/v,xu,x/u;p)
\end{gather}
\end{subequations}
(cf.\ \cite[p.~451, Example~5]{WW}).

Note that in the theta function $\theta(x;p)$ we cannot let~$x\to 0$
(unless we f\/irst let $p\to 0$) for~$x$ is a pole of inf\/inite order.

Further, we def\/ine the {\em theta shifted factorial}
(or {\em $q,p$-shifted factorial}) by
\begin{gather*}
(a;q,p)_n = \begin{cases}
\displaystyle \prod^{n-1}_{k=0} \theta \big(aq^k;p\big),&\quad n = 1, 2, \ldots,\\
1,&\quad n = 0,\\
\displaystyle 1/\prod^{-n-1}_{k=0} \theta \big(aq^{n+k};p\big),&\quad n = -1, -2, \ldots,
\end{cases}
\end{gather*}
together with
\begin{gather*}
(a_1, a_2, \ldots, a_m;q, p)_n = \prod^m_{k=1} (a_k;q,p)_n,
\end{gather*}
for compact notation.
For $p=0$ we have $\theta (x;0) = 1-x$ and, hence, $(a;q, 0)_n = (a;q)_n$
is a~{\em $q$-shifted factorial} in base $q$.
The parameters $q$ and $p$
in $(a;q,p)_n$ are called the {\em base} and {\em nome}, respectively.
Observe that
\begin{gather*}
(pa;q,p)_n=(-1)^na^{-n}q^{-\binom n2} (a;q,p)_n,
\end{gather*}
which follows from~\eqref{p1id}.
A list of other useful identities for manipulating the
$q,p$-shifted factorials is given in~\cite[Section~11.2]{GR}.

A series $\sum c_n$ is called an {\it elliptic hypergeometric series} if
$g(n) = c_{n+1}/c_n$ is an elliptic function of $n$ with $n$
considered as a complex variable, i.e., the function~$g(x)$
is a~doubly periodic meromorphic function of the complex variable~$x$.
Without loss of generality, by the theory of theta functions,
one may assume that
\begin{gather*}
g(x)=\frac{\ta\big(a_1q^x,a_2q^x,\dots,a_{s+1}q^x;p\big)}
{\ta\big(q^{1+x},b_1q^x,\dots,b_sq^x;p\big)} z,
\end{gather*}
where the {\em elliptic balancing condition}, namely
\begin{gather*}
a_1a_2\cdots a_{s+1}=qb_1b_2\cdots b_s,
\end{gather*}
holds.
If we write $q=e^{2\pi i\sigma}$, $p=e^{2\pi i\tau}$,
with complex $\sigma$, $\tau$, then $g(x)$ is indeed periodic in~$x$
with periods $\sigma^{-1}$ and $\tau\sigma^{-1}$.

For convergence reasons, one usually requires $a_{s+1}=q^{-n}$
($n$ being a nonnegative integer),
so that the sum of an elliptic hypergeometric series
is in fact f\/inite.

{\em Very-well-poised elliptic hypergeometric series} are def\/ined as
\begin{gather*}
{}_{s+1}V_s(a_1;a_6,\dots,a_{s+1};q,p):=
\sum_{k=0}^{\infty}\frac{\ta\big(a_1q^{2k};p\big)}{\ta(a_1;p)}
\frac{(a_1,a_6,\dots,a_{s+1};q,p)_k}
{(q,a_1q/a_6,\dots,a_1q/a_{s+1};q,p)_k}(qz)^k,
\end{gather*}
where
\begin{gather*}
q^2a_6^2a_7^2\cdots a_{s+1}^2=(a_1q)^{s-5}.
\end{gather*}
Note that in the elliptic case the number of
pairs of numerator and denominator parameters
involved in the construction of the {\em very-well-poised term}
$\theta(a_1q^{2k};p)/\theta(a_1;p)$
is {\em four}
(whereas in the basic case this number is {\em two},
in the ordinary case only {\em one}). See Spiridonov~\cite{Sp} or
Gasper and Rahman~\cite[Chapter~11]{GR} for details.

In their study of elliptic $6j$ symbols (which are elliptic solutions
of the Yang--Baxter equation found by Baxter~\cite{B} and Date et
al.~\cite{DJKMO}), Frenkel and Turaev~\cite{FT}
discovered the following $_{10}V_9$ summation formula
(as a result of a more general
${}_{12}V_{11}$ transformation, being a consequence
of the tetrahedral symmetry of the elliptic $6j$ symbols):
\begin{gather}\label{10V9}
{}_{10}V_9\big(a;b,c,d,e,q^{-n};q,p\big)
=\frac {(aq,aq/bc,aq/bd,aq/cd;q,p)_n}
{(aq/b,aq/c,aq/d,aq/bcd;q,p)_n},
\end{gather}
where $a^2q^{n+1}=bcde$.
The $_{10}V_9$ summation is an elliptic analogue of Jackson's
$_8\phi_7$ summation formula (cf.\ \cite[equation~(2.6.2)]{GR})
\begin{gather}\label{8phi7}
 \sum_{k=0}^n\frac{(1-aq^{2k})(a,b,c,d,e,q^{-n};q)_k}
{(1-a)(q,aq/b,aq/c,aq/d,aq/e,aq^{n+1};q)_k}q^k
=\frac {(aq,aq/bc,aq/bd,aq/cd;q)_n}
{(aq/b,aq/c,aq/d,aq/bcd;q)_n},
\end{gather}
where $a^2q^{n+1}=bcde$, which in turn is a $q$-analogue
of Dougall's $_7F_6$ summation formula.

\subsection{The Askey--Wilson operator}

The Askey--Wilson operator $\mathcal D_q$ was f\/irst def\/ined in~\cite{AW}.
We consider meromorphic functions~$f(z)$ symmetric in~$z$
and~$1/z$. Writing $z=e^{i\theta}$ (note that $\theta$ need not to be real),
we may consider~$f$ to be a function in $x=\cos\theta=(z+1/z)/2$ and write
$f[x]:=f(z)$. (I.e., $f$~can be considered as a function in $z$, or
equivalently, as a function in~$x$, where the two dif\/ferent notations
specify the dependency to be considered.)

The {\em Askey--Wilson operator} acts on functions of
$x=\cos\theta$. It is def\/ined as follows:
\begin{gather}\label{awop}
\mathcal D_q f[x]=\frac{f\big(q^{\frac 12}z\big)-f\big(q^{-\frac 12}z\big)}
{\iota\big(q^{\frac 12}z\big)-\iota\big(q^{-\frac 12}z\big)},
\end{gather}
where $\iota[x]=x$ (i.e., $\iota(z)=(z+1/z)/2$).
Equation~\eqref{awop} can also be written as
\begin{gather*}
\mathcal D_q f[x]=\frac{f\big(q^{\frac 12}z\big)-f\big(q^{-\frac 12}z\big)}
{i\big(q^{\frac 12}-q^{-\frac 12}\big)\sin\theta}.
\end{gather*}
The operator $\mathcal D_q$ is a
$q$-analogue of the dif\/ferentiation operator
(which is dif\/ferent to Jackson's $q$-dif\/ference operator).
In particular, since
\begin{gather*}
\mathcal D_q T_n[x]=
\frac{q^{\frac n2}-q^{-\frac n2}}
{q^{\frac 12}-q^{-\frac 12}}U_{n-1}[x],
\end{gather*}
where $T_n[\cos\theta]=\cos n\theta$ and
$U_n[\cos\theta]=\sin(n+1)\theta/\sin\theta$ are the Chebyshev polynomials
of the f\/irst and second kind, one easily sees that $\mathcal D_q$ maps
polynomials to polynomials, lowering the degree by one.

In the calculus of the Askey--Wilson operator the so-called
``Askey--Wilson monomials''\break $\phi_n(x;a)=(az,a/z;q)_n$
form a natural basis for polynomials or power series in $x$.
One readily computes
\begin{gather*}
\mathcal D_q (az,a/z;q)_n=-\frac{2a (1-q^n)}{(1-q)}
\big(aq^{\frac 12}z,aq^{\frac 12}/z;q\big)_{n-1}.
\end{gather*}
Ismail~\cite{Is1} proved the following Taylor theorem for polynomials $f[x]$.
\begin{Theorem}\label{thism}
If $f[x]$ is a polynomial in $x$ of degree~$n$, then
\begin{gather*}
f[x]=\sum_{k=0}^nf_k\phi_k(x;a),
\end{gather*}
where
\begin{gather*}
f_k=\frac{(q-1)^k}{(2a)^k(q;q)_k}q^{-k(k-1)/4}
\big[\mathcal D_q^k f[x]\big]_{x=x_k}, \qquad
x_k:=\frac 12\big(aq^{\frac k2}+q^{-\frac k2}/a\big).
\end{gather*}
\end{Theorem}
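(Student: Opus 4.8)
The plan is to use that $\phi_k(x;a)=(az,a/z;q)_k$ is a polynomial in $x$ of degree exactly $k$: expanding each factor as $(1-aq^jz)(1-aq^j/z)=1-2aq^jx+a^2q^{2j}$ shows this, with leading coefficient $(-2a)^kq^{k(k-1)/2}\ne0$. Hence $\{\phi_0(x;a),\dots,\phi_n(x;a)\}$ is a basis of the space of polynomials in $x$ of degree at most $n$, so there are unique scalars $f_0,\dots,f_n$ with $f[x]=\sum_{k=0}^nf_k\phi_k(x;a)$, and the whole statement reduces to computing these coefficients. I would do this by iterating the lowering relation
\begin{gather*}
\mathcal D_q\phi_k(x;a)=-\frac{2a(1-q^k)}{1-q}\,\phi_{k-1}\big(x;aq^{1/2}\big)
\end{gather*}
recorded just before the theorem.

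First I would prove by induction on $m$ that
\begin{gather*}
\mathcal D_q^m\phi_k(x;a)=\left(\frac{-2a}{1-q}\right)^m q^{m(m-1)/4}\,\frac{(q;q)_k}{(q;q)_{k-m}}\,\phi_{k-m}\big(x;aq^{m/2}\big),\qquad k\ge m,
\end{gather*}
whereas $\mathcal D_q^m\phi_k(x;a)=0$ for $k<m$, because $\mathcal D_q$ lowers degrees by one. Here the base parameter telescopes, $a\mapsto aq^{1/2}\mapsto aq\mapsto\dots\mapsto aq^{m/2}$, the powers of $q^{1/2}$ picked up at the successive steps contributing $\frac12(0+1+\dots+(m-1))=m(m-1)/4$ to the exponent, and the product $(1-q^k)(1-q^{k-1})\cdots(1-q^{k-m+1})$ collapsing to $(q;q)_k/(q;q)_{k-m}$. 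Applying $\mathcal D_q^m$ termwise to the expansion of $f$ then gives
\begin{gather*}
\mathcal D_q^m f[x]=\left(\frac{-2a}{1-q}\right)^m q^{m(m-1)/4}\sum_{k=m}^n f_k\,\frac{(q;q)_k}{(q;q)_{k-m}}\,\phi_{k-m}\big(x;aq^{m/2}\big).
\end{gather*}

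The decisive step is the specialization $x=x_m$, which corresponds to $z=aq^{m/2}$ (equivalently $1/z=q^{-m/2}/a$). For this value $\phi_{k-m}(x_m;aq^{m/2})=\big(a^2q^m,1;q\big)_{k-m}$, and since $(1;q)_j=0$ for $j\ge1$ while $(1;q)_0=1$, every summand with $k>m$ drops out and only the $k=m$ term survives, contributing $f_m\,(q;q)_m$. Solving for $f_m$ and rewriting $\left(-2a/(1-q)\right)^m=(2a)^m/(q-1)^m$ yields precisely
\begin{gather*}
f_m=\frac{(q-1)^m}{(2a)^m(q;q)_m}q^{-m(m-1)/4}\big[\mathcal D_q^m f[x]\big]_{x=x_m}.
\end{gather*}
I expect the only real obstacle to be the bookkeeping in the inductive step, namely correctly tracking the powers of $q^{1/2}$ and the successive shifts of the base parameter; once the iterated formula is in hand, the collapse of the sum at $x=x_m$ is immediate from $(1;q)_j=0$, and this is exactly what turns the triangular linear system for the $f_k$ into the closed-form evaluations claimed.
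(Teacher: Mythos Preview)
Your proof is correct. The paper itself does not prove this theorem (it is quoted from Ismail~\cite{Is1}), but your argument---iterate the lowering relation to get an explicit formula for $\mathcal D_q^m\phi_k(x;a)$, then evaluate at $z=aq^{m/2}$ so that $(1;q)_{k-m}$ kills all but the $k=m$ term---is precisely the method the paper uses for its elliptic and quadratic generalizations (Theorems~\ref{thm:Taylor2} and~\ref{thm:multvTaylor}), so you are in complete agreement with the paper's approach.
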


As it was shown in \cite{Is1},
the application of Theorem~\ref{thism} to $f(z)=(bz,b/z;q)_n$
immediately gives the $q$-Pfaf\/f--Saalsch\"utz summation
(cf.\ \cite[equation~(1.7.2)]{GR}), in the form
\begin{gather*}
\frac{(bz,b/z;q)_n}{(ba,b/a;q)_n}=
{}_3\phi_2\!\left[\begin{matrix}az,a/z,q^{-n}\\
ab,q^{1-n}a/b\end{matrix};q,q\right],
\end{gather*}
and its application to
the Askey--Wilson polynomials,
\begin{gather*}
\omega_n(x;a,b,c,d;q):=
{}_4\phi_3\!\left[\begin{matrix}az,a/z,abcdq^{n-1},q^{-n}\\
ab,ac,ad\end{matrix};q,q\right],
\end{gather*}
gives a connection coef\/f\/icient identity which,
by specialization, can be reduced to the Sears transformation
(cf.\ \cite[equation~(3.2.1)]{GR}), in the form
\begin{gather*}
\omega_n(x;a,b,c,d;q)=
\frac{a^n(bc,bd;q)_n}{b^n(ac,ad;q)_n}\,
\omega_n(x;b,a,c,d;q).
\end{gather*}

Ismail and Stanton~\cite{IS} extended the above polynomial Taylor theorem
to hold for entire functions of exponential growth,
resulting in inf\/inite Taylor expansions.
Marco and Parcet~\cite{MP} extended this yet further to hold for
arbitrary $q$-dif\/ferentiable functions, resulting in
inf\/inite Taylor expansions with explicit remainder term.
Among other results they were able to
recover the nonterminating $q$-Pfaf\/f--Saalsch\"utz summation
(cf.\ \cite[Appendix~(II.24)]{GR}).

\subsection{The well-poised and elliptic Askey--Wilson operator}
Since
\begin{gather*}
\mathcal D_q \frac{(az,a/z;q)_n}{(cz,c/z;q)_n}
=\frac 2{\big(q^{\frac 12}-q^{-\frac 12}\big)(z-1/z)}
\left[\frac{\big(aq^{\frac 12}z,aq^{-\frac 12}/z;q\big)_n}
{\big(cq^{\frac 12}z,cq^{-\frac 12}/z;q\big)_n}-
\frac{\big(aq^{-\frac 12}z,aq^{\frac 12}/z;q\big)_n}
{\big(cq^{-\frac 12}z,cq^{\frac 12}/z;q\big)_n}\right]\\
\qquad{}
=
\frac 2{\big(q^{\frac 12}-q^{-\frac 12}\big)(z-1/z)}
\frac{\big(aq^{\frac 12}z,aq^{\frac 12}/z;q\big)_{n-1}}
{\big(cq^{\frac 12}z,cq^{\frac 12}/z;q\big)_{n-1}}\\
\qquad\quad{}
\times
\left[\frac{\big(1-azq^{n-\frac 12}\big)\big(1-aq^{-\frac 12}/z\big)}
{\big(1-czq^{n-\frac 12}\big)\big(1-cq^{-\frac 12}/z\big)}-
\frac{\big(1-azq^{-\frac 12}\big)\big(1-aq^{n-\frac 12}/z\big)}
{\big(1-czq^{-\frac 12}\big)\big(1-cq^{n-\frac 12}/z\big)}
\right]\\
\qquad{}
=
\frac{(-1)2a(1-c/a)\big(1-acq^{n-1}\big)(1-q^n)}
{\big(1-czq^{-\frac 12}\big)\big(1-czq^{\frac 12}\big)
\big(1-cq^{-\frac 12}/z\big)\big(1-cq^{\frac 12}/z\big)(1-q)}
\frac{\big(aq^{\frac 12}z,aq^{\frac 12}/z;q\big)_{n-1}}
{\big(cq^{\frac 32}z,cq^{\frac 32}/z;q\big)_{n-1}},
\end{gather*}
we were led in \cite{Schl} to def\/ine a
$c$-generalized well-poised Askey--Wilson operator
acting on~$x$ (or~$z$) by
\begin{gather*}
\mathcal D_{c,q}=\big(1-czq^{-\frac 12}\big)\big(1-czq^{\frac 12}\big)
\big(1-cq^{-\frac 12}/z\big)\big(1-cq^{\frac 12}/z\big) \mathcal D_q,
\end{gather*}
which acts ``degree-lowering'' on the ``rational monomials''
(or ``well-poised monomials'')
\begin{gather*}
\frac{(az,a/z;q)_n}{(cz,c/z;q)_n}
\end{gather*}
in the form
\begin{gather*}
\mathcal D_{c,q} \frac{(az,a/z;q)_n}{(cz,c/z;q)_n}=
\frac{(-1)2a(1-c/a)\big(1-acq^{n-1}\big)(1-q^n)}{(1-q)}
\frac{\big(aq^{\frac 12}z,aq^{\frac 12}/z;q\big)_{n-1}}
{\big(cq^{\frac 32}z,cq^{\frac 32}/z;q\big)_{n-1}}.
\end{gather*}
Clearly, $\mathcal D_{0,q}=\mathcal D_{q}$.

More generally, for parameters $c$, $q$, $p$ with $|q|,|p|<1$, we def\/ined an
elliptic extension of the Askey--Wilson operator, acting on
functions symmetric in $z^{\pm1}$, by
\begin{gather}\label{defdf}
\mathcal D_{c,q,p} f(z)
=2q^{\frac 12}z \frac{\theta\big(czq^{-\frac 12},czq^{\frac 12},
cq^{-\frac 12}/z,cq^{\frac 12}/z;p\big)}{\theta(q,z^2;p)}
\big(f\big(q^{\frac 12}z\big)-f\big(q^{-\frac 12}z\big)\big).
\end{gather}
Note that $\mathcal D_{c,q,0}=\mathcal D_{c,q}$.

In particular, using \eqref{addf}, we have
\begin{gather}\label{degl}
\mathcal D_{c,q,p} \frac{(az,a/z;q,p)_n}{(cz,c/z;q,p)_n}=
\frac{(-1)2a \theta\big(c/a,acq^{n-1},q^n;p\big)}{\theta(q;p)}
\frac{\big(aq^{\frac 12}z,aq^{\frac 12}/z;q,p\big)_{n-1}}
{\big(cq^{\frac 32}z,cq^{\frac 32}/z;q,p\big)_{n-1}}.
\end{gather}

\begin{Remark}
The operator $\mathcal D_{c,q,p}$ happens to be a special case
of a multivariable dif\/ference operator introduced by Rains in~\cite{Ra1}.
Already in the single variable case Rains' operator
involves two more parameters than $\mathcal D_{c,q,p}$.
(Rains' dif\/ference operators generate a representation of
the Sklyanin algebra, as observed in~\cite{Ra1} and made
explicit in~\cite{R1} and \cite[Section~6]{R2}.)
Rains' operator can be specialized to act as degree-lowering
(as the above $\mathcal D_{c,q,p}$ does), degree-preserving
or degree-raising on abelian functions.
Rains used his multivariable dif\/ference operators in \cite{Ra1}
to construct $BC_n$-symmetric biorthogonal abelian functions
which generalize Koornwinder's orthogonal polynomials.
He further used his operator in~\cite{Ra2} to derive $BC_n$-symmetric
extensions of Frenkel and Turaev's~${}_{10}V_9$ summation
and~${}_{12}V_{11}$ transformation.
\end{Remark}

\section{Elliptic Taylor expansions and interpolation}
We work in the following space of abelian functions.

For a complex number $c$, let
\begin{gather*}
W_c ^{n} := \text{span}_\C\left\{\frac{g_k (z)}{(cz, c/z;q,p)_k},\,
0\le k\le n \right\},
\end{gather*}
where $g_k(z)$ runs over all functions being holomorphic for
$z\ne 0$ with $g_k (z)=g_k(1/z)$ and
\begin{gather*}
g_k(pz)=\frac{1}{p^k z^{2k}}g_k (z).
\end{gather*}

In classical terminology, $g_k(z)$ is an even theta function
of order $2k$ and zero characteristic. Rains~\cite{Ra2}
refers to such functions as $BC_1$ theta functions of degree $k$,
whereas in Rosengren and Schlosser~\cite{RS} they are referred to
as $D_k$ theta functions.
It is well-known that the space~$V^k$ of even theta functions
of order~$2k$ and zero characteristic has dimension~$k+1$
(see, e.g., Weber~\cite[p.~49]{W}).

Note that $W_c^n$ consists of certain abelian functions.
(For $p\to 0$ these degenerate to certain rational functions
which we may call ``well-poised''.)
\begin{Lemma}[\protect{\cite[Lemma~4.1]{Schl}}]\label{Lem}
For any arbitrary but fixed complex number $a$
$($satisfying $a\neq c q^jp^k$, for $j=0,\dots,n-1$,
and $k\in\mathbb Z$, and $a\neq q^jp^k/c$,
for $j=2-2n,\dots,1-n$, and $k\in\mathbb Z)$, the set
\begin{gather*}
\left\{\frac{(az,a/z;q,p)_k}
{(cz,c/z;q,p)_k},\, 0\le k\le n\right\}
\end{gather*}
forms a basis for $W_c^n$.
\end{Lemma}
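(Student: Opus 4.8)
The plan is to split the statement into two parts: (i) $\dim_{\C}W_c^n=n+1$, and (ii) linear independence of the $n+1$ displayed functions; then (i)+(ii)+(the $n+1$ functions lying in $W_c^n$) gives the claim. For (i) I would identify $W_c^n$ with the space $V^n$ of even theta functions of order $2n$ and zero characteristic (which, as recalled above, has dimension $n+1$) via multiplication by the fixed function $(cz,c/z;q,p)_n$. Since $\theta(bz,b/z;p)\in V^1$, and the product of theta functions of zero characteristic of orders $2j$ and $2l$ has order $2(j+l)$ and zero characteristic, one gets $(az,a/z;q,p)_k=\prod_{i=0}^{k-1}\theta(aq^iz,aq^i/z;p)\in V^k$ and likewise $(czq^k,cq^k/z;q,p)_{n-k}\in V^{n-k}$; and if $g_k$ is even, holomorphic for $z\ne0$, and satisfies the quasi-periodicity in the definition of $W_c^n$, then $g_k(z)(czq^k,cq^k/z;q,p)_{n-k}\in V^n$ (the quasi-periodicity of the theta shifted factorial being supplied by \eqref{p1id}). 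Hence, using $(cz,c/z;q,p)_n=(cz,c/z;q,p)_k\,(czq^k,cq^k/z;q,p)_{n-k}$,
\begin{gather*}
(cz,c/z;q,p)_n\cdot\frac{g_k(z)}{(cz,c/z;q,p)_k}=g_k(z)\,(czq^k,cq^k/z;q,p)_{n-k}\in V^n,
\end{gather*}
so multiplication by $(cz,c/z;q,p)_n$ carries $W_c^n$ into $V^n$; conversely $g\mapsto g/(cz,c/z;q,p)_n$ carries $V^n$ into $W_c^n$ (the case $k=n$), and since $(cz,c/z;q,p)_n\not\equiv0$ these maps are mutually inverse linear isomorphisms. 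Thus $\dim_{\C}W_c^n=n+1$, and the same computation shows each $(az,a/z;q,p)_k/(cz,c/z;q,p)_k$, $0\le k\le n$, lies in $W_c^n$. It therefore remains only to prove these $n+1$ functions are linearly independent.

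For (ii) I would suppose $\sum_{k=0}^n\lambda_k(az,a/z;q,p)_k/(cz,c/z;q,p)_k\equiv0$, clear denominators to obtain the identity
\begin{gather*}
\sum_{k=0}^n\lambda_k\,(az,a/z;q,p)_k\,(czq^k,cq^k/z;q,p)_{n-k}\equiv0
\end{gather*}
between functions holomorphic for $z\ne0$, and evaluate successively at $z=cq^{n-1},cq^{n-2},\dots,cq^0$. For $k\le m$ the factor $(czq^k,cq^k/z;q,p)_{n-k}$ contains the factor $\theta(cq^{k+i}/z;p)$ with $i=m-k$, equal to $\theta(cq^m/z;p)$, which vanishes at $z=cq^m$; hence all terms with $k\le m$ drop out and the system is triangular. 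At $z=cq^{n-1}$ only $k=n$ survives, with coefficient $\prod_{i=0}^{n-1}\theta(acq^{n-1+i},aq^{i-n+1}/c;p)$, which is nonzero exactly by the hypotheses $a\ne cq^jp^k$ ($0\le j\le n-1$) and $a\ne q^jp^k/c$ ($2-2n\le j\le1-n$); so $\lambda_n=0$. Descending through $m=n-2,\dots,1$ one peels off $\lambda_{n-1},\dots,\lambda_1$ in turn, after which the original (uncleared) relation collapses to $\lambda_0\cdot1\equiv0$, giving $\lambda_0=0$.

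The step demanding the most care is verifying that every successive diagonal coefficient — a product of theta values $\theta(acq^{\bullet},aq^{\bullet}/c,c^2q^{\bullet},q^{\bullet};p)$ coming from $(az,a/z;q,p)_k\big|_{z=cq^m}$ and $(czq^k,cq^k/z;q,p)_{n-k}\big|_{z=cq^m}$ — is nonvanishing over the ranges of indices that actually occur; this is precisely where the listed exclusions for $a$ (and the tacit genericity of $c$ and $q$) are used, and it requires honest bookkeeping. An alternative that avoids the explicit dimension count is an induction on $n$ with the elliptic operator $\mathcal D_{c,q,p}$: by \eqref{degl} it sends each $(az,a/z;q,p)_k/(cz,c/z;q,p)_k$ to a nonzero multiple of $(aq^{1/2}z,aq^{1/2}/z;q,p)_{k-1}/(cq^{3/2}z,cq^{3/2}/z;q,p)_{k-1}$ and annihilates the $k=0$ term, and since its only kernel within these abelian functions is the constants, both spanning and independence propagate from the inductive hypothesis — at the cost of the same genericity bookkeeping, now for the shifted parameters $aq^{1/2}$, $cq^{3/2}$.
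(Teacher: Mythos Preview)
The paper does not prove this lemma: it is quoted from \cite[Lemma~4.1]{Schl} and stated here without argument, so there is no in-paper proof to compare against. Your plan---identify $W_c^n$ with the $(n{+}1)$-dimensional space $V^n$ via multiplication by the fixed nonzero function $(cz,c/z;q,p)_n$, and then establish linear independence of the $n{+}1$ candidates by clearing denominators and evaluating at a triangular sequence of points---is the standard route and is in spirit what the cited reference does.

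One genuine bookkeeping issue you flag but do not resolve: the diagonal coefficients at the intermediate evaluations $z=cq^m$ with $m<n-1$ pick up theta factors such as $\theta(acq^{m+i};p)$ with $0\le m+i<n-1$, as well as $\theta(c^2q^\bullet;p)$ and $\theta(q^\bullet;p)$ from the $(czq^{m+1},cq^{m+1}/z;q,p)_{n-m-1}$ piece, whose nonvanishing is not guaranteed by the stated exclusions on $a$ alone. Those exclusions are exactly the conditions $(c/a;q,p)_n\ne0$ and $(acq^{n-1};q,p)_n\ne0$, which control only the top step $m=n-1$ of your scheme. So the triangular argument as written requires tacit genericity of $c$ and $q$ (which you acknowledge) and, strictly, a few more excluded values of $a$ than the lemma lists. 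Your operator-induction alternative has the same feature: for the constants $\theta(c/a,acq^{k-1},q^k;p)$ in \eqref{degl} to be nonzero for $k=1,\dots,n$ one needs $ac\ne q^{1-k}p^\ell$, i.e.\ $a\ne q^jp^\ell/c$ for $j=1-n,\dots,0$, which again is not the range excluded in the hypothesis. None of this undermines your strategy, but the precise matching of hypotheses to argument should be checked against the original source \cite{Schl} rather than the present paper, which simply imports the statement.
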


Note that, in view of \eqref{degl},
the elliptic Askey--Wilson operator maps functions in $W_c^n$ to
functions in $W_{cq^{\frac 32}}^{n-1}$.

We now def\/ine
\begin{gather*}
\mathcal D_{c,q,p}^{(k)}=\mathcal D_{cq^{\frac 32},q,p}^{(k-1)}
\mathcal D_{c,q,p},
\end{gather*}
with $\mathcal D_{c,q,p}^{(0)}=\varepsilon$, the identity operator.
We have the following elliptic Taylor expansion theorem which extends
Theorem~\ref{thism} of Ismail.
\begin{Theorem}[\protect{\cite[Theorem 4.2]{Schl}}]\label{thm:Taylor}
If $f$ is in $W_c^n$, then
\begin{gather*}
f(z)=\sum_{k=0}^nf_k \frac{(az,a/z;q,p)_k}
{(cz,c/z;q,p)_k},
\end{gather*}
where
\begin{gather*}
f_k=\frac{(-1)^kq^{-k(k-1)/4} \ta(q;p)^k}{(2a)^k(q,c/a,acq^{k-1};q,p)_k}
\big[\mathcal D_{c,q,p}^{(k)} f(z)\big]_{z=aq^{\frac k2}}.
\end{gather*}
\end{Theorem}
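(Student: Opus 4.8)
The plan is to argue exactly as in the classical and $q$-cases, exploiting the degree-lowering action recorded in \eqref{degl} together with the basis property from Lemma~\ref{Lem}. First I would note that, by Lemma~\ref{Lem}, under the stated genericity hypotheses on $a$ the functions $\phi_k(z):=(az,a/z;q,p)_k/(cz,c/z;q,p)_k$, $0\le k\le n$, form a basis of $W_c^n$; hence any $f\in W_c^n$ has a unique expansion $f(z)=\sum_{k=0}^n f_k\,\phi_k(z)$. It remains only to identify the coefficients $f_k$. The key structural fact is that, by the remark following \eqref{degl}, $\mathcal D_{c,q,p}$ sends $W_c^n$ into $W_{cq^{3/2}}^{n-1}$, and more precisely, iterating \eqref{degl} along the chain $c\to cq^{3/2}\to cq^3\to\cdots$,
\begin{gather*}
\mathcal D_{c,q,p}^{(k)}\phi_m(z)=
\begin{cases}
0,& m<k,\\[1mm]
\displaystyle
\left(\frac{(-1)2a}{\ta(q;p)}\right)^{\!-k}\!\!\!\cdot(\text{nonzero constant }\gamma_k)\cdot
\dfrac{\big(aq^{k/2}z,aq^{k/2}/z;q,p\big)_{m-k}}{\big(cq^{(3/2)k}z,cq^{(3/2)k}/z;q,p\big)_{m-k}},& m\ge k,
\end{cases}
\end{gather*}
where the constant $\gamma_k$ telescopes into a product of theta values. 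Concretely, applying \eqref{degl} once lowers the upper index on the $a$-chain and the $c$-chain by one while producing the factor $(-1)2a\,\ta(c/a,acq^{m-1},q^m;p)/\ta(q;p)$ for the top factor of each Pochhammer; doing this $k$ times and being careful about how the parameters $a,c$ shift at each stage yields, after telescoping, the prefactor $(-1)^k 2^k a^k (q,c/a,acq^{k-1};q,p)_k / \ta(q;p)^k$ when $m=k$.

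Second, I would evaluate. Because $\mathcal D_{c,q,p}^{(k)}$ annihilates $\phi_0,\dots,\phi_{k-1}$, applying it to the expansion $f=\sum_m f_m\phi_m$ gives
\begin{gather*}
\mathcal D_{c,q,p}^{(k)}f(z)=\sum_{m\ge k} f_m\,\mathcal D_{c,q,p}^{(k)}\phi_m(z).
\end{gather*}
Now specialize $z=aq^{k/2}$. For $m>k$ the surviving factor $\big(aq^{k/2}z,aq^{k/2}/z;q,p\big)_{m-k}$ contains the factor $\theta\big(aq^{k/2}\cdot aq^{-k/2};p\big)=\theta(a^2\cdot? ;p)$ — more precisely, evaluating $\big(aq^{k/2}/z;q,p\big)_{m-k}$ at $z=aq^{k/2}$ produces $\theta(1;p)=0$; hence every term with $m>k$ vanishes at $z=aq^{k/2}$, and only the $m=k$ term survives, giving $\big[\mathcal D_{c,q,p}^{(k)}f(z)\big]_{z=aq^{k/2}} = f_k\cdot(\text{the }m=k\text{ constant above})$. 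Solving for $f_k$ and inserting the telescoped constant — and the power of $q$ coming from the half-integer shifts $q^{k/2}$ in the successive $a$-parameters, which accumulates to $q^{-k(k-1)/4}$ exactly as the exponent $-k(k-1)/4$ in Theorem~\ref{thism} — yields the claimed formula
\begin{gather*}
f_k=\frac{(-1)^kq^{-k(k-1)/4}\,\ta(q;p)^k}{(2a)^k(q,c/a,acq^{k-1};q,p)_k}\big[\mathcal D_{c,q,p}^{(k)}f(z)\big]_{z=aq^{k/2}}.
\end{gather*}

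I expect the main obstacle to be the bookkeeping in the telescoping step: tracking how the two independent parameter chains ($a$ shifting by $q^{1/2}$ per application, $c$ shifting by $q^{3/2}$ per application) interact with the theta-value prefactors $\ta(c/a,acq^{m-1},q^m;p)$ at each of the $k$ stages, and checking that the net product is precisely $(q,c/a,acq^{k-1};q,p)_k$ and that the accumulated power of $q$ is $q^{-k(k-1)/4}$. A secondary point requiring care is the genericity in $a$ assumed in Lemma~\ref{Lem}: one must verify that these conditions also guarantee that none of the denominators $\big(cq^{(3/2)j}z,\dots;q,p\big)$ appearing along the chain vanish at the evaluation points $z=aq^{j/2}$, and that the constants $\gamma_k$ are nonzero, so that the division solving for $f_k$ is legitimate. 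Once these are in hand the argument is a direct induction on $k$ (or equivalently on $n$), mirroring Ismail's proof of Theorem~\ref{thism} and its $q$-to-elliptic lift already carried out for the degree-lowering relation \eqref{degl}.
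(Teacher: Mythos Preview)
Your approach is correct and is precisely the standard argument: use Lemma~\ref{Lem} for existence of the expansion, iterate the degree-lowering relation~\eqref{degl} to see that $\mathcal D_{c,q,p}^{(k)}$ kills $\phi_0,\dots,\phi_{k-1}$, then evaluate at $z=aq^{k/2}$ so that the factor $\theta(1;p)=0$ in $(aq^{k/2}/z;q,p)_{m-k}$ kills all terms with $m>k$, leaving only the $m=k$ contribution. The paper does not reprove this theorem here (it is quoted from~\cite{Schl}), but the very same mechanism is carried out explicitly in the proof of the multivariate analogue, Theorem~\ref{thm:multvTaylor}, and of the quadratic analogue, Theorem~\ref{thm:Taylor2}; your telescoping of the prefactors to $(-1)^k(2a)^kq^{k(k-1)/4}(q,c/a,acq^{k-1};q,p)_k/\theta(q;p)^k$ agrees with the computation displayed there. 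One small slip: in your displayed case distinction the exponent on $\big((-1)2a/\theta(q;p)\big)$ should be $+k$, not $-k$; the subsequent formulas are consistent with the correct sign.
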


\begin{Example}
Let
\begin{gather*}
f(z)=\frac{(bz,b/z;q,p)_n}
{(cz,c/z;q,p)_n}.
\end{gather*}
Application of Theorem~\ref{thm:Taylor} in conjunction with \eqref{degl} gives
\begin{gather*}
f_k =\frac{(-1)^kq^{-k(k-1)/4}
\ta(q;p)^k}{(2a)^k(q,c/a,acq^{k-1};q,p)_k}\\
\hphantom{f_k =}{}\times
(-1)^k(2b)^kq^{k(k-1)/4}
\frac{(q;q,p)_n\big(c/b,bcq^{n-1};q,p\big)_k}{(q;q,p)_{n-k} \ta(q;p)^k}
\frac{\big(abq^k,b/a;q,p\big)_{n-k}}{\big(acq^{2k},cq^k/a;q,p\big)_{n-k}}\\
\hphantom{f_k}{}=
\frac{(ab,b/a;q,p)_n}{(ac,c/a;q,p)_n}\frac{\ta\big(acq^{2k-1};p\big)}{\ta\big(acq^{-1};p\big)}
\frac{\big(acq^{-1},c/b,bcq^{n-1},q^{-n};q,p\big)_k}{\big(q,ab,aq^{1-n}/b,acq^n;q,p\big)_k}q^k,
\end{gather*}
thus yielding Frenkel and Turaev's ${}_{10}V_9$ summation \eqref{10V9},
in the form
\begin{gather*}
\frac{(ac,c/a,bz,b/z;q,p)_n}
{(ab,b/a,cz,c/z;q,p)_n}={}_{10}V_9\big(acq^{-1};az,a/z,c/b,bcq^{n-1},q^{-n};q,p\big).
\end{gather*}
\end{Example}

We now prove an elliptic extension
of a theorem of S.~Cooper~\cite{Cooper} which explicitly
describes the action of the $m$-iterated Askey--Wilson operator.

\begin{Theorem}\label{thm:Dn}
The action of $\mathcal{D}_{c,q,p}^{(m)}$ on a function $f\in W_c^n$
is given by
\begin{gather}\label{eqn:Dn}
\mathcal{D}_{c,q,p}^{(m)}f(z) = (-2z)^mq^{\frac{m(3-m)}{4}}
\frac{\big(c q^{\frac{m}{2}-1}z, c q^{\frac{m}{2}-1}/z;q,p\big)_{m+1}}
{(\theta(q;p))^m} \\
\hphantom{\mathcal{D}_{c,q,p}^{(m)}f(z) = }{}
\times\sum_{k=0}^{m} q^{k(m-k)}
\begin{bmatrix}m\\k\end{bmatrix}_{p,q}
\frac{z^{2(k-m)}\big(c q^{\frac{m}{2}-k}z, cq^{-\frac{m}{2}+k}/z;q,p\big)_{m-1}}
{\big(q^{m-2k+1}z^2 ;q,p\big)_k
\big(q^{2k-m+1}z^{-2};q,p\big)_{m-k}} f\big(q^{\frac{m}{2}-k}z\big),\nonumber
\end{gather}
where
\begin{gather*}
\begin{bmatrix}m\\k\end{bmatrix}_{p,q}
=\frac{\big(q^{1+k};q,p\big)_{m-k}}{(q;q,p)_{m-k}}.
\end{gather*}
\end{Theorem}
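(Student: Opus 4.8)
The plan is to prove \eqref{eqn:Dn} by induction on $m$. For $m=0$ the sum on the right reduces to its single $k=0$ term, and using only $(a;q,p)_{-1}=1/\theta(aq^{-1};p)$ one checks that the prefactor $\bigl(cq^{-1}z,cq^{-1}/z;q,p\bigr)_1$ exactly cancels that term's denominator $\theta(cq^{-1}z;p)\theta(cq^{-1}/z;p)$, leaving $f(z)=\varepsilon f(z)$. For $m=1$ the right-hand side reduces, after additionally using \eqref{tif} in the form $\theta(z^{-2};p)=-z^{-2}\theta(z^2;p)$, to the defining formula \eqref{defdf} of $\mathcal D_{c,q,p}$; this is both a base case and a check that the normalisation in \eqref{eqn:Dn} is correct.

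For the inductive step I would first rewrite the defining recursion $\mathcal D^{(m)}_{c,q,p}=\mathcal D^{(m-1)}_{cq^{3/2},q,p}\mathcal D_{c,q,p}$, which peels off the innermost factor, in the equivalent form $\mathcal D^{(m)}_{c,q,p}=\mathcal D_{cq^{3(m-1)/2},q,p}\circ\mathcal D^{(m-1)}_{c,q,p}$, which peels off the outermost one; this follows by an easy induction. By \eqref{degl} the function $h:=\mathcal D^{(m-1)}_{c,q,p}f$ lies in $W_{cq^{3(m-1)/2}}^{\,n-m+1}$, so the induction hypothesis applies and gives $h(z)=\sum_{j=0}^{m-1}\beta_j(z)\,f\bigl(q^{(m-1)/2-j}z\bigr)$, where $\beta_j(z)$ is the coefficient read off from \eqref{eqn:Dn} with $m$ replaced by $m-1$. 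Substituting $h\bigl(q^{\pm1/2}z\bigr)=\sum_j\beta_j\bigl(q^{\pm1/2}z\bigr)f\bigl(q^{m/2-j}z\bigr)$, respectively $\sum_j\beta_j\bigl(q^{\pm1/2}z\bigr)f\bigl(q^{m/2-1-j}z\bigr)$, into \eqref{defdf} for $\mathcal D_{cq^{3(m-1)/2},q,p}$ shows that both sides of \eqref{eqn:Dn} are $\C$-linear combinations of the $m+1$ values $f\bigl(q^{m/2-k}z\bigr)$, $0\le k\le m$, so it is enough to compare the coefficient of each. On the left this coefficient collects exactly two contributions, from $j=k$ (with a $+$ sign) and $j=k-1$ (with a $-$ sign), only one of them present when $k=0$ or $k=m$, and equals
\begin{gather*}
2q^{1/2}z\,\frac{\theta\bigl(cq^{3m/2-2}z,\,cq^{3m/2-1}z,\,cq^{3m/2-2}/z,\,cq^{3m/2-1}/z;p\bigr)}{\theta(q,z^2;p)}\,\bigl(\beta_k(q^{1/2}z)-\beta_{k-1}(q^{-1/2}z)\bigr).
\end{gather*}

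It then remains to check that this equals the $k$-th coefficient on the right of \eqref{eqn:Dn}. Inserting the explicit forms of $\beta_k(q^{1/2}z)$ and $\beta_{k-1}(q^{-1/2}z)$, a large factor common to both pulls out of the difference --- the power $(-2z)^{m-1}$, the relevant power of $q$ together with $\theta(q;p)^{-(m-1)}$, the monomial $z^{2(k-m)}$, and the shifted factorial $\bigl(cq^{m/2-k}z,cq^{-m/2+k}/z;q,p\bigr)_{m-2}$ --- while the two denominators are brought to the common denominator $\bigl(q^{m-2k+1}z^2;q,p\bigr)_k\bigl(q^{2k-m+1}z^{-2};q,p\bigr)_{m-k}$ of \eqref{eqn:Dn} at the cost of the single extra factors $\theta(q^kz^{-2};p)$ and $\theta(q^{m-k}z^2;p)$, respectively. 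Converting the two $(m-1)$-binomials into $\begin{bmatrix}m\\k\end{bmatrix}_{p,q}$ by the elementary ratios $\begin{bmatrix}m\\k\end{bmatrix}_{p,q}=\frac{\theta(q^m;p)}{\theta(q^{m-k};p)}\begin{bmatrix}m-1\\k\end{bmatrix}_{p,q}=\frac{\theta(q^m;p)}{\theta(q^k;p)}\begin{bmatrix}m-1\\k-1\end{bmatrix}_{p,q}$, cancelling $\bigl(cq^{m/2-k}z,cq^{-m/2+k}/z;q,p\bigr)_{m-2}$ against the matching part of $\bigl(cq^{m/2-k}z,cq^{-m/2+k}/z;q,p\bigr)_{m-1}$ on the right, and recognising $\bigl(cq^{m/2-1}z,cq^{m/2-1}/z;q,p\bigr)_{m+1}$ as the product of $\bigl(cq^{3m/2-2}z,cq^{3m/2-2}/z;q,p\bigr)_2$ with the theta-products carried by the $\beta$'s up to a few factors, the whole identity collapses to a single three-term theta relation --- an instance of the addition formula \eqref{addf} for a suitable quadruple of arguments built from $cz$, $c/z$, $z^2$ and powers of $q$. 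In particular, \eqref{addf} is what supplies the $\theta(z^2;p)$ factor absorbing the denominator $\theta(z^2;p)$ coming from \eqref{defdf}. The real obstacle is not \eqref{addf} itself but reaching it: keeping the bookkeeping of the half-integer powers of $q$ introduced by \eqref{defdf} and of the $q^{3/2}$-shifts of $c$ under $\mathcal D^{(m-1)}_{c,q,p}$ straight enough that everything lines up without a sign or exponent slip.
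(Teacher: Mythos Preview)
Your proposal is correct and follows essentially the same route as the paper: induction on $m$, with the inductive step carried out by writing $\mathcal D^{(m)}_{c,q,p}=\mathcal D_{cq^{3(m-1)/2},q,p}\circ\mathcal D^{(m-1)}_{c,q,p}$, substituting the inductive formula into \eqref{defdf}, collecting the two contributions to the coefficient of each $f\bigl(q^{m/2-k}z\bigr)$, and collapsing the resulting theta difference via the addition formula \eqref{addf}. The paper makes this fully explicit, in particular recording the substitution $(x,y,u,v)\mapsto\bigl(cq^{m-1},\,q^{(m+1)/2}z,\,q^{(m+1)/2}/z,\,q^{(m+1)/2-k}z\bigr)$ in \eqref{addf} (stated there in the shift $m\to m+1$), whereas you leave that identification schematic; but the structure and the key idea are the same.
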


\begin{proof}
We prove this by induction. If $m=1$, then \eqref{eqn:Dn} just
reduces to the def\/inition of $\mathcal{D}_{c,q,p}f(z)$ in~\eqref{defdf}.
Now say~\eqref{eqn:Dn} holds up to some~$m$.
Then if we let $f^{(m)}(z):= \mathcal{D}_{c,q,p}^{(m)}f(z)$,
\begin{gather*}
 \mathcal{D}_{c,q,p}^{(m+1)}f(z) =
\mathcal{D}_{cq^{\frac{3}{2}m},q,p}f^{(m)}(z)\\
 {} = 2q^{\frac{1}{2}}z\frac{\big(czq^{\frac{3}{2}m-\frac{1}{2}},
cq^{\frac{3}{2}m-\frac{1}{2}}/z;q,p\big)_2}{\theta(q,z^2;p)}
\big(f^{(m)}\big(q^{\frac{1}{2}}z\big)-f^{(m)}\big(q^{-\frac{1}{2}}z\big) \big)\\
 {} = 2q^{\frac{1}{2}}z\frac{\big(czq^{\frac{3}{2}m-\frac{1}{2}},
cq^{\frac{3}{2}m-\frac{1}{2}}/z;q,p\big)_2}{\theta(q,z^2;p)}
\frac{(-2z)^m q^{\frac{m(3-m)}{4}}}{(\theta(q;p))^m}
\Bigg\{ q^{\frac{m}{2}} \big(cq^{\frac{m}{2}-\frac{1}{2}}z,
cq^{\frac{m}{2}-\frac{3}{2}}/z;q,p\big)_{m+1} \\
 \quad {} \times \sum_{k=0}^{m} q^{(k-1)(m-k)}
\begin{bmatrix}m\\k\end{bmatrix}_{p,q}\frac{z^{2(k-m)}
\big(c q^{\frac{m}{2}-k+\frac{1}{2}}z, cq^{-\frac{m}{2}+k-\frac{1}{2}}/z;q,p\big)_{m-1}}
{\big(q^{m-2k+2}z^2 ;q,p\big)_k \big(q^{2k-m}z^{-2};q,p\big)_{m-k}} f\big(q^{\frac{m}{2}-k+\frac{1}{2}}z\big)\\
 \quad {}-q^{-\frac{m}{2}} \big(cq^{\frac{m}{2}-\frac{3}{2}}z,
cq^{\frac{m}{2}-\frac{1}{2}}/z;q,p\big)_{m+1}\\
 \quad{} \times \sum_{k=0}^{m} q^{(k+1)(m-k)}
\begin{bmatrix}m\\k\end{bmatrix}_{p,q}\frac{z^{2(k-m)}
\big(c q^{\frac{m}{2}-k-\frac{1}{2}}z, cq^{-\frac{m}{2}+k+\frac{1}{2}}/z;q,p\big)_{m-1}}
{\big(q^{m-2k}z^2 ;q,p\big)_k \big(q^{2k-m+2}z^{-2};q,p\big)_{m-k}}
f\big(q^{\frac{m}{2}-k-\frac{1}{2}}z\big)\Bigg\}\\
{}= (2z)^{m+1}q^{\frac{m(3-m)}{4}+\frac{m+1}{2}}(-1)^m
\frac{\big(czq^{\frac{3}{2}m-\frac{1}{2}}, cq^{\frac{3}{2}m-\frac{1}{2}}/z;q,p\big)_2}
{(\theta(q;p))^{m+1}\theta(z^2;p)} \big(c q^{\frac{m}{2}-\frac{1}{2}}z,
cq^{\frac{m}{2}-\frac{1}{2}}/z;q,p\big)_m\\
\quad{} \times\left\{ \theta\big(cq^{\frac{3}{2}m-\frac{1}{2}}z,
cq^{\frac{m}{2}-\frac{3}{2}}/z;p\big)\right.\\
 \quad{} \times \sum_{k=0}^{m} q^{(k-1)(m-k)}
\begin{bmatrix}m\\k\end{bmatrix}_{p,q}\frac{z^{2(k-m)}
\big(c q^{\frac{m}{2}-k+\frac{1}{2}}z, cq^{-\frac{m}{2}+k-\frac{1}{2}}/z;q,p\big)_{m-1}}
{\big(q^{m-2k+2}z^2 ;q,p\big)_k \big(q^{2k-m}z^{-2};q,p\big)_{m-k}} f\big(q^{\frac{m}{2}-k+\frac{1}{2}}z\big)\\
 \quad {} -q^{-m} \theta\big(cq^{\frac{m}{2}-\frac{3}{2}}z,
cq^{\frac{3}{2}m-\frac{1}{2}}/z;q,p\big)\\
\left. \quad{}\times \sum_{k=1}^{m+1} q^{k(m-k+1)}\!
\begin{bmatrix}m\\k-1\end{bmatrix}_{p,q}\!\frac{z^{2(k-m-1)}
\big(c q^{\frac{m}{2}-k+\frac{1}{2}}z, cq^{-\frac{m}{2}+k-\frac{1}{2}}/z;q,p\big)_{m-1}}
{\big(q^{m-2k+2}z^2 ;q,p\big)_{k-1} \big(q^{2k-m}z^{-2};q,p\big)_{m-k+1}}
f\big(q^{\frac{m}{2}-k+\frac{1}{2}}z\big)\right\}\\
 = (2z)^{m+1}q^{\frac{m(3-m)}{4}+\frac{m+1}{2}}(-1)^m
\frac{\big(c q^{\frac{m}{2}-\frac{1}{2}}z,
cq^{\frac{m}{2}-\frac{1}{2}}/z;q,p\big)_{m+2}}{(\theta(q;p))^{m+1}\theta\big(z^2;p\big)}\\
 \quad {}\times \left\{ \theta\big(cq^{\frac{3}{2}m-\frac{1}{2}}z,
cq^{\frac{m}{2}-\frac{3}{2}}/z;p\big) \times \frac{q^{-m}z^{-2m}
\big(cq^{\frac{m}{2}+\frac{1}{2}}z, cq^{-\frac{m}{2}-\frac{1}{2}}/z;q,p\big)_{m-1}}
{\big(q^{-m}z^{-2};q,p\big)_n}f\big(q^{\frac{m}{2}+\frac{1}{2}}z\big)\right.\\
 \quad{} -q^{-m} \theta\big(cq^{\frac{m}{2}-\frac{3}{2}}z,
cq^{\frac{3}{2}m-\frac{1}{2}}/z;p\big) \times \frac{\big(cq^{-\frac{m}{2}-\frac{1}{2}}z,
cq^{\frac{m}{2}+\frac{1}{2}}/z;q,p\big)_{m-1}}{\big(q^{-m}z^{2};q,p\big)_m}
f\big(q^{-\frac{m}{2}-\frac{1}{2}}z\big)\\
\quad{} + q^{-m}\sum_{k=1}^m q^{k(m-k+1)}\frac{\big(q^{1+k};q,p\big)_{m-k}}
{(q;q,p)_{m-k+1}}\frac{\big(cq^{\frac{m}{2}-k+\frac{1}{2}}z,
cq^{-\frac{m}{2}+k-\frac{1}{2}}/z;q,p\big)_{m-1}f\big(q^{\frac{m}{2}-k+\frac{1}{2}}z\big)}
{\big(q^{m-2k+2}z^2 ;q,p\big)_k \big(q^{2k-m}z^{-2};q,p\big)_{m-k+1}}\\
 \quad{} \times z^{2k-2m}\left[ \theta\big(q^{m-k+1},
c q^{\frac{3}{2}m -\frac{1}{2}}z, c q^{\frac{m}{2}-\frac{3}{2}}/z, q^k z^{-2};p\big)
\right.\\
\left.\left. \quad{}
-z^{-2}\theta\big(q^{k}, c q^{\frac{m}{2} -\frac{3}{2}}z, c q^{\frac{3}{2}m-\frac{1}{2}}/z,
q^{m-k+1} z^{2};p\big)\right]\right\}\\
{} = (-2z)^{m+1}q^{\frac{(m+1)(2-m)}{4}}
\frac{\big(c q^{\frac{m}{2}-\frac{1}{2}}z,
c q^{\frac{m}{2}-\frac{1}{2}}/z;q,p\big)_{m+2}}{(\theta(q;p))^{m+1}}\\
 \quad{} \times \sum_{k=0}^{m+1} q^{k(m+1-k)}
\begin{bmatrix}m+1\\k\end{bmatrix}_{p,q}\frac{z^{2(k-m-1)}
\big(c q^{\frac{m+1}{2}-k}z, cq^{-\frac{m+1}{2}+k}/z;q,p\big)_{m}}
{\big(q^{m-2k+2}z^2 ;q,p\big)_k \big(q^{2k-m}z^{-2};q,p\big)_{m+1-k}} f\big(q^{\frac{m+1}{2}-k}z\big).
\end{gather*}
Hence the theorem is proved. Notice that in the last step we used the
addition formula \eqref{addf} with the substitutions
\begin{gather*}
(x,y,u,v)\mapsto\big(cq^{m-1},q^{\frac{m+1}2}z,q^{\frac{m+1}2}/z,q^{\frac{m+1}2-k}z\big)
\end{gather*}
to simplify the summand.
\end{proof}

\begin{Remark}
Our formula in Theorem~\ref{thm:Dn} involves
the $c$-dependent elliptic Askey--Wilson ope\-rator ${\cal D}^{(m)}_{c,q,p}$.
Independently, Ismail, Rains and Stanton~\cite[Proposition~8.1]{IRS}
also gave an elliptic extension of Cooper's result.
Although at f\/irst glance Ismail, Rains and Stanton's result,
which involves a slightly dif\/ferent operator
(without the denominator variable $c$),
looks dif\/ferent from ours in Theorem~\ref{thm:Dn},
the two results are indeed equivalent up to a
multiplication of operators. (We would like to thank Eric Rains
for helping us to clarify this.) In fact, the operator
\begin{gather*}
\big(q^{\frac m2-1}c z,q^{\frac m2-1}c/z;q,p\big)_{m+1}^{-1}
{\cal D}^{(m)}_{c,q,p}
(cz,c/z;q,p)_{m-1}^{-1}
\end{gather*}
(which acts on functions $h(z)=g(z)/(czq^{m-1},cq^{m-1}/z;q,p)_{n-m+1}$
with $g(z)=g(1/z)$ and $g(pz)=p^{-n}z^{-2n}g(z)$)
is independent of $c$, thus our operator ${\cal D}^{(m)}_{c,q,p}$
is proportional to{\samepage
\begin{gather*}
 v^{2m} \theta\big(q^{\frac m2}v z,q^{\frac m2}v/z;q\big)
\big(q^{\frac m2 -1}c z,q^{\frac m2-1}c/z;q,p\big)_{m+1}
{\cal D}_m(q;p)
(cz,c/z;q,p)_{m-1}
\theta(vz,v/z;q)^{-1}\!,
\end{gather*}
with ${\cal D}_m(q;p)$ from \cite[Section~8]{IRS}, where
the constant of proportionality is independent of~$v$.}
\end{Remark}

We are now able to obtain an elliptic extension of Ismail and
Stanton's~\cite[Theorem~3.4]{IS2}
interpolation formula. In particular, for any $a\in\C$, the function
$f\in W_c^n$ is uniquely determined by its evaluation at
the $n+1$ interpolation
points $a,aq,\dots,aq^n$, with closed form coef\/f\/icients.

\begin{Theorem}\label{thm:id}
If $f$ is in $W_c^n$, then
\begin{gather*}
\frac{\big(a^2 q, q, cz, c/z;q,p\big)_n}{(ac, c/a, aqz, aq/z;q,p)_n}f(z)\\
\qquad{}
= \sum_{k=0}^n q^k \frac{\theta\big(a^2 q^{2k};p\big)}{\theta(a^2 ;p)}
\frac{\big( q^{-n}, a^2 ,aq/c,ac q^n, az, a/z;q,p\big)_k}
{\big(q, a^2 q^{n+1},ac, aq^{1-n}/c, aqz, aq/z;q,p\big)_k}f\big(a q^k\big).
\end{gather*}
\end{Theorem}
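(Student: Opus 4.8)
The plan is to imitate, in the elliptic setting, the way Ismail and Stanton derived their $q$-interpolation formula from Cooper's formula: apply the elliptic Taylor expansion of Theorem~\ref{thm:Taylor} and then feed in the explicit description of the iterated operator from Theorem~\ref{thm:Dn}.

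First I would expand the given $f\in W_c^n$ at the base point $a$ via Theorem~\ref{thm:Taylor},
\begin{gather*}
f(z)=\sum_{k=0}^n f_k\,\frac{(az,a/z;q,p)_k}{(cz,c/z;q,p)_k},\qquad
f_k=\frac{(-1)^kq^{-k(k-1)/4}\theta(q;p)^k}{(2a)^k(q,c/a,acq^{k-1};q,p)_k}\big[\mathcal D_{c,q,p}^{(k)}f(z)\big]_{z=aq^{k/2}},
\end{gather*}
and specialize Theorem~\ref{thm:Dn} with $m=k$ to $z=aq^{k/2}$. There the argument $q^{k/2-j}z$ of $f$ in the summand of \eqref{eqn:Dn} becomes $aq^{k-j}$, so $\big[\mathcal D_{c,q,p}^{(k)}f(z)\big]_{z=aq^{k/2}}$ is written as an explicit linear combination of $f(a),f(aq),\dots,f(aq^k)$. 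Plugging this into the formula for $f_k$, the bulky prefactors telescope against each other: the powers of $q$ and of $\theta(q;p)$ cancel (e.g.\ $q^{-k(k-1)/4}\cdot q^{k^2/2}\cdot q^{k(3-k)/4}=q^{k}$) and $(acq^{k-1};q,p)_{k+1}/(acq^{k-1};q,p)_k=\theta(acq^{2k-1};p)$, so the coefficient of $f(aq^{k-j})$ in $f_k$ collapses to a manageable expression whose $k$-dependence is carried by the very-well-poised factor $\theta(acq^{2k-1};p)$.

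Next I would set $\ell=k-j$ and interchange the order of summation, getting $f(z)=\sum_{\ell=0}^n f(aq^\ell)\,C_\ell(z)$ with $C_\ell(z)=\sum_{k=\ell}^n\big((az,a/z;q,p)_k/(cz,c/z;q,p)_k\big)c_{k,\ell}$, the $c_{k,\ell}$ being explicit and free of $z$. Shifting $k\mapsto k+\ell$ and concatenating $q,p$-shifted factorials exhibits $C_\ell(z)$ as $\big((az,a/z;q,p)_\ell/(cz,c/z;q,p)_\ell\big)$ times a $z$-free constant times a finite very-well-poised elliptic series, summed over $0\le k\le n-\ell$, with very-well-poised parameter $a_1=acq^{2\ell-1}$ and remaining numerator parameters $a_6=aq^\ell z$, $a_7=aq^\ell/z$, $a_8=(c/a)q^{-1}$. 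Here the elliptic balancing holds automatically, $a_6a_7a_8=a^2q^{2\ell}(c/a)q^{-1}=a_1$, which is exactly the relation making this ${}_8V_7$ of ``${}_6\phi_5$-type''; such a series has telescoping partial sums, so by a direct application of the addition formula~\eqref{addf} it sums to $\big(a_1q,a_1q/(a_6a_7),a_1q/(a_6a_8),a_1q/(a_7a_8);q,p\big)_{n-\ell}\big/\big(a_1q/a_6,a_1q/a_7,a_1q/a_8,q;q,p\big)_{n-\ell}$, and hence $C_\ell(z)$ equals $\big((az,a/z;q,p)_\ell/(cz,c/z;q,p)_\ell\big)$ times that product times the $z$-free constant.

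It then remains to reconcile this with the coefficient asserted in the theorem. Its $z$-dependence — after substituting $a_1,a_6,a_7,a_8$ a ratio of $(aqz,aq/z;q,p)$-type factors — agrees with the $z$-dependent part of the asserted coefficient by pure telescoping of shifted factorials, the normalizing factor $(a^2q,q,cz,c/z;q,p)_n/(ac,c/a,aqz,aq/z;q,p)_n$ being absorbed into $C_\ell(z)$ along the way; the surviving $z$-free scalar is then forced to equal $q^\ell\theta(a^2q^{2\ell};p)/\theta(a^2;p)$ times the stated ratio of $q,p$-shifted factorials. I expect the genuine obstacle to be exactly this bookkeeping: steering the powers of $q$ and the reflections $\theta(x;p)=-x\theta(1/x;p)$ and $(pa;q,p)_n=(-1)^na^{-n}q^{-\binom n2}(a;q,p)_n$ through several layers so that the inner series becomes visibly very-well-poised, and then checking that the surviving scalar is precisely the one claimed. (A shorter route bypasses Theorem~\ref{thm:Dn}: write $\ell_j(z)$ for the asserted coefficient of $f(aq^j)$; check $\ell_j\in W_c^n$, that $\ell_j(aq^i)=0$ for $i\ne j$ — immediate from a factor $\theta(1;p)=0$ in a shifted factorial evaluated at $z=aq^i$ — and that $\ell_j(aq^j)=1$ — again one shifted-factorial identity; since a nonzero element of the $(n+1)$-dimensional space $W_c^n$ cannot vanish at the $n+1$ generic points $a,aq,\dots,aq^n$, by Lemma~\ref{Lem} every $f\in W_c^n$ equals $\sum_{j=0}^nf(aq^j)\ell_j(z)$, and the case of arbitrary $a$ follows by continuity in $a$.)
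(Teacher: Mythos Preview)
Your main approach is essentially the paper's: combine Theorem~\ref{thm:Taylor} with Theorem~\ref{thm:Dn}, interchange the order of summation, evaluate the inner sum in closed form, and simplify. The one difference is in how the inner sum is handled: the paper inserts the trivially cancelling factors $(q^{-(n-\ell)},a_1q^{n-\ell+1};q,p)_j$ top and bottom so that the sum becomes a terminating ${}_{10}V_9$ and then applies Frenkel--Turaev's summation~\eqref{10V9}, whereas you appeal directly to the telescoping of the balanced ${}_8V_7$ via the addition formula; these are equivalent, since that very specialization of~\eqref{10V9} (with $bcd=a$, so that $e=aq^{N+1}$ pairs with $q^{-N}$) is exactly the case admitting an elementary telescoping proof. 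Your parenthetical Lagrange-interpolation alternative---checking that the asserted coefficients $\ell_j\in W_c^n$ satisfy $\ell_j(aq^i)=\delta_{ij}$ and invoking the dimension count from Lemma~\ref{Lem}---is a genuinely different and shorter route that the paper does not take; it bypasses Theorem~\ref{thm:Dn} entirely, at the cost of not exhibiting the result as a consequence of the explicit operator formula.
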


\begin{proof}
By combining Theorem \ref{thm:Taylor} and Theorem \ref{thm:Dn}, we f\/ind that
\begin{gather*}
 f(z)= \sum_{k=0}^n q^k \theta\big(c q^{-1}/a, ac q^{2k-1};p\big)
\frac{(az, a/z;q,p)_k}{(cz, c/z;q,p)_k}\notag\\
\hphantom{f(z)=}{} \times \sum_{j=0}^k q^{-(k-j)^2}a^{2(j-k)}
\frac{\big(ac q^{k-j}, cq^{-k+j}/a;q,p\big)_{k-1}}
{\big(q,a^2 q^{2k-2j+1};q,p\big)_j \big(q, a^{-2}q^{2j -2k+1};q,p\big)_{k-j}}f\big(aq^{k-j}\big).
\end{gather*}
By shifting the index $k\mapsto k+j$, we get
\begin{gather*}
 f(z)
 =\sum_{k=0}^n \sum_{j=0}^{n-k} q^{k+j} \theta\big(c q^{-1}/a, ac q^{2k+2j-1};p\big)
\frac{(az, a/z;q,p)_{k+j}}{(cz, c/z;q,p)_{k+j}}\\
\hphantom{f(z)=}{} \times q^{-k^2}a^{-2k}\frac{\big(ac q^{k}, cq^{-k}/a;q,p\big)_{k+j-1}}
{\big(q,a^2 q^{2k+1};q,p\big)_j \big(q, a^{-2}q^{-2k+1};q,p\big)_{k}}f\big(aq^{k}\big)\\
\hphantom{f(z)}{}
 = \sum_{k=0}^n q^{k(1-k)}a^{-2k} \frac{\big(ac q^{k}, cq^{-k}/a, az, a/z;q,p\big)_k}
{\big(q, a^{-2}q^{-2k +1}, cz, c/z ;q,p\big)_k}f\big(a q^{k}\big)\\
\hphantom{f(z)=}{}
 \times \sum_{j=0}^{n-k}q^j \frac{\theta\big(ac q^{2k+2j-1};p\big)}
{\theta\big(ac q^{2k-1};p\big)}
\frac{\big(ac q^{2k-1}, cq^{-1}/a, aq^k z, aq^k /z, q^{-n+k}, ac q^{n+k};q,p\big)_j}
{\big(q, a^2 q^{2k+1}, c q^{k} /z, cq^k z, q^{-n+k}, ac q^{n+k};q,p\big)_j}\\
\hphantom{f(z)}{}
= \sum_{k=0}^n q^{k(1-k)}a^{-2k} \\
\hphantom{f(z)=}{}\times
\frac{\big(ac q^{k}, cq^{-k}/a, az, a/z;q,p\big)_k
\big(ac q^{2k}, aq^{k+1}/z, aq^{k+1}z, c/a;q,p\big)_{n-k}}
{\big(q, a^{-2}q^{-2k +1}, cz, c/z ;q,p\big)_k
\big(q, a^2 q^{2k+1}, c q^k /z, c q^k z;q,p\big)_{n-k}}f\big(a q^{k}\big).
\end{gather*}
The last sum was obtained by virtue of the Frenkel and Turaev
summation formula~\eqref{10V9}. The theorem then follows by elementary
manipulations.
\end{proof}

\begin{Corollary}\label{cor:ex1}
We have the elliptic Karlsson--Minton type identity
\begin{gather*}
 \frac{\big(q,a^2 q;q,p\big)_n}{(aqz, aq/z;q,p)_n}
\frac{(bz, b/z;q,p)_s (dz, d/z;q,p)_{n-s}}
{(ab, b/a;q,p)_s (ad, d/a;q,p)_{n-s}} \\
\qquad{} ={}_{12}V_{11}\big(a^2, q^{-n}, az, a/z, aq/b, aq/d, abq^s, ad q^{n-s};q,p\big).
\end{gather*}
\end{Corollary}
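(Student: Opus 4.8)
The plan is to specialize the interpolation formula of Theorem~\ref{thm:id} to a suitably chosen $f\in W_c^n$. Since the right-hand side of the Corollary is a~${}_{12}V_{11}$, i.e.\ ``one level above'' the ${}_{10}V_9$ obtained in the Example after Theorem~\ref{thm:Taylor}, and since the numerator parameters split as $abq^s$ and $adq^{n-s}$ with $s+(n-s)=n$, the natural candidate is a product of two well-poised monomials,
\begin{gather*}
f(z)=\frac{(ac,c/a;q,p)_n}{(ab,b/a;q,p)_s\,(ad,d/a;q,p)_{n-s}}\,
\frac{(bz,b/z;q,p)_s\,(dz,d/z;q,p)_{n-s}}{(cz,c/z;q,p)_n}.
\end{gather*}
First I would check $f\in W_c^n$: the numerator $(bz,b/z;q,p)_s(dz,d/z;q,p)_{n-s}$ is symmetric under $z\mapsto 1/z$ and, by \eqref{p1id}, picks up the factor $p^{-n}z^{-2n}$ under $z\mapsto pz$, hence is an even theta function of order $2n$ and zero characteristic, so dividing by $(cz,c/z;q,p)_n$ lands $f$ in $W_c^n$. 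The scalar prefactor is irrelevant for membership, but is chosen precisely so that the factor $(cz,c/z;q,p)_n/(ac,c/a;q,p)_n$ on the left-hand side of Theorem~\ref{thm:id} cancels against the denominator of~$f$, turning that left-hand side into exactly the left-hand side of the Corollary.

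The next step is to evaluate $f$ at the interpolation nodes $z=aq^k$. Writing $bz=abq^k$, $b/z=(b/a)q^{-k}$, and similarly for $c$ and $d$, I would apply the two standard reindexing identities for theta shifted factorials, namely $(xq^{k};q,p)_m/(x;q,p)_m=(xq^{m};q,p)_k/(x;q,p)_k$ and $(xq^{-k};q,p)_m=(x;q,p)_m(q/x;q,p)_k\,q^{-km}/(q^{1-m}/x;q,p)_k$ (both follow from the elementary manipulation rules for $(\,\cdot\,;q,p)_n$ and \eqref{tif}), to each of the six factors $(abq^k;q,p)_s$, $((b/a)q^{-k};q,p)_s$, $(adq^k;q,p)_{n-s}$, $((d/a)q^{-k};q,p)_{n-s}$, $(acq^k;q,p)_n$, $((c/a)q^{-k};q,p)_n$. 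The powers of $q$ that are produced are $q^{-ks}$, $q^{-k(n-s)}$ and $q^{-kn}$, which combine to cancel identically, and the $s$-, $(n-s)$- and $n$-fold factorials that are produced recombine with the scalar prefactor of $f$ and cancel it entirely, leaving
\begin{gather*}
f\big(aq^k\big)=\frac{(abq^s,aq/b,adq^{n-s},aq/d;q,p)_k\,(ac,aq^{1-n}/c;q,p)_k}
{(ab,aq^{1-s}/b,ad,aq^{1-n+s}/d;q,p)_k\,(acq^n,aq/c;q,p)_k}.
\end{gather*}

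Substituting this into the sum on the right-hand side of Theorem~\ref{thm:id}, the factors $(aq/c;q,p)_k$, $(acq^n;q,p)_k$, $(ac;q,p)_k$ and $(aq^{1-n}/c;q,p)_k$ all cancel between numerator and denominator, so $c$ disappears entirely and one is left with $\sum_{k=0}^n q^k\frac{\theta(a^2q^{2k};p)}{\theta(a^2;p)}\frac{(q^{-n},a^2,az,a/z,aq/b,aq/d,abq^s,adq^{n-s};q,p)_k}{(q,a^2q^{n+1},aqz,aq/z,ab,ad,aq^{1-s}/b,aq^{1-n+s}/d;q,p)_k}$, in which one recognizes the very-well-poised pattern $a^2q/a_i$ for each upper parameter $a_i\in\{q^{-n},az,a/z,aq/b,aq/d,abq^s,adq^{n-s}\}$; the $(q^{-n};q,p)_k$ factor makes the series terminate at $k=n$, matching the finite interpolation sum. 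It remains only to verify the elliptic balancing condition $q^2(a_6\cdots a_{12})^2=(a^2q)^6$: here $a_6\cdots a_{12}=q^{-n}\cdot az\cdot a/z\cdot aq/b\cdot aq/d\cdot abq^s\cdot adq^{n-s}=a^6q^2$, so the condition holds and the sum is indeed the claimed ${}_{12}V_{11}$. The part requiring care is the bookkeeping in the evaluation of $f(aq^k)$ --- keeping track of the powers of~$q$ and confirming that both the scalar prefactor and every $c$-dependent factor cancel exactly; none of it is deep, but it is where an error would most easily slip in. One should also note that, as in Theorems~\ref{thm:Taylor} and~\ref{thm:id}, the argument needs $a$ and $c$ generic, after which the identity extends to all parameter values by analyticity, both sides being ratios of finite products of theta functions.
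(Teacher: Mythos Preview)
Your proof is correct and follows the same approach as the paper: apply Theorem~\ref{thm:id} to $f(z)=(bz,b/z;q,p)_s(dz,d/z;q,p)_{n-s}/(cz,c/z;q,p)_n$ (the scalar prefactor you attach is immaterial). The paper states this in one line and omits the bookkeeping you have carefully carried out; your verification of the cancellation of the $c$-dependent factors and of the elliptic balancing condition is accurate.
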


\begin{proof}
We apply Theorem \ref{thm:id} to
\begin{gather*}
f(z)=\frac{(bz, b/z;q,p)_s (dz, d/z;q,p)_{n-s}}{(cz, c/z;q,p)_n}.\tag*{\qed}
\end{gather*}
\renewcommand{\qed}{}
\end{proof}

More generally, we have the following result.
\begin{Corollary}\label{cor:ex2}
We have the elliptic Karlsson--Minton type identity
\begin{gather*}
 \frac{\big(a^2 q, q;q,p\big)_n}{(aqz, aq/z;q,p)_n}
\prod_{j=1}^n \theta(b_j z, b_j/z;p)\notag\\
 \qquad{} =\sum_{k=0}^n q^{k(n+1)}\frac{\theta\big(a^2 q^{2k};p\big)}{\theta\big(a^2;p\big)}
\frac{\big(q^{-n}, a^2, az, a/z;q,p\big)_k}{\big(q, a^2 q^{n+1}, aqz, aq/z;q,p\big)_k}
\prod_{j=1}^n \theta\big(a b_k q^k, b_j q^{-k}/a;p\big).
\end{gather*}
\end{Corollary}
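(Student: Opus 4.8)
The plan is to derive this identity from Theorem \ref{thm:id} by choosing $f(z)$ to be a suitable product of theta functions, in exact analogy with the proof of Corollary \ref{cor:ex1}, but now allowing $n$ independent parameters $b_1,\dots,b_n$ rather than just the two groups governed by $b$ and $d$. The natural candidate is
\[
f(z)=\frac{\prod_{j=1}^n\theta(b_jz,b_j/z;p)}{(cz,c/z;q,p)_n}.
\]
First I would verify that this $f$ indeed lies in $W_c^n$: the numerator $\prod_{j=1}^n\theta(b_jz,b_j/z;p)$ is holomorphic for $z\neq0$, symmetric under $z\mapsto1/z$, and by repeated use of \eqref{p1id} it transforms under $z\mapsto pz$ by the factor $p^{-n}z^{-2n}$, so it is a $D_n$ theta function of degree $n$; dividing by $(cz,c/z;q,p)_n$ places $f$ in $W_c^n$ as required. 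Then Theorem \ref{thm:id} applies verbatim.

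Next I would substitute this $f$ into the right-hand side of Theorem \ref{thm:id}. The evaluations $f(aq^k)$ produce, in the numerator, the factor $\prod_{j=1}^n\theta\big(b_jaq^k,b_jq^{-k}/a;p\big)$ — which is exactly the product appearing on the right-hand side of the Corollary (modulo the typo $b_k\mapsto b_j$ in the statement) — together with the denominator factor $(caq^k,cq^{-k}/a;q,p)_n$ coming from the $(cz,c/z;q,p)_n$ in $f$. On the left-hand side of Theorem \ref{thm:id}, the factor $(cz,c/z;q,p)_n$ from the prefactor cancels against the denominator of $f(z)$, leaving precisely $\dfrac{(a^2q,q;q,p)_n}{(ac,c/a,aqz,aq/z;q,p)_n}\prod_{j=1}^n\theta(b_jz,b_j/z;p)$. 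So the whole identity, after this cancellation, reads as the claimed formula multiplied through by $(ac,c/a;q,p)_n$ on the left and with the $c$-dependent Pochhammer factors $(acq^k,cq^{-k}/a;q,p)_k$ and $(caq^k,cq^{-k}/a;q,p)_n$ still present inside the sum on the right.

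The main obstacle — really the only nontrivial step — is to check that all dependence on the auxiliary parameter $c$ cancels: the left side of the resulting identity carries the factor $(ac,c/a;q,p)_n$, while each summand on the right carries $\theta(b_jaq^k,\dots)$-independent $c$-factors, and one must show the quotient $(acq^k,cq^{-k}/a;q,p)_k\,(caq^k,cq^{-k}/a;q,p)_{n}$, after division by $(ac,c/a;q,p)_n$, collapses to something independent of $c$. This is a routine rearrangement of theta-shifted factorials using $(a;q,p)_{k+\ell}=(a;q,p)_k\,(aq^k;q,p)_\ell$ and the relation $(q^{-k}/a;q,p)_k=(-1)^k a^{-k}q^{-\binom{k+1}2}(aq;q,p)_k$-type reflection formulas (equivalently \eqref{tif}); one finds the $c$'s pair up and disappear, leaving exactly the $q^{k(n+1)}$ factor — note $q^{k(1-k)}a^{-2k}$ from the Taylor step combines with $q^k$ and with $q^{k\cdot?}$ absorbed from rewriting $(az,a/z;q,p)_k/(aqz,aq/z;q,p)_k$ — and the prefactors $\theta(a^2q^{2k};p)/\theta(a^2;p)$ and $(q^{-n},a^2;q,p)_k/(q,a^2q^{n+1};q,p)_k$ displayed in the Corollary. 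Since this is the same bookkeeping already carried out (for the special two-parameter case) in the proof of Corollary \ref{cor:ex1}, I would simply indicate that the identity follows from Theorem \ref{thm:id} applied to the above $f$, with the $c$-independence verified by elementary manipulations of $q,p$-shifted factorials, exactly as in the proof of Corollary \ref{cor:ex1}.
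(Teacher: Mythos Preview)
Your approach is exactly the paper's: choose $f(z)=\dfrac{\prod_{j=1}^n\theta(b_jz,b_j/z;p)}{(cz,c/z;q,p)_n}$ and apply Theorem~\ref{thm:id}. The paper's proof is in fact even more terse---it simply states this choice of $f$ and leaves all the $c$-cancellation and factorial bookkeeping (which you spell out in more detail, albeit with a few garbled factors that do not match the actual $c$-dependent Pochhammers in Theorem~\ref{thm:id}) to the reader.
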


\begin{proof}
We take
\begin{gather}\label{idclear}
f(z)=\frac{\prod\limits_{j=1}^n \theta(b_j z, b_j/z;p)}{(cz,c/z;q,p)_n}
\end{gather}
and apply Theorem \ref{thm:id}.
\end{proof}

\begin{Remark}
It should be noted that if in the proof of Corollary~\ref{cor:ex2}
we instead would have taken
\begin{gather*}
f(z)=\frac{\prod\limits_{j=1}^t \theta(b_j z, b_j/z;p)}{(cz,c/z;q,p)_t}
\end{gather*}
for $0\le t\le n$, we would have just obtained the special
case of Corollary~\ref{cor:ex2} with $b_j\mapsto cq^{j-1}$ for $t+1\le n$,
which is clear carrying out those specializations in~\eqref{idclear}.
\end{Remark}

We now consider a multivariate version of Theorem~\ref{thm:id}.
Let us consider the space of functions
\begin{gather*}
W_{c_1,\dots, c_m}^{n_1,\dots,n_m}:=\operatorname{Span}_{\C}
\Bigg\{ \frac{g_{k_1,\dots,k_m}(z_1,\dots, z_m)}
{\prod\limits_{i=1}^m (c_i z_i, c_i/z_i;q,p)_{k_i}},
\, 0\le k_i\le n_i,\, i=1,\dots,m\Bigg\},
\end{gather*}
where $g_{k_1,\dots,k_m}(z_1,\dots, z_m)$
runs over all functions being holomorphic
in $z_1,z_2,\dots,z_m\neq 0$ and symmetric in $z_i$ and $1/z_i$, and
\begin{gather*}
g_{k_1,\dots,k_m}(z_1,\dots,pz_i,\dots,z_m)=
\frac 1{p^{k_i}z_i^{2k_i}}g_{k_1,\dots,k_m}(z_1,\dots,z_i,\dots,z_m),
\end{gather*}
for all $i=1,\dots,m$.

We def\/ine a multivariate extension of the elliptic Askey--Wilson
operator as follows.
\begin{gather*}
\mathcal{D}_{c_i,q,p;z_i}f(z_1,\dots, z_m)=
 2q^{\frac{1}{2}}z_i
\frac{\theta \big(c_i z_i q^{-\frac{1}{2}},c_i z_i q^{\frac{1}{2}},
c_i q^{-\frac{1}{2}}/z_i, c_i q^{\frac{1}{2}}/z_i;p\big)}{\theta\big(q,z_i ^2;p\big)}\\
\hphantom{\mathcal{D}_{c_i,q,p;z_i}f(z_1,\dots, z_m)=}{}
 \times\Big(f\big(z_1,\dots, q^{\frac{1}{2}}z_i,\dots, z_m\big)-
f\big(z_1,\dots, q^{-\frac{1}{2}}z_i,\dots, z_m\big) \Big),
\\
\mathcal{D}_{c_i,q,p;z_i}^{(k+1)}=\mathcal{D}_{c_i q^{\frac{3}{2}k},q,p;z_i}
\mathcal{D}_{c_i,q,p;z_i}^{(k)},
\end{gather*}
and for $\mathbf c=(c_1,\dots,c_m)$, $\mathbf k=(k_1,\dots,k_m)$,
and $\mathbf z=(z_1,\dots, z_m)$,
\begin{gather*}
\mathcal{D}_{\mathbf c, q,p;\mathbf z}^{(\mathbf k)}=
\mathcal{D}_{c_1,q,p;z_1}^{(k_1)}\cdots \mathcal{D}_{c_m,q,p;z_m}^{(k_m)}.
\end{gather*}

\begin{Theorem}\label{thm:multvTaylor}
If $f(z_1,\dots, z_m)$ is in $W_{\mathbf c}^{\mathbf n}$, then
\begin{gather}\label{eqn:multvTaylor}
f(z_1,\dots, z_m)=\sum_{k_1,\dots,k_m=0}^{n_1,\dots,n_m} f_{k_1,\dots, k_m}
\prod_{i=1}^m \frac{(a_i z_i, a_i/z_i ;q,p)_{k_i}}{(c_i z_i, c_i/z_i;q,p)_{k_i}},
\end{gather}
where
\begin{gather*}
f_{k_1,\dots, k_m}= \prod_{i=1}^m\frac{(-1)^{ k_i}
q^{-\frac{k_i(k_i-1)}{4}} (\theta(q;p))^{k_i}}
{(2 a_i)^{k_i}\big(q, c_i /a_i, a_i c_i q^{k_i -1};q,p\big)_{k_i}}
\big[ \mathcal{D}_{\mathbf c, q,p;\mathbf z}^{(\mathbf k)}
f(z_1,\dots, z_m)\big]_{z_i = a_i q^{k_i /2}},
\end{gather*}
where $\mathbf k=(k_1,\dots, k_m)$.
\end{Theorem}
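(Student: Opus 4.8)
The plan is to prove the multivariate expansion by peeling off one variable at a time and reducing to the single-variable elliptic Taylor theorem (Theorem~\ref{thm:Taylor}), using that the operators $\mathcal{D}_{c_i,q,p;z_i}$ act on pairwise disjoint variables and therefore commute, and that $W_{\mathbf c}^{\mathbf n}$ is, separately in each $z_i$, of the single-variable type required by Theorem~\ref{thm:Taylor}. Concretely, I would induct on the number $m$ of variables, the case $m=1$ being Theorem~\ref{thm:Taylor} verbatim.

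For the inductive step I would fix $z_2,\dots,z_m$ and regard $f$ as a function of $z_1$ alone. Writing $f=\sum_{\mathbf k} g_{\mathbf k}(\mathbf z)\big/\prod_{i}(c_iz_i,c_i/z_i;q,p)_{k_i}$, each term with $z_2,\dots,z_m$ frozen has the shape $\tilde g_{k_1}(z_1)\big/(c_1z_1,c_1/z_1;q,p)_{k_1}$ with $k_1\le n_1$, where $\tilde g_{k_1}$ is holomorphic for $z_1\neq0$, invariant under $z_1\mapsto1/z_1$, and satisfies $\tilde g_{k_1}(pz_1)=p^{-k_1}z_1^{-2k_1}\tilde g_{k_1}(z_1)$; hence $f(\,\cdot\,,z_2,\dots,z_m)\in W_{c_1}^{n_1}$. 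Applying Theorem~\ref{thm:Taylor} in the variable $z_1$ (with parameter $a_1$) gives $f(z_1,\dots,z_m)=\sum_{k_1=0}^{n_1}F_{k_1}(z_2,\dots,z_m)\,(a_1z_1,a_1/z_1;q,p)_{k_1}\big/(c_1z_1,c_1/z_1;q,p)_{k_1}$, where $F_{k_1}$ equals the stated one-variable normalizing constant times $\bigl[\mathcal{D}_{c_1,q,p;z_1}^{(k_1)}f\bigr]_{z_1=a_1q^{k_1/2}}$ (note that $\mathcal{D}_{c_1,q,p;z_1}^{(k_1)}$ acting in $z_1$ coincides with the iterated operator $\mathcal{D}_{c_1,q,p}^{(k_1)}$ of Theorem~\ref{thm:Taylor}).

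The crucial point is then that each coefficient $F_{k_1}$, viewed as a function of $(z_2,\dots,z_m)$, lies in $W_{c_2,\dots,c_m}^{n_2,\dots,n_m}$ \emph{with the same} parameters $c_2,\dots,c_m$ and bounds $n_2,\dots,n_m$: the operator $\mathcal{D}_{c_1,q,p;z_1}$ only shifts $z_1$ and multiplies by a function of $z_1$, so it commutes with multiplication by any function of $z_2,\dots,z_m$ and leaves the holomorphy and the quasi-periodicity conditions in $z_2,\dots,z_m$ untouched (the degree-lowering via \eqref{degl} occurs only in the $z_1$-slot, which becomes irrelevant after the specialization $z_1=a_1q^{k_1/2}$, this specialization sending a term $g_{\mathbf k}\big/\prod_i(c_iz_i,c_i/z_i;q,p)_{k_i}$ to a scalar multiple of an element of $W_{c_2,\dots,c_m}^{n_2,\dots,n_m}$). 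Invoking the induction hypothesis, I would expand $F_{k_1}$ in the product basis over $i=2,\dots,m$; since all the $\mathcal{D}_{c_i,q,p;z_i}$ (including $\mathcal{D}_{c_1,q,p;z_1}^{(k_1)}$) commute, the inner operator of the $(m-1)$-variable coefficient applied to $F_{k_1}$ is precisely $\bigl[\mathcal{D}_{\mathbf c,q,p;\mathbf z}^{(\mathbf k)}f\bigr]$ evaluated at all $z_i=a_iq^{k_i/2}$, and the product of the single-variable normalizing constants over $i=1,\dots,m$ assembles into the asserted $f_{k_1,\dots,k_m}$. Multiplying the two expansions together yields \eqref{eqn:multvTaylor}; uniqueness of the coefficients, if desired, follows from Lemma~\ref{Lem} applied in each variable.

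The main (and essentially only) obstacle is the bookkeeping of the last paragraph: checking that after the $z_1$-expansion the coefficient $F_{k_1}$ genuinely falls back into the same multivariate space $W_{c_2,\dots,c_m}^{n_2,\dots,n_m}$ so the induction hypothesis applies, and that the nested evaluations $[\,\cdot\,]_{z_i=a_iq^{k_i/2}}$ together with the commuting single-variable operators recombine into exactly $\bigl[\mathcal{D}_{\mathbf c,q,p;\mathbf z}^{(\mathbf k)}f\bigr]_{z_i=a_iq^{k_i/2}}$ with the correct scalar prefactor. Everything else is a straightforward iteration of Theorem~\ref{thm:Taylor}.
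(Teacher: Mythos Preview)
Your inductive peel-off argument is correct, but it is not the route the paper takes. The paper's proof is a direct verification: it first records the single-variable computation
\[
\mathcal{D}_{c_j,q,p;z_j}\prod_{i=1}^m \frac{(a_i z_i,a_i/z_i;q,p)_{n_i}}{(c_i z_i,c_i/z_i;q,p)_{n_i}}
= (\text{explicit factor in }j)\cdot\prod_{i\neq j}\frac{(a_i z_i,a_i/z_i;q,p)_{n_i}}{(c_i z_i,c_i/z_i;q,p)_{n_i}},
\]
iterates it, and evaluates at $z_i=a_iq^{k_i/2}$ to obtain a product of Kronecker deltas $\prod_i\delta_{n_ik_i}$; one then applies $\mathcal{D}_{\mathbf c,q,p;\mathbf z}^{(\mathbf j)}$ to both sides of the asserted expansion and specializes to read off the coefficients. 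This bypasses entirely the point you flag as the main obstacle, namely checking that the intermediate coefficients $F_{k_1}(z_2,\dots,z_m)$ land back in $W_{c_2,\dots,c_m}^{n_2,\dots,n_m}$: the paper never forms those intermediate objects. Your approach, by contrast, is constructive and makes explicit that the multivariate theorem is literally an $m$-fold iteration of Theorem~\ref{thm:Taylor}; the price is the extra bookkeeping you identify, which is routine but not entirely content-free (it relies on the observation that $\mathcal{D}_{c_1,q,p;z_1}^{(k_1)}$ followed by evaluation at $z_1=a_1q^{k_1/2}$ is, by Theorem~\ref{thm:Dn}, a finite linear combination of point evaluations $z_1\mapsto a_1q^{k_1-j}$, each of which manifestly preserves membership in $W_{c_2,\dots,c_m}^{n_2,\dots,n_m}$). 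Either argument is fine; the paper's is shorter.
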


\begin{proof}
Note that, for each $j=1,\dots, m$,
\begin{gather}\label{eqn:ndiff}
\mathcal{D}_{c_j,q,p;z_j}\prod_{i=1}^m \frac{(a_i z_i,a_i/z_i;q,p)_{n_i}}
{(c_i z_i, c_i/z_i;q,p)_{n_i}}\\
\quad{}
=\frac{(-1)2a_j \theta\big(c_j/a_j, a_j c_j q^{n_j -1}, q^{n_j};p\big)}{\theta(q;p)}
\frac{\big(a_j q^{\frac{1}{2}}z_j,a_j q^{\frac{1}{2}}/z_j;q,p\big)_{n_j -1}}
{\big(c_j q^{\frac{3}{2}}z_j, c_j q^{\frac{3}{2}}/z_j;q,p\big)_{n_j -1}}
\prod_{\substack{i=1,\\ i\ne j}}^m \frac{(a_i z_i,a_i/z_i;q,p)_{n_i}}
{(c_i z_i, c_i/z_i;q,p)_{n_i}}.\notag
\end{gather}
Iterating \eqref{eqn:ndiff} gives
\begin{gather*}
\big[ \mathcal{D}_{\mathbf c, q,p;\mathbf z}^{(\mathbf k)}
f(z_1,\dots, z_m)\big]_{z_i= a_i q^{k_i /2}}\\
\qquad{}
 = \prod_{i=1}^m (-1)^{k_i}(2a_i)^{k_i}q^{\frac{k_i (k_i -1)}{4}}
\frac{(q;q,p)_{n_i} \big(c_i/a_i, a_i c_i q^{n_i -1};q,p\big)_{k_i}}
{(q;q,p)_{n_i -k_i}\theta(q;p)^{k_i}}\\
 \qquad\quad {}\times \left[ \frac{\big(a_i q^{\frac{k_i}{2}}z_i ,
a_i q^{\frac{k_i}{2}}/z_i;q,p\big)_{n_i-k_i}}{\big(c_i q^{\frac{3k_i}{2}}z_i ,
c_i q^{\frac{3k_i}{2}}/z_i;q,p\big)_{n_i-k_i}}\right]_{z_i = a_i q^{\frac{k_i}{2}}}\\
\qquad{}= \prod_{i=1}^m (-1)^{k_i}(2a_i)^{k_i}q^{\frac{k_i (k_i -1)}{4}}
\frac{\big(q,c_i/a_i, a_i c_i q^{k_i -1};q,p\big)_{k_i}}{\theta(q;p)^{k_i}}\delta_{n_i k_i}.
\end{gather*}
Then the theorem follows by applying
$\mathcal{D}_{\mathbf c, q,p;\mathbf z}^{(\mathbf j)}$ to both sides of
\eqref{eqn:multvTaylor} and then setting $z_i=a_i q^{j_i/2}$, for
$i=1,\dots, m$ and $\mathbf j=(j_1,\dots, j_m)$.
\end{proof}

Now we provide a multivariate extension of Theorem~\ref{thm:Dn}.

\begin{Theorem}\label{thm:multvDn}
For $\mathbf n=(n_1,\dots, n_m)$, $\mathbf c=(c_1,\dots, c_m)$,
\begin{gather*}
\mathcal{D}_{\mathbf c,q,p;\mathbf z}^{(\mathbf n)}f(z_1,\dots, z_m) =
\prod_{i=1}^m\left[ (-2z_i)^{n_i}q^{\frac{n_i(3-n_i)}{4}}
\frac{\big(c_i q^{\frac{n_i}{2}-1}z_i, c_i q^{\frac{n_i}{2}-1}/z_i;q,p\big)_{n_i+1}}
{(\theta(q;p))^{n_i}}\right] \\
\qquad{}\times \sum_{k_m=0}^{n_m}\cdots\sum_{k_1=0}^{n_1}\prod_{i=1}^m
\Bigg( q^{k_i(n_i-k_i)}\begin{bmatrix}n_i\\k_i\end{bmatrix}_{p,q}
\frac{z_i^{2(k_i-n_i)}\big(c_i q^{\frac{n_i}{2}-k_i}z_i,
c_iq^{-\frac{n_i}{2}+k_i}/z_i;q,p\big)_{n_i-1}}{\big(q^{n_i-2k_i+1}z_i^2 ;q,p\big)_{k_i}
\big(q^{2k_i-n_i+1}z_i^{-2};q,p\big)_{n_i-k_i}} \\
\qquad\qquad{} \times f\big(q^{\frac{n_1}{2}-k_1}z_1,\dots,
q^{\frac{n_m}{2}-k_m}z_m\big)\Bigg).
\end{gather*}
\end{Theorem}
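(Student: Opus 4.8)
The plan is to reduce Theorem~\ref{thm:multvDn} to the single-variable case, namely Theorem~\ref{thm:Dn}, by exploiting the fact that the operator $\mathcal{D}_{\mathbf c,q,p;\mathbf z}^{(\mathbf n)}$ factors as a composition $\mathcal{D}_{c_1,q,p;z_1}^{(n_1)}\cdots\mathcal{D}_{c_m,q,p;z_m}^{(n_m)}$ of operators each acting in a single variable. The key observation is that when $\mathcal{D}_{c_i,q,p;z_i}^{(n_i)}$ acts, all the other variables $z_j$ ($j\neq i$) are mere spectators: the operator $\mathcal{D}_{c_i,q,p;z_i}$ only shifts $z_i$ and the prefactor depends only on $z_i$. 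So I would argue that, for each fixed $i$, the partial function $z_i\mapsto f(z_1,\dots,z_i,\dots,z_m)$ (with the remaining $z_j$ held fixed) lies in the single-variable space $W_{c_i}^{n_i}$, by the very definition of $W_{\mathbf c}^{\mathbf n}$: holomorphy in $z_i\neq 0$, symmetry $z_i\leftrightarrow 1/z_i$, and the quasi-periodicity $g(\dots,pz_i,\dots)=p^{-k_i}z_i^{-2k_i}g(\dots,z_i,\dots)$ are exactly the hypotheses needed to invoke Theorem~\ref{thm:Dn} in the variable $z_i$.

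Concretely, I would proceed by induction on $m$, or equivalently by peeling off one operator at a time. Apply Theorem~\ref{thm:Dn} (the single-variable result) to compute
\begin{gather*}
\mathcal{D}_{c_m,q,p;z_m}^{(n_m)}f(z_1,\dots,z_m)
= (-2z_m)^{n_m}q^{\frac{n_m(3-n_m)}{4}}
\frac{\big(c_m q^{\frac{n_m}{2}-1}z_m, c_m q^{\frac{n_m}{2}-1}/z_m;q,p\big)_{n_m+1}}
{(\theta(q;p))^{n_m}} \\
\quad\times\sum_{k_m=0}^{n_m} q^{k_m(n_m-k_m)}
\begin{bmatrix}n_m\\k_m\end{bmatrix}_{p,q}
\frac{z_m^{2(k_m-n_m)}\big(c_m q^{\frac{n_m}{2}-k_m}z_m,
c_mq^{-\frac{n_m}{2}+k_m}/z_m;q,p\big)_{n_m-1}}
{\big(q^{n_m-2k_m+1}z_m^2;q,p\big)_{k_m}
\big(q^{2k_m-n_m+1}z_m^{-2};q,p\big)_{n_m-k_m}} \\
\quad\times f\big(z_1,\dots,z_{m-1},q^{\frac{n_m}{2}-k_m}z_m\big),
\end{gather*}
valid because the partial function in $z_m$ is in $W_{c_m}^{n_m}$. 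Now the right-hand side is a $\C$-linear combination (with coefficients that are functions of $z_m$ alone) of the functions $z\mapsto f(z_1,\dots,z_{m-1},w)$ where $w=q^{n_m/2-k_m}z_m$ is a fixed nonzero scalar multiple of $z_m$; the point is that for each such $w$ this is again a function in $W_{c_1,\dots,c_{m-1}}^{n_1,\dots,n_{m-1}}$ of the variables $(z_1,\dots,z_{m-1})$. Since $\mathcal{D}_{c_1,q,p;z_1}^{(n_1)}\cdots\mathcal{D}_{c_{m-1},q,p;z_{m-1}}^{(n_{m-1})}$ acts only in $z_1,\dots,z_{m-1}$ and is linear, I can pull it through the sum over $k_m$ and through the $z_m$-dependent coefficients, and apply the inductive hypothesis in the remaining $m-1$ variables. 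Multiplying the resulting products of prefactors and sums gives exactly the stated formula.

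The main thing to verify carefully --- and the only real obstacle --- is the commutativity/well-definedness point: that the composition order in $\mathcal{D}_{\mathbf c,q,p;\mathbf z}^{(\mathbf k)}=\mathcal{D}_{c_1,q,p;z_1}^{(k_1)}\cdots\mathcal{D}_{c_m,q,p;z_m}^{(k_m)}$ does not matter, and that each intermediate function still lies in the appropriate (shifted) space so that the next operator in line is legitimately applicable. Commutativity is clear because $\mathcal{D}_{c_i,q,p;z_i}$ and $\mathcal{D}_{c_j,q,p;z_j}$ act on disjoint sets of variables and the prefactor of each depends only on its own variable; hence they commute as operators. That the partial functions remain in the correct spaces after applying some of the operators follows from the single-variable remark already recorded in the excerpt (the elliptic Askey--Wilson operator maps $W_c^n$ to $W_{cq^{3/2}}^{n-1}$), applied coordinate-wise. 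Once these bookkeeping points are settled the result is immediate, so I would state the argument crisply: observe the operator factorizes into commuting single-variable operators, invoke Theorem~\ref{thm:Dn} in each variable in turn while treating the others as parameters, and collect terms. I do not expect any genuinely new computation beyond what Theorem~\ref{thm:Dn} already supplies.
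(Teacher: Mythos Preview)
Your proposal is correct and follows exactly the paper's approach: the paper's proof consists of the single sentence ``The theorem follows by applying Theorem~\ref{thm:Dn} successively for each $i=1,\dots,m$,'' which is precisely the variable-by-variable reduction you describe. Your additional remarks about commutativity and the partial functions lying in the appropriate spaces are the natural bookkeeping behind that sentence.
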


\begin{proof}
The theorem follows by applying Theorem~\ref{thm:Dn} successively for each
$i=1,\dots, m$.
\end{proof}

We combine Theorems~\ref{thm:multvTaylor} and \ref{thm:multvDn}
to obtain the following multivariable elliptic interpolation formula.

\begin{Theorem}\label{thm:multvid}
For $f(z_1,\dots, z_m)$ in $W_{\mathbf c}^{\mathbf n}$, we have
\begin{gather*}
\prod_{i=1}^m \frac{\big(a_i ^2 q, q, c_i z_i , c_i/z_i;q,p\big)_{n_i}}
{(a_i c_i,c_i/a_i, a_i qz_i, a_iq/z_i;q,p)_{n_i}} f(z_1,\dots, z_m)
 \\
\qquad {}=\sum_{k_1,\dots,k_m =0}^{n_1,\dots,n_m}\prod_{i=1}^m q^{k_i}
\frac{\theta\big(a_i^2 q^{2k_i};p\big)}
{\theta\big(a_i^2;p\big)} \frac{\big(q^{-n_i},a_i ^2, a_i q/c_i, a_i c_i q^{n_i}, a_i z_i ,
a_i/z_i;q,p\big)_{k_i}}{\big(q,a_i^2 q^{n_i+1}, a_i c_i, a_i q^{1-n_i}/c_i , a_i q z_i,
a_i q/z_i;q,p\big)_{k_i}} \\
\qquad\qquad{}\times
f\big(a_1 q^{k_1},\dots, a_m q^{k_m}\big).
\end{gather*}
\end{Theorem}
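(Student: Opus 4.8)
The plan is to obtain Theorem~\ref{thm:multvid} as a direct consequence of the single-variable interpolation formula in Theorem~\ref{thm:id}, applied coordinate by coordinate. The key structural observation is that the multivariable elliptic Askey--Wilson operator $\mathcal{D}_{\mathbf c,q,p;\mathbf z}^{(\mathbf k)}$ factors as a product $\mathcal{D}_{c_1,q,p;z_1}^{(k_1)}\cdots\mathcal{D}_{c_m,q,p;z_m}^{(k_m)}$ of operators acting in disjoint variables, so that the multivariable Taylor expansion of Theorem~\ref{thm:multvTaylor} and the multivariable operator identity of Theorem~\ref{thm:multvDn} are each simply the $m$-fold iterate of their single-variable counterparts. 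This strongly suggests that the whole derivation of Theorem~\ref{thm:id} from Theorems~\ref{thm:Taylor} and~\ref{thm:Dn} carries over verbatim, one variable at a time.

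Concretely, the first step would be to fix all variables $z_2,\dots,z_m$ and view $f(z_1,\dots,z_m)$ as a function of $z_1$ alone; since $f\in W_{\mathbf c}^{\mathbf n}$, for generic fixed values of the other variables this restricted function lies in $W_{c_1}^{n_1}$ (the spanning functions $g_{\mathbf k}$, being holomorphic in $z_1\neq 0$, symmetric in $z_1\leftrightarrow 1/z_1$, and satisfying the required quasi-periodicity in $z_1$, restrict to admissible numerators $g_{k_1}$). Applying Theorem~\ref{thm:id} in the variable $z_1$ with parameter $a_1$ then yields
\begin{gather*}
\frac{\big(a_1^2 q, q, c_1 z_1, c_1/z_1;q,p\big)_{n_1}}{(a_1 c_1, c_1/a_1, a_1 q z_1, a_1 q/z_1;q,p)_{n_1}}\, f(z_1,\dots,z_m)\\
\qquad{}= \sum_{k_1=0}^{n_1} q^{k_1}\frac{\theta\big(a_1^2 q^{2k_1};p\big)}{\theta(a_1^2;p)}
\frac{\big(q^{-n_1}, a_1^2, a_1 q/c_1, a_1 c_1 q^{n_1}, a_1 z_1, a_1/z_1;q,p\big)_{k_1}}
{\big(q, a_1^2 q^{n_1+1}, a_1 c_1, a_1 q^{1-n_1}/c_1, a_1 q z_1, a_1 q/z_1;q,p\big)_{k_1}}\, f\big(a_1 q^{k_1}, z_2,\dots,z_m\big).
\end{gather*}
Each term $f(a_1 q^{k_1}, z_2,\dots,z_m)$ on the right, as a function of $z_2$, again belongs to $W_{c_2}^{n_2}$, so one repeats the argument in $z_2$ with parameter $a_2$, then in $z_3$, and so on. After $m$ iterations one collects the product over $i$ of the normalizing prefactors on the left and the product over $i$ of the summation kernels on the right, arriving precisely at the stated formula.

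The main point requiring care — the ``hard part,'' such as it is — is the interchange of the successive summations and the verification that the intermediate functions genuinely lie in the single-variable spaces $W_{c_i}^{n_i}$ at each stage, i.e.\ that applying Theorem~\ref{thm:id} in $z_1$ does not destroy the structural hypotheses needed to apply it next in $z_2$. This is guaranteed because evaluation $z_1\mapsto a_1 q^{k_1}$ is a substitution that commutes with everything happening in the remaining variables, and the right-hand side is a finite $\C$-linear combination (with coefficients depending only on $z_1$ and the parameters) of the functions $f(a_1 q^{k_1}, z_2,\dots,z_m)$, each of which inherits holomorphy, $z_i\leftrightarrow 1/z_i$ symmetry, and the quasi-periodicity in $z_2,\dots,z_m$ from $f$. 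One also needs the genericity assumptions on the $a_i$ (as in Lemma~\ref{Lem}) so that the bases are genuine bases and the coefficients in Theorem~\ref{thm:id} are well defined; the final identity, being an identity of meromorphic functions in the $a_i$, then extends to all admissible values by analytic continuation. No new computation beyond the single-variable case is needed, so beyond bookkeeping the proof is short.
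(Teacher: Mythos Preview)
Your argument is correct, but it takes a genuinely different route from the paper's. The paper obtains Theorem~\ref{thm:multvid} by combining the multivariable Taylor expansion (Theorem~\ref{thm:multvTaylor}) with the multivariable iterated-operator formula (Theorem~\ref{thm:multvDn}), in effect rerunning the derivation of the single-variable interpolation formula (Theorem~\ref{thm:id}) with an $m$-fold product structure imposed throughout; the index shift and the application of the Frenkel--Turaev ${}_{10}V_9$ summation then decouple into $m$ independent instances. You instead bypass Theorems~\ref{thm:multvTaylor} and~\ref{thm:multvDn} entirely and iterate the finished single-variable result of Theorem~\ref{thm:id} one coordinate at a time. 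Your approach is slightly more economical: it requires no new computation, only the observation that specializing $z_1\mapsto a_1 q^{k_1}$ preserves membership in $W_{c_2}^{n_2}$ as a function of $z_2$, which you verify. The paper's approach, on the other hand, makes the parallel product structure explicit and shows that the multivariable statement is not merely a sequential consequence of the univariate one but literally the tensor product of $m$ copies of that derivation. Either way the bookkeeping is the same, and your care with the genericity of the~$a_i$ and the analytic-continuation remark is appropriate.
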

This theorem extends a result given by Ismail and
Stanton~\cite[Theorem~3.10]{IS2}, which can be obtained by taking
$m=2$, $p\to 0$, $c_1=c_2=0$ and $n_1=n_2=n$.

\begin{Corollary}\label{cor:multvex2}
We have the following multivariable elliptic
Karlsson--Minton type identity
\begin{gather*}
\prod_{i=1}^m \left(\frac{\big(a_i^2 q, q;q,p\big)_{n_i}}
{(a_iqz_i,a_i q/z_i;q,p)_{n_i}}
\prod_{j=1}^{s_i}\frac{(b_{ij}z_i,b_{ij}/z_i;q,p)_{v_{ij}}}
{(b_{ij}a_i,b_{ij}/a_i;q,p)_{v_{ij}}}\right)
\prod_{1\le i<j \le m}\Bigg(a_i^{w_{ij}}z_i^{-w_{ij}}
\theta(z_i z_j,z_i/z_j;p)^{w_{ij}}\\
\qquad{}\times
\prod_{l_{ij}=1}^{r_{ij}}
\frac{(\alpha_{l_{ij}} z_i z_j,\alpha_{l_{ij}} z_i/z_j,
\alpha_{l_{ij}} z_j/z_i,\alpha_{l_{ij}} /z_i z_j;q,p)_{u_{lij}}}
{(\alpha_{l_{ij}} a_i a_j ,\alpha_{l_{ij}} a_i/a_j,
\alpha_{l_{ij}} a_j/a_i ,\alpha_{l_{ij}}/a_i a_j;q,p)_{u_{lij}}}
\Bigg) \\
=\sum_{k_1,\dots, k_m=0}^{n_1,\dots,n_m}
\prod_{i=1}^m \Bigg(q^{k_i}
\frac{\theta\big(a_i^2 q^{2k_i};p\big)}{\theta\big(a_i ^2;p\big)}
\frac{\big(q^{-n_i}, a_i^2, a_i z_i, a_i/z_i;q,p\big)_{k_i}}
{\big(q,a_i^2 q^{n_i+1}, a_i q z_i,
a_i q/z_i;q,p\big)_{k_i}}\\
\qquad{}\times
\prod_{j=1}^{s_i}
\frac{\big(a_i b_{ij} q^{v_{ij}},a_i q/b_{ij};q,p\big)_{k_i}}
{\big(a_i b_{ij},a_i q^{1-v_{ij}}/b_{ij};q,p\big)_{k_i}}
\Bigg)\\
\qquad{} \times
\prod_{1\le i<j \le m}
\prod_{l_{ij}=1}^{r_{ij}}
q^{-2u_{l_{ij}}k_i}\\
\qquad{}\times
\frac{\big(\alpha_{l_{ij}} a_i a_j q^{u_{l_{ij}}},qa_ia_j/\alpha_{l_{ij}};q,p\big)_{k_i+k_j}
\big(\alpha_{l_{ij}} a_i q^{u_{l_{ij}}}/a_j,q a_i/a_j\alpha_{l_{ij}};q,p\big)_{k_i-k_j}}
{\big(\alpha_{l_{ij}} a_i a_j,q^{1-u_{l_{ij}}}a_ia_j/\alpha_{l_{ij}};q,p\big)_{k_i+k_j}
\big(\alpha_{l_{ij}} a_i/a_j,q^{1-u_{l_{ij}}}a_i/a_j\alpha_{l_{ij}};q,p\big)_{k_i-k_j}}
\\
\times
\prod_{1\le i<j \le m}q^{-w_{ij}k_i}
\theta\big(a_i a_j q^{k_i+k_j},a_iq^{k_i-k_j}/a_j;p\big)^{w_{ij}},
\end{gather*}
where
\begin{gather*}
n_i=\sum_{j=1}^{s_i} v_{ij}+
\sum_{j=1}^{i-1}w_{ji}+\sum_{j=i+1}^mw_{ij}
+2\sum_{l=1}^{r_{ij}}
\left(\sum_{j=1}^{i-1}u_{l_{ji}}+\sum_{j=i+1}^mu_{l_{ij}}\right),
\end{gather*}
for $i=1,\dots,m$.
\end{Corollary}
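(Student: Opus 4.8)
The plan is to apply the multivariable elliptic interpolation formula, Theorem~\ref{thm:multvid}, to the function
\begin{gather*}
f(z_1,\dots,z_m)=\frac{1}{\prod\limits_{i=1}^m(c_iz_i,c_i/z_i;q,p)_{n_i}}\,\prod_{i=1}^m\prod_{j=1}^{s_i}(b_{ij}z_i,b_{ij}/z_i;q,p)_{v_{ij}}\\
\times\prod_{1\le i<j\le m}\left(a_i^{w_{ij}}z_i^{-w_{ij}}\theta(z_iz_j,z_i/z_j;p)^{w_{ij}}\prod_{l_{ij}=1}^{r_{ij}}(\alpha_{l_{ij}}z_iz_j,\alpha_{l_{ij}}z_i/z_j,\alpha_{l_{ij}}z_j/z_i,\alpha_{l_{ij}}/z_iz_j;q,p)_{u_{l_{ij}}}\right),
\end{gather*}
that is, to the product of theta-shifted factorials in the numerator of the left-hand side of the asserted identity, divided by $\prod_{i=1}^m(c_iz_i,c_i/z_i;q,p)_{n_i}$ in place of the ($\mathbf a$-dependent, constant) denominators; the parameters $c_1,\dots,c_m$ are auxiliary and will drop out at the end.

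First I would check that $f\in W_{\mathbf c}^{\mathbf n}$ with the $n_i$ as stated. Using only the reflection formula \eqref{tif} and the quasi-periodicity \eqref{p1id} of $\theta(\,\cdot\,;p)$, one verifies that, as a function of a single variable, $(b_{ij}z_i,b_{ij}/z_i;q,p)_{v_{ij}}$ is a $D_{v_{ij}}$ theta function in $z_i$; $a_i^{w_{ij}}z_i^{-w_{ij}}\theta(z_iz_j,z_i/z_j;p)^{w_{ij}}$ (for $i<j$) is a $D_{w_{ij}}$ theta function both in $z_i$ and in $z_j$ — the monomial $z_i^{-w_{ij}}$ is precisely what restores the symmetry in $z_i\leftrightarrow 1/z_i$ and the correct quasi-period; and $(\alpha_{l_{ij}}z_iz_j,\alpha_{l_{ij}}z_i/z_j,\alpha_{l_{ij}}z_j/z_i,\alpha_{l_{ij}}/z_iz_j;q,p)_{u_{l_{ij}}}$ is a $D_{2u_{l_{ij}}}$ theta function both in $z_i$ and in $z_j$. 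Multiplying these, and noting that the numerator of $f$ is holomorphic for $z_1,\dots,z_m\neq 0$ and symmetric in each $z_i\leftrightarrow 1/z_i$, we conclude that $f\in W_{\mathbf c}^{\mathbf n}$ with $n_i$ exactly the displayed sum of contributions $v_{ij}$, $w_{ij}$ (resp.\ $w_{ji}$) and $2u_{l_{ij}}$ (resp.\ $2u_{l_{ji}}$).

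Next I would substitute $f$ into Theorem~\ref{thm:multvid}. On the left-hand side the factor $\prod_i(c_iz_i,c_i/z_i;q,p)_{n_i}$ cancels the denominator of $f$. On the right-hand side I would evaluate $f$ at $(a_1q^{k_1},\dots,a_mq^{k_m})$ factor by factor and rewrite each evaluated theta-shifted factorial by the shift and reflection identities of \cite[Section~11.2]{GR}: $(b_{ij}z_i,b_{ij}/z_i;q,p)_{v_{ij}}|_{z_i=a_iq^{k_i}}$ splits off the constant $(a_ib_{ij},b_{ij}/a_i;q,p)_{v_{ij}}$ and the factor $\frac{(a_ib_{ij}q^{v_{ij}},a_iq/b_{ij};q,p)_{k_i}}{(a_ib_{ij},a_iq^{1-v_{ij}}/b_{ij};q,p)_{k_i}}$ up to a power of $q$; $a_i^{w_{ij}}z_i^{-w_{ij}}\theta(z_iz_j,z_i/z_j;p)^{w_{ij}}$ becomes $q^{-w_{ij}k_i}\theta(a_ia_jq^{k_i+k_j},a_iq^{k_i-k_j}/a_j;p)^{w_{ij}}$; the $\alpha_{l_{ij}}$-factors split off the constant $(\alpha_{l_{ij}}a_ia_j,\alpha_{l_{ij}}a_i/a_j,\alpha_{l_{ij}}a_j/a_i,\alpha_{l_{ij}}/a_ia_j;q,p)_{u_{l_{ij}}}$ and the $(k_i\pm k_j)$-indexed quotient displayed in the statement (with its power $q^{-2u_{l_{ij}}k_i}$); and the evaluation of $(c_iz_i,c_i/z_i;q,p)_{n_i}$ combines, exactly as in the proofs of Corollaries~\ref{cor:ex1} and \ref{cor:ex2}, with the interpolation-coefficient factors $\frac{(a_iq/c_i,a_ic_iq^{n_i};q,p)_{k_i}}{(a_ic_i,a_iq^{1-n_i}/c_i;q,p)_{k_i}}$ and the prefactor $(a_ic_i,c_i/a_i;q,p)_{n_i}^{-1}$ so that all $c_i$-dependence cancels. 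Moving the remaining $\mathbf k$-independent constants to the left-hand side — there they become the denominators $(b_{ij}a_i,b_{ij}/a_i;q,p)_{v_{ij}}$ and $(\alpha_{l_{ij}}a_ia_j,\dots;q,p)_{u_{l_{ij}}}$ — and regrouping the accumulated powers of $q$ then gives the stated identity; when all $w_{ij}$ and $r_{ij}$ vanish the result factorizes as an $m$-fold product of one-variable identities of the type of Corollary~\ref{cor:ex1}.

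I expect the main obstacle to be exactly this last regrouping: keeping precise track of the powers of $q$ that the reflection identities produce from the $c_i$-factors and from the $\alpha_{l_{ij}}$-factors, together with the $q^{-w_{ij}k_i}$ coming from the monomials $z_i^{-w_{ij}}$ and the powers $q^{k_in_i}$ that survive the $c_i$-cancellation, and checking that they collapse to precisely the powers $q^{k_i}$, $q^{-w_{ij}k_i}$ and $q^{-2u_{l_{ij}}k_i}$ appearing in the statement. The degree count underlying $f\in W_{\mathbf c}^{\mathbf n}$ (that the numerator has order exactly $2n_i$ in each $z_i$) is the other place where care is needed, but it is a routine consequence of \eqref{tif} and \eqref{p1id}.
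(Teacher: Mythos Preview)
Your approach is exactly the one in the paper: apply Theorem~\ref{thm:multvid} to the same function $f(z_1,\dots,z_m)$ (your extra constant factor $a_i^{w_{ij}}$ is immaterial), and the paper's proof is in fact nothing more than the one-line specification of this $f$. The additional detail you provide --- the verification that $f\in W_{\mathbf c}^{\mathbf n}$ with the stated $n_i$, the factor-by-factor evaluation at $z_i=a_iq^{k_i}$, and the cancellation of the $c_i$-dependence --- is left implicit in the paper, so your write-up is a strict elaboration of the same argument.
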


\begin{proof}
We apply Theorem \ref{thm:multvid} to
\begin{gather*}
f(z_1,\dots, z_m)= \prod_{i=1}^m
\frac{\prod\limits_{j=1}^{s_i} (b_{ij} z_i , b_{ij}/z_i;q,p)_{v_{ij}}}
{(c_i z_i,c_i/z_i;q,p)_{n_i}}
\prod_{1\le i<j \le m}z_i^{-w_{ij}}
\theta(z_i z_j,z_i/z_j;p)^{w_{ij}}\\
\hphantom{f(z_1,\dots, z_m)=}{} \times \prod_{1\le i<j \le m}
\prod_{l_{ij}=1}^{r_{ij}}(\alpha_{l_{ij}} z_i z_j,
\alpha_{l_{ij}} z_i/z_j,\alpha_{l_{ij}} z_j/z_i,
\alpha_{l_{ij}}/z_iz_j;q,p)_{u_{l_{ij}}},
\end{gather*}
where
\begin{gather*}
n_i=\sum_{j=1}^{s_i} v_{ij}+
\sum_{j=1}^{i-1}w_{ji}+\sum_{j=i+1}^mw_{ij}
+2\sum_{j=1}^{i-1}
\sum_{l_{ji}=1}^{r_{ji}}u_{l_{ji}}+2\sum_{j=i+1}^m
\sum_{l_{ij}=1}^{r_{ij}}u_{l_{ij}},
\end{gather*}
for $i=1,\dots,m$.
\end{proof}

Corollary~\ref{cor:multvex2} extends a result by Ismail and Stanton
(see \cite[Corollary 3.11]{IS2}), corresponding to a special case
of its $m=2$ instance.

More generally, $f(z_1,\dots,z_m)$ could involve symmetrized
products of $2^k$ factors
of the form
$(\lambda z_{i_1}^\pm z_{i_2}^\pm\cdots z_{i_k}^\pm;q,p)_y$
(the notation $z_{i_j}^\pm$ means that
the resprective variable could appear as~$z_{i_j}$ or~$z_{i_j}^{-1}$,
where all possible combinations appear),
where $\{i_1,\dots,i_k\}$ is any subset of $\{1,\dots,n\}$.
(In the corollary, we only considered factors for~$k=1,2$.)

Corollary \ref{cor:multvex2} can be easily seen to be
equivalent to its $u_{l_{ij}}=1$ and $v_{ij}= 1$ case,
for all~$i$,~$j$, in which case the respective factorials reduce
to simple theta functions. To recover the general case
from this special case one can suitably increase~$r_{ij}$ and
$s_1,\dots,s_m$ and choose the parameters partially
in geometric progression to obtain shifted factorials.
In particular, we can replace~$r_{ij}$ by $u_1+\dots+u_{r_{ij}}$
and relabel $\alpha_{u_1+\dots+u_{l_{ij}-1}+h}\mapsto
\alpha_{l_{ij}}q^{h-1}$,
for all $1\le l_{ij}\le r_{ij}$, $1\le h\le u_{l_{ij}}$, etc.
(One could even add extra bases, in addition to $q$.
This feature is typical for series of Karlsson--Minton type.)

For convenience, we restate the corollary in this equivalent form.

\begin{Corollary}\label{cor:multvex3}
We have the following multivariable elliptic Karlsson--Minton type identity
\begin{gather*}
\prod_{i=1}^m \left(\frac{(a_i^2 q, q;q,p)_{n_i}}
{(a_iqz_i,a_i q/z_i;q,p)_{n_i}}
\prod_{j=1}^{s_i}\frac{\theta(b_{ij}z_i,b_{ij}/z_i;p)}
{\theta(b_{ij}a_i,b_{ij}/a_i;p)}\right)\\
{}\times
\prod_{1\le i<j \le m}\left(a_i^{w_{ij}}z_i^{-w_{ij}}
\theta(z_i z_j,z_i/z_j;p)^{w_{ij}}
\prod_{l_{ij}=1}^{r_{ij}}
\frac{\theta(\alpha_{l_{ij}} z_i z_j,\alpha_{l_{ij}} z_i/z_j,
\alpha_{l_{ij}} z_j/z_i,\alpha_{l_{ij}} /z_i z_j;p)}
{\theta(\alpha_{l_{ij}} a_i a_j ,\alpha_{l_{ij}} a_i/a_j,
\alpha_{l_{ij}} a_j/a_i ,\alpha_{l_{ij}}/a_i a_j;p)}
\right)\\
=\sum_{k_1,\dots, k_m=0}^{n_1,\dots,n_m}
\prod_{i=1}^m \!\left(q^{k_i}
\frac{\theta(a_i^2 q^{2k_i};p)}{\theta(a_i ^2;p)}
\frac{(q^{-n_i}, a_i^2, a_i z_i, a_i/z_i;q,p)_{k_i}}
{(q,a_i^2 q^{n_i+1}, a_i q z_i,
a_i q/z_i;q,p)_{k_i}}
\prod_{j=1}^{s_i}\!
\frac{\theta(a_i b_{ij} q^{k_i},a_i q^{k_i}/b_{ij};p)}
{\theta(a_i b_{ij},a_i/b_{ij};p)}
\right)\\
{}\times
\prod_{1\le i<j \le m}
\prod_{l_{ij}=1}^{r_{ij}}
q^{-2k_i}
\frac{\theta\big(\alpha_{l_{ij}} a_i a_j q^{k_i+k_j},
q^{k_i+k_j}a_ia_j/\alpha_{l_{ij}},\alpha_{l_{ij}} a_i q^{k_i-k_j}/a_j,
q^{k_i-k_j} a_i/a_j\alpha_{l_{ij}};p\big)}
{\theta(\alpha_{l_{ij}} a_i a_j,a_ia_j/\alpha_{l_{ij}},
\alpha_{l_{ij}} a_i/a_j,a_i/a_j\alpha_{l_{ij}};p)}
\\
{}\times
\prod_{1\le i<j \le m}q^{-w_{ij}k_i}
\theta\big(a_i a_j q^{k_i+k_j},a_iq^{k_i-k_j}/a_j;p\big)^{w_{ij}},
\end{gather*}
where
\begin{gather*}
n_i=s_i+
\sum_{j=1}^{i-1}w_{ji}+\sum_{j=i+1}^mw_{ij}
+2\sum_{j=1}^{i-1}r_{ji}+2\sum_{j=i+1}^mr_{ij},
\end{gather*}
for $i=1,\dots,m$.
\end{Corollary}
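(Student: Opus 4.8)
The plan is to obtain this identity as the special case $u_{l_{ij}}=v_{ij}=1$ (for all $i$ and $j$) of Corollary~\ref{cor:multvex2}, which the paragraph preceding the statement already flags as equivalent. First I would check the left-hand side: under $v_{ij}=1$ each theta-shifted factorial $(b_{ij}z_i,b_{ij}/z_i;q,p)_{1}$ equals the theta product $\theta(b_{ij}z_i,b_{ij}/z_i;p)$, and under $u_{l_{ij}}=1$ each factor $(\alpha_{l_{ij}}z_iz_j,\alpha_{l_{ij}}z_i/z_j,\alpha_{l_{ij}}z_j/z_i,\alpha_{l_{ij}}/z_iz_j;q,p)_{1}$ equals the corresponding four-fold theta product; hence the left-hand side of Corollary~\ref{cor:multvex2} collapses to the left-hand side stated here, and its constraint on $n_i$ degenerates to the present one because $\sum_j v_{ij}=s_i$ and $\sum_{l}u_{l_{ji}}=r_{ji}$.

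Next I would simplify the right-hand side of Corollary~\ref{cor:multvex2} at $u_{l_{ij}}=v_{ij}=1$; the only computation needed is the telescoping relation $(aq;q,p)_{k}=\theta(aq^{k};p)(a;q,p)_{k}/\theta(a;p)$. Applied to $(a_ib_{ij}q^{v_{ij}},a_iq/b_{ij};q,p)_{k_i}\big/(a_ib_{ij},a_iq^{1-v_{ij}}/b_{ij};q,p)_{k_i}$ at $v_{ij}=1$ it produces $\theta(a_ib_{ij}q^{k_i},a_iq^{k_i}/b_{ij};p)\big/\theta(a_ib_{ij},a_i/b_{ij};p)$, and applied to the four factorials of lengths $k_i\pm k_j$ that carry a $q^{u_{l_{ij}}}$ in their top parameter it collapses the whole $\prod_{l_{ij}}$-block (with $q^{-2u_{l_{ij}}k_i}$ becoming $q^{-2k_i}$) into the single theta quotient displayed in the statement. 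No remaining summand factor depends on $u$ or $v$, so the two identities agree and the proof is complete.

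The real content, which I would not reprove here, is Corollary~\ref{cor:multvex2} itself, established by applying the multivariable interpolation formula Theorem~\ref{thm:multvid} to the function
\begin{gather*}
f(z_1,\dots,z_m)=\prod_{i=1}^m\frac{\prod_{j=1}^{s_i}\theta(b_{ij}z_i,b_{ij}/z_i;p)}{(c_iz_i,c_i/z_i;q,p)_{n_i}}\prod_{1\le i<j\le m}z_i^{-w_{ij}}\theta(z_iz_j,z_i/z_j;p)^{w_{ij}}\\
\hphantom{f(z_1,\dots,z_m)=}{}\times\prod_{1\le i<j\le m}\prod_{l_{ij}=1}^{r_{ij}}\theta\big(\alpha_{l_{ij}}z_iz_j,\alpha_{l_{ij}}z_i/z_j,\alpha_{l_{ij}}z_j/z_i,\alpha_{l_{ij}}/z_iz_j;p\big)
\end{gather*}
(with the single theta factors replaced by theta-shifted factorials of lengths $v_{ij}$ and $u_{l_{ij}}$ in the general case). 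Here one first uses \eqref{tif} and \eqref{p1id} to see that $f\in W_{\mathbf c}^{\mathbf n}$ with $\mathbf n$ as stated: for each fixed $i$ the numerator is holomorphic for $z_i\ne0$, symmetric in $z_i\leftrightarrow1/z_i$, and multiplied by $p^{-n_i}z_i^{-2n_i}$ under $z_i\mapsto pz_i$, the degree in $z_i$ being $1$ for each $\theta(b_{ij}z_i,b_{ij}/z_i;p)$, $1$ for each $z_i^{-1}\theta(z_iz_j,z_i/z_j;p)$, and $2$ for each four-fold theta product, and these add up to $n_i$. The main obstacle is what comes after: substituting $f$ into Theorem~\ref{thm:multvid}, the $\prod_i(c_iz_i,c_i/z_i;q,p)_{n_i}$ in the denominator of $f$ cancels the matching left-hand factor, $f(a_1q^{k_1},\dots,a_mq^{k_m})$ supplies $\prod_i(c_ia_iq^{k_i},c_iq^{-k_i}/a_i;q,p)_{n_i}^{-1}$, and one must verify that all dependence on $c_1,\dots,c_m$ cancels among these pieces, the left-hand factor $\prod_i(a_ic_i,c_i/a_i;q,p)_{n_i}^{-1}$, and the summand factors $(a_iq/c_i,a_ic_iq^{n_i};q,p)_{k_i}\big/(a_ic_i,a_iq^{1-n_i}/c_i;q,p)_{k_i}$ — a lengthy but elementary reindexing using the $q,p$-shifted-factorial identities of \cite[Section~11.2]{GR} and the reflection $(pa;q,p)_n=(-1)^na^{-n}q^{-\binom n2}(a;q,p)_n$, after which the numerator of $f$ at $z_i=a_iq^{k_i}$ is rewritten via \eqref{tif} and \eqref{p1id} into the form appearing on the right.
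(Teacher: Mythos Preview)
Your proposal is correct and follows the paper's own route: the corollary is stated there explicitly as the $u_{l_{ij}}=v_{ij}=1$ specialization of Corollary~\ref{cor:multvex2}, which in turn is proved by applying Theorem~\ref{thm:multvid} to precisely the function you display (in its shifted-factorial form). Your added detail on the telescoping of the right-hand side and the cancellation of the~$c_i$'s is accurate and goes beyond what the paper spells out.
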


\subsection{A quadratic elliptic Taylor expansion theorem}
In \cite{IS}
Ismail and Stanton also considered the basis
$\{\phi_k (z),\, 0\le k\le n\}$
where $\phi_k (z)=\big(q^{1/4}z, q^{1/4}/z;q^{1/2}\big)_k$. The set
\begin{gather*}
\left\{ \frac{\big(q^{1/4}z, q^{1/4}/z;q^{1/2},p\big)_k}
{(cz,c/z;q,p)_k},\, 0\le k\le n \right\}
\end{gather*}
apparently forms a basis for $W_c ^n$. We now provide a Taylor expansion
theorem with respect to this basis.

\begin{Theorem}\label{thm:Taylor2}
If $f$ is in $W_c ^n$, then
\begin{gather}
f(z)= \sum_{k=0}^{n} f_k \frac{\big(q^{1/4}z, q^{1/4}/z;q^{1/2},p\big)_k}
{(cz,c/z;q,p)_k},\label{eqn:Taylor2}
\end{gather}
where
\begin{gather*}
f_k = \frac{(-1)^k q^{-k/4}\theta (q;p)^k}
{2^k (q;q,p)_k \big(c q^{\frac{k}{2}-\frac{3}{4}};q^{1/2},p\big)_{2k}}
\big[\mathcal{D}_{c,q,p}^{(k)}f(z) \big]_{z=q^{1/4}}.
\end{gather*}
\end{Theorem}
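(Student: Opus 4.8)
The strategy is to imitate the proof of Theorem~\ref{thm:Taylor}. Write $\psi_k(z):=(q^{1/4}z,q^{1/4}/z;q^{1/2},p)_k/(cz,c/z;q,p)_k$ for $0\le k\le n$. One first needs that $\{\psi_k:0\le k\le n\}$ is a basis of $W_c^n$; this can be checked exactly as in Lemma~\ref{Lem}, or, more economically, it drops out of the coefficient formula derived below (which forces $\sum_k f_k\psi_k=0\Rightarrow f_k\equiv 0$, so the $n+1$ functions $\psi_k$ are linearly independent and hence, as $\dim W_c^n=n+1$, a basis). Having written $f=\sum_{k=0}^n f_k\psi_k$, it remains to identify the coefficients, and to extract $f_m$ I would apply $\mathcal D_{c,q,p}^{(m)}$ to both sides and then set $z=q^{1/4}$.

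Two observations make the right-hand side collapse to a single term. First, $\mathcal D_{c,q,p}$ sends constants to $0$ and maps $W_c^n$ to $W_{cq^{3/2}}^{n-1}$ (see~\eqref{defdf},~\eqref{degl}), so $\mathcal D_{c,q,p}^{(m)}$ annihilates $W_c^{m-1}$ and only the terms with $k\ge m$ remain: $\mathcal D_{c,q,p}^{(m)}f=\sum_{j=m}^{n}f_j\,\mathcal D_{c,q,p}^{(m)}\psi_j$. Second, I would evaluate each $\mathcal D_{c,q,p}^{(m)}\psi_j$ at $z=q^{1/4}$ by means of the explicit formula of Theorem~\ref{thm:Dn}. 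In that formula $\psi_j$ is sampled at the $m+1$ points $q^{m/2-l+1/4}$ ($l=0,\dots,m$), and substituting $w=q^{m/2-l+1/4}$ turns the numerator of $\psi_j$ into $\prod_{r=0}^{j-1}\theta(q^{m/2-l+1/2+r/2};p)\,\prod_{r=0}^{j-1}\theta(q^{l-m/2+r/2};p)$, which contains the vanishing factor $\theta(1;p)$ exactly when $l$ lies in $[\tfrac{m-j+1}{2},\tfrac{m}{2}]\cup[\tfrac{m+1}{2},\tfrac{m+j}{2}]$; since no integer lies in the gap between these two intervals, their union of integers is an interval, which for $j>m$ covers all of $\{0,\dots,m\}$ and for $j=m$ equals $\{1,\dots,m\}$. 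Hence $[\mathcal D_{c,q,p}^{(m)}\psi_j(z)]_{z=q^{1/4}}=0$ for $j>m$, while for $j=m$ only the $l=0$ summand of Theorem~\ref{thm:Dn} survives. (One checks in passing that the remaining ingredients of that formula — the prefactor, the factorials $(cq^{m/2-l}z,cq^{-m/2+l}/z;q,p)_{m-1}$, the denominators $(q^{m-2l+1}z^2;q,p)_l$ and $(q^{2l-m+1}z^{-2};q,p)_{m-l}$, and the denominator of $\psi_j$ — are nonzero at $z=q^{1/4}$ for generic $c$, their $\theta$-arguments being either $c$-dependent or half-integral powers of $q$, so that no spurious $0/0$ occurs.)

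Thus $[\mathcal D_{c,q,p}^{(m)}f(z)]_{z=q^{1/4}}=f_m\,[\mathcal D_{c,q,p}^{(m)}\psi_m(z)]_{z=q^{1/4}}$, where the constant on the right is just the $l=0$ term of Theorem~\ref{thm:Dn} at $z=q^{1/4}$ with $f=\psi_m$ — an explicit product of $q,p$-shifted factorials involving $c$ together with the factor $\psi_m(q^{m/2+1/4})=(q^{m/2+1/2},q^{-m/2};q^{1/2},p)_m/(cq^{m/2+1/4},cq^{-m/2-1/4};q,p)_m$. The final step is to simplify this constant to $(-1)^m 2^m q^{m/4}\,\theta(q;p)^{-m}\,(q;q,p)_m\,(cq^{m/2-3/4};q^{1/2},p)_{2m}$, whose reciprocal is precisely the prefactor occurring in the theorem (with $k=m$), so that rearrangement gives the asserted $f_m$. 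Here the $c$-dependent part telescopes: peeling $\theta(cq^{3m/2-3/4};p)$ and $\theta(cq^{m/2-5/4};p)$ off the prefactor $(cq^{m/2-3/4},cq^{m/2-5/4};q,p)_{m+1}$ and cancelling the common block with the denominator of $\psi_m$ leaves $(cq^{m/2-3/4};q,p)_m(cq^{m/2-1/4};q,p)_m$, whose interleaved arguments are exactly those of $(cq^{m/2-3/4};q^{1/2},p)_{2m}$; and the remaining $q$-part collapses by the reflection $\theta(x;p)=-x\theta(1/x;p)$ of~\eqref{tif} (used to turn $(q^{-m/2};q^{1/2},p)_m$ and $(q^{1/2-m};q,p)_m$ into $(q^{1/2};q^{1/2},p)_m$- and $(q^{1/2};q,p)_m$-type products) together with the elementary identity $(q;q,p)_m(q^{1/2};q,p)_m=(q^{1/2};q^{1/2},p)_{2m}$. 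This last bookkeeping — tracking the powers of $q$ and the signs produced by reflecting the $1/z$- and negative-argument factorials, and confirming that the $c$-part telescopes exactly — is the only real work; it is routine manipulation of the kind tabulated in~\cite[Section~11.2]{GR}, and the case $m=1$, where Theorem~\ref{thm:Dn} reduces to~\eqref{defdf} and a single application of the addition formula~\eqref{addf} gives $\mathcal D_{c,q,p}\psi_1\equiv-2q^{1/4}\theta(cq^{1/4},cq^{-1/4};p)$, already displays the mechanism.
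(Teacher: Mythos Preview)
Your overall scheme --- expand in the $\psi_k$, apply $\mathcal D_{c,q,p}^{(m)}$ to both sides, then evaluate at $z=q^{1/4}$ --- is exactly the paper's. The difference lies in how $\mathcal D_{c,q,p}^{(m)}\psi_j$ is handled. You route the computation through Theorem~\ref{thm:Dn}, analysing the $m+1$ sample values $\psi_j(q^{m/2-l+1/4})$ and arguing term-by-term which ones vanish; this is correct (the half-integral exponents you flag do indeed keep the denominators away from zero, and your interval argument for the numerator zeros is sound). The paper instead establishes directly, by a one-step induction analogous to~\eqref{degl}, the closed form
\[
\mathcal D_{c,q,p}^{(k)}\psi_n(z)=\frac{(-2)^k q^{k/4}\,(cq^{\frac n2-\frac34};q^{1/2},p)_{2k}\,(q;q,p)_n}{(q;q,p)_{n-k}\,\theta(q;p)^k}\cdot\frac{(q^{1/4}z,q^{1/4}/z;q^{1/2},p)_{n-k}}{(cq^{3k/2}z,cq^{3k/2}/z;q,p)_{n-k}},
\]
from which the vanishing at $z=q^{1/4}$ for $n>k$ (the factor $(1;q^{1/2},p)_{n-k}$ appears) and the value of the surviving constant for $n=k$ are read off instantly, with no further bookkeeping. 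Your approach pays the price of invoking the heavier Theorem~\ref{thm:Dn} plus a nontrivial simplification of the surviving $l=0$ term; the paper's route is self-contained, does not need Theorem~\ref{thm:Dn}, and makes the coefficient formula immediate.
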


\begin{proof}
Note that
\begin{gather*}
 \mathcal{D}_{c,q,p}^{(k)}\left( \frac{\big(q^{1/4}z, q^{1/4}/z;q^{1/2},p\big)_n}
{(cz,c/z;q,p)_n}\right)\notag\\
 \qquad{}=\frac{(-2)^k q^{k/4}\big(c q^{\frac{n}{2}-\frac{3}{4}};q^{1/2},p\big)_{2k}(q;q,p)_n}
{(q;q,p)_{n-k}\theta (q;p)^k}\frac{\big(q^{1/4}z, q^{1/4}/z;q^{1/2},p\big)_{n-k}}
{\big(cq^{\frac{3}{2}k}z,cq^{\frac{3}{2}k}/z;q,p\big)_{n-k}},
\end{gather*}
which can be proved by induction.
The theorem then follows by applying $\mathcal{D}_{c,q,p}^{(j)}$
to both sides of~(\ref{eqn:Taylor2}) and then setting $z=q^{1/4}$.
\end{proof}

In the following, we recover an elliptic quadratic summation by
Warnaar~\cite[Corollary 4.4; $b=a$]{Wa}, which was originally proved by using
inverse relations. Its $p=0$ case has been given earlier
by Gessel and Stanton~\cite[equation~(1.4)]{GS}.

\begin{Corollary}We have the following summation
\begin{gather*}
 \frac{(az, a/z;q,p)_n}{(cz, c/z;q,p)_n}\frac{\big(c q^{-1/4};q^{1/2},p\big)_{2n}}
{\big(a q^{-1/4};q^{1/2},p\big)_{2n}} \\
\qquad{}=\sum_{k=0}^n q^{\frac{k}{2}}\frac{\theta\big(c q^{\frac{3}{2}k-\frac{3}{4}};p\big)}
{\theta\big(c q^{-\frac{3}{4}};p\big)} \frac{\big(c/a, ac q^{n-1}, q^{-n};q,p\big)_k}
{(cz, c/z, q;q,p)_k}
\frac{\big(c q^{-\frac{3}{4}},q^{\frac{1}{4}}z, q^{\frac{1}{4}}/z;q^{1/2},p\big)_k}
{\big(aq^{-\frac{1}{4}},cq^{n-\frac{1}{4}},q^{\frac{3}{4}-n}/a;q^{1/2},p\big)_k}.
\end{gather*}
\end{Corollary}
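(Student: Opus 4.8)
The plan is to apply the quadratic elliptic Taylor expansion of Theorem~\ref{thm:Taylor2} to the well-poised monomial
\[
f(z)=\frac{(az,a/z;q,p)_n}{(cz,c/z;q,p)_n},
\]
and then to recognize the asserted identity as precisely that expansion once the $z$-independent (but $n$-dependent) factor $(aq^{-1/4};q^{1/2},p)_{2n}/(cq^{-1/4};q^{1/2},p)_{2n}$ has been carried over to the left-hand side. First one checks that $f\in W_c^n$: the numerator $(az,a/z;q,p)_n$ is symmetric in $z\leftrightarrow 1/z$ by~\eqref{tif}, and by repeated use of~\eqref{p1id} it satisfies the quasi-periodicity $g(pz)=p^{-n}z^{-2n}g(z)$ of a $D_n$ theta function, so it is an admissible choice of $g_n$ in the definition of $W_c^n$.

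Next I would compute $\mathcal{D}_{c,q,p}^{(k)}f(z)$ in closed form by iterating the degree-lowering rule~\eqref{degl}. After $j$ applications the roles of $a$ and $c$ are played by $aq^{j/2}$ and $cq^{3j/2}$ and the exponent $n$ by $n-j$, so the $(j+1)$-st step contributes the scalar $-2aq^{j/2}\,\theta\big(cq^{j}/a,\,acq^{n+j-1},\,q^{n-j};p\big)/\theta(q;p)$; multiplying these for $j=0,\dots,k-1$ and using $\prod_{j=0}^{k-1}\theta(q^{n-j};p)=(q;q,p)_n/(q;q,p)_{n-k}$ gives
\[
\mathcal{D}_{c,q,p}^{(k)}f(z)=\frac{(-2a)^kq^{k(k-1)/4}(q;q,p)_n\,(c/a,acq^{n-1};q,p)_k}{(q;q,p)_{n-k}\,\theta(q;p)^k}\,\frac{\big(aq^{k/2}z,aq^{k/2}/z;q,p\big)_{n-k}}{\big(cq^{3k/2}z,cq^{3k/2}/z;q,p\big)_{n-k}}.
\]
Evaluating at $z=q^{1/4}$ and using $(x;q,p)_{n-k}\,(xq^{1/2};q,p)_{n-k}=(x;q^{1/2},p)_{2(n-k)}$ turns the $z$-dependent ratio into $\big(aq^{k/2-1/4};q^{1/2},p\big)_{2(n-k)}\big/\big(cq^{3k/2-1/4};q^{1/2},p\big)_{2(n-k)}$; feeding this into the coefficient formula of Theorem~\ref{thm:Taylor2}, the powers of $\theta(q;p)$ and of $2$ cancel and one is left with
\[
f_k=\frac{a^k q^{k(k-2)/4}(q;q,p)_n\,(c/a,acq^{n-1};q,p)_k}{(q;q,p)_k\,(q;q,p)_{n-k}\,(cq^{k/2-3/4};q^{1/2},p)_{2k}}\,\frac{\big(aq^{k/2-1/4};q^{1/2},p\big)_{2(n-k)}}{\big(cq^{3k/2-1/4};q^{1/2},p\big)_{2(n-k)}}.
\]

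It then remains to massage $\sum_k f_k\,(q^{1/4}z,q^{1/4}/z;q^{1/2},p)_k/(cz,c/z;q,p)_k$ into the stated shape. Multiplying and dividing by $(aq^{-1/4};q^{1/2},p)_{2n}/(cq^{-1/4};q^{1/2},p)_{2n}$ and repeatedly applying $(x;s,p)_{\ell+m}=(x;s,p)_\ell(xs^\ell;s,p)_m$ (with $s=q$ and $s=q^{1/2}$) collapses the base-$q^{1/2}$ factorials: the shift $(cq^{3k/2-1/4};q^{1/2},p)_{2(n-k)}=(cq^{3k/2-3/4};q^{1/2},p)_{2(n-k)+1}/\theta(cq^{3k/2-3/4};p)$ is what produces the very-well-poised factor $\theta(cq^{3k/2-3/4};p)/\theta(cq^{-3/4};p)$, while the remaining $c$- and $a$-factorials reorganize into $(cq^{-3/4};q^{1/2},p)_k$, $(cq^{n-1/4};q^{1/2},p)_k$, $(aq^{-1/4};q^{1/2},p)_k$ and $(aq^{n-k/2-1/4};q^{1/2},p)_k$. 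Finally I would invoke the reflection~\eqref{tif} twice, once to convert $(aq^{n-k/2-1/4};q^{1/2},p)_k$ into $(q^{3/4-n}/a;q^{1/2},p)_k$ up to an explicit monomial in $a$ and $q$, and once to convert $(q;q,p)_n/(q;q,p)_{n-k}$ into $(q^{-n};q,p)_k$ up to an explicit monomial; all the accumulated powers of $q$ and $a$ (including the $q^{k(k-2)/4}$ and the other quadratic-in-$k$ contributions from the operator iteration) then cancel down to the single linear factor $q^{k/2}$, which is the hallmark of a quadratic summation, and the right-hand side of the corollary falls out.

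The hard part is entirely the bookkeeping of the last paragraph: keeping track of the many half-integral powers of $q$ and passing cleanly between base-$q$ and base-$q^{1/2}$ shifted factorials so that the very-well-poised term and all six numerator/denominator parameters land in exactly the stated slots. No identity beyond~\eqref{degl}, Theorem~\ref{thm:Taylor2} and the elementary theta relations is needed; in particular — and in contrast with Warnaar's original derivation via inverse relations — no summation theorem such as~\eqref{10V9} is invoked, the result being a pure Taylor expansion.
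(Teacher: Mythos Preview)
Your proof is correct and follows exactly the paper's approach: apply Theorem~\ref{thm:Taylor2} to $f(z)=(az,a/z;q,p)_n/(cz,c/z;q,p)_n$, iterate the degree-lowering rule~\eqref{degl} to compute $\mathcal{D}_{c,q,p}^{(k)}f$, and then carry out the factorial bookkeeping. The paper's own proof is the one-line statement ``apply Theorem~\ref{thm:Taylor2} to $f(z)=(az,a/z;q,p)_n/(cz,c/z;q,p)_n$''; you have simply unpacked the omitted details, and they are right.
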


\begin{proof}
We apply Theorem \ref{thm:Taylor2} to
\begin{gather*}
f(z)=\frac{(az, a/z;q,p)_n}{(cz, c/z;q,p)_n}.\tag*{\qed}
\end{gather*}
\renewcommand{\qed}{}
\end{proof}

\begin{Remark}
If we expand
\begin{gather*}
\frac{\big(q^{\frac{1}{4}}z, q^{\frac{1}{4}}/z;q^{\frac{1}{2}},p\big)_n}
{(cz,c/z;q,p)_n}
\end{gather*}
in terms of
\begin{gather*}
\frac{(az, a/z;q,p)_n}{(cz, c/z;q,p)_n}
\end{gather*}
using Theorem \ref{thm:Taylor}, we obtain
\begin{gather}
 \frac{\big(q^{\frac{1}{4}}z, q^{\frac{1}{4}}/z;q^{\frac{1}{2}},p\big)_n}
{\big(aq^{\frac{1}{4}}, q^{\frac{1}{4}}/a;q^{\frac{1}{2}},p\big)_n}
\frac{(ac, c/a;q,p)_n}{(cz, c/z;q,p)_n}\notag\\
 \qquad{}=\sum_{k=0}^n q^k \frac{\theta\big(acq^{2k-1};p\big)}{\theta\big(acq^{-1};p\big)}
\frac{\big(q^{-n},acq^{-1}, az, a/z, cq^{\frac{n}{2}-\frac{3}{4}},
cq^{\frac{n}{2}-\frac{1}{4}};q,p\big)_n}{\big(q, ac q^n , cz, c/z, aq^{\frac{1}{4}-\frac{n}{2}},
aq^{\frac{3}{4}-\frac{n}{2}};q,p\big)_n}.\label{eqn:remark1}
\end{gather}
At f\/irst glance this appears to be a true quadratic summation formula.
However, the right-hand side of~\eqref{eqn:remark1} is
\begin{gather*}
{}_{10}V_9\big(acq^{-1}; az, a/z, cq^{\frac{n}{2}-\frac{3}{4}},
cq^{\frac{n}{2}-\frac{1}{4}},q^{-n};q,p\big),
\end{gather*}
which, by Frenkel and Turaev's summation formula~\eqref{10V9},
can be reduced to
\begin{gather*}
\frac{\big(ac, c/a, q^{\frac{3}{4}-\frac{n}{2}}z,
q^{\frac{1}{4}-\frac{n}{2}}/z;q,p\big)_n}{\big(cz, c/z, aq^{\frac{3}{4}-\frac{n}{2}},
q^{\frac{3}{4}-\frac{n}{2}}/a;q,p\big)_n}.
\end{gather*}
Elementary manipulations can now be applied to transform this expression
to the left-hand side of \eqref{eqn:remark1}.
\end{Remark}

\section{Expansions involving cubic theta functions}

The cubic theta function $\gamma(z,a;p)$ with two independent
variables $z$ and $a$ in addition to the nome~$p$
was considered by S.~Bhargava~\cite{Bhar}.
(For a thorough treatment of the theory of cubic theta functions
in analogy to the theory of the classical Jacobi theta functions,
see~\cite{Schu}.) It is def\/ined by
\begin{gather}\label{def:cubictheta}
\gamma(z,a;p)=\sum_{k=-\infty}^{\infty}
\sum_{l=-\infty}^{\infty}p^{k^2 + kl+l^2}a^{k+l}z^{k-l}.
\end{gather}
This function, up to a normalization factor $\big(p^2;p^2\big)_\infty^2$
(independent from~$a$ and~$z$),
is almost equal to the following product of two modif\/ied
Jacobi theta functions
\begin{gather*}
\big(p^2;p^2\big)_\infty^2\theta\big({-}paz;p^2\big)\theta\big({-}pa/z;p^2\big)=
\sum_{k=-\infty}^{\infty}\sum_{l=-\infty}^{\infty}
p^{k^2+l^2}a^{k+l}z^{k-l},
\end{gather*}
which dif\/fers by the factor~$p^{kl}$ to
the summand of the double series in~\eqref{def:cubictheta}.
Because of this additional factor~$p^{kl}$, the cubic theta function
does not factorize into a product
of two modif\/ied Jacobi theta functions of such a simple form.
In principle though, the cubic theta function could be factorized
into two modif\/ied Jacobi theta functions, but their arguments
would have nontrivial expansions in $a$, $z$, and $p$.

From \eqref{def:cubictheta}, by replacing $(k,l)$ by $(l,k)$,
or $(k,l)$ by $(-l,-k)$, respectively, we immediately deduce
the symmetries~\cite{Bhar}
\begin{subequations}
\begin{gather}
\gamma(1/z,a;p) =\gamma(z,a;p),\label{eqn:condition1}\\
\intertext{and}
\gamma(z,1/a;p) =\gamma(z,a;p).\label{eqn:condition3}
\end{gather}
\end{subequations}
Further, from \eqref{def:cubictheta}, by replacing $(k,l)$ by
$(k+\lambda+\mu,l+\lambda)$, it is easy to verify that
for all inte\-gers~$\lambda$ and~$\mu$ the following functional equation holds~\cite{Bhar}:
\begin{gather*}
\gamma(z,a;p)=p^{3\lambda^2+3\lambda\mu+\mu^2}a^{2\lambda+\mu}z^\mu
\gamma\big(p^{\mu/2}z,p^{3(2\lambda+\mu)/2}a;p\big).
\end{gather*}
In particular, we have the quasi periodicities
\begin{subequations}
\begin{gather}
\gamma(pz,a;p) =\frac{1}{pz^2}\gamma(z,a;p),\label{eqn:condition2}\\
\intertext{and}
\gamma(z,p^3a;p) =\frac{1}{p^3a^2}\gamma(z,a;p).\label{eqn:condition4}
\end{gather}
\end{subequations}
Further, by separating the terms in the expansion of~$p$
according to whether the exponents of~$p$ are divisible by $3$ or not,
one can show~\cite{Bhar}
\begin{gather*}
\gamma(z,a;p)=\gamma\big(\sqrt{az^3},\sqrt{a^3/z^3};p^3\big)+
paz^{-1}\gamma\big(\sqrt{az^3},p^3\sqrt{a^3/z^3};p^3\big),
\end{gather*}
while separating the terms in the expansion of $z$
according to whether the exponents of $z$ are even or odd,
one has~\cite{BhFa}
\begin{gather*}
\gamma(z,a;p)=\big(p^6;p^6\big)_\infty\big(p^2;p^2\big)_\infty
\left[\theta\big({-}p^3a;p^6\big) \theta\big({-}pz^2;p^2\big)+
paz \theta\big({-}p^6a^2;p^6\big) \theta\big({-}p^2z^2;p^2\big)\right].
\end{gather*}

Cooper and Toh~\cite{CT} proved the following addition formulae which
will be useful in our computations.
\begin{Lemma}[\protect{\cite[Corollary~4.5]{CT}}]\label{lem:CT}
The following identities connecting modified Jacobi theta functions
and cubic theta functions hold:
\begin{subequations}
\begin{gather}
\gamma(z_1, \alpha;p)\theta(z_3/z_2 , z_2 z_3;p) -
\gamma(z_2 , \alpha;p)\theta(z_3/z_1 , z_1 z_3;p)\nonumber\\
\qquad{} =
\frac{z_3}{z_1}\gamma(z_3,\alpha;p)\theta(z_1/z_2,z_1 z_2;p),\label{eqn:CTlemma1}
\end{gather}
and
\begin{gather}
\gamma\big(z,\alpha_1;p^{\frac{1}{3}}\big)\theta(\alpha_3 / \alpha_2, \alpha_2 \alpha_3;p)
-\gamma\big(z,\alpha_2;p^{\frac{1}{3}}\big)\theta(\alpha_3 / \alpha_1, \alpha_1 \alpha_3;p)\nonumber\\
\qquad{}
=\frac{\alpha_3}{\alpha_1}\gamma\big(z,\alpha_3;p^{\frac{1}{3}}\big)
\theta(\alpha_1 / \alpha_2, \alpha_1 \alpha_2;p).\label{eqn:CTlemma2}
\end{gather}
\end{subequations}
\end{Lemma}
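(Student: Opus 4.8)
The plan is to establish each of \eqref{eqn:CTlemma1} and \eqref{eqn:CTlemma2} by the classical Liouville argument for theta functions: in both cases one singles out a single variable, shows that the difference of the two sides is a holomorphic function of that variable on $\C^*$ with a prescribed quasi-periodicity, and then exhibits more zeros than such a function can have unless it is identically zero. Consider \eqref{eqn:CTlemma1} first, regarded as an identity in $z:=z_1\in\C^*$ with $z_2$, $z_3$, $\alpha$ fixed, and let $D(z)$ denote its left-hand side minus its right-hand side. Since the double series \eqref{def:cubictheta} converges for every $z\in\C^*$ and $\ta(\,\cdot\,;p)$ is holomorphic on $\C^*$, the function $D$ is holomorphic on $\C^*$. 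The first step is a short computation using \eqref{tif}, \eqref{p1id} and the quasi-periodicity \eqref{eqn:condition2}: one checks that each of the three products making up $D$ — namely $\gamma(z,\alpha;p)\,\ta(z_3/z_2,z_2z_3;p)$, $\gamma(z_2,\alpha;p)\,\ta(z_3/z,zz_3;p)$, and $(z_3/z)\,\gamma(z_3,\alpha;p)\,\ta(z/z_2,zz_2;p)$ — gets multiplied by the same factor $(pz^2)^{-1}$ when $z$ is replaced by $pz$; hence $D(pz)=(pz^2)^{-1}D(z)$. A holomorphic function on $\C^*$ with this transformation law is a constant multiple of a theta function ``of order two'', so, if not identically zero, it has exactly two zeros (counted with multiplicity) in any fundamental annulus $\{\,|p|<|z|\le1\,\}$.

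The second step is to locate three zeros of $D$ lying in distinct $p^{\Z}$-orbits. At $z=z_2$ the first two terms of $D$ cancel, while the third one contains the factor $\ta(z_2/z_2,z_2^2;p)=\ta(1,z_2^2;p)$, which vanishes since $\ta(1;p)=0$; so $D(z_2)=0$. At $z=z_3$ the middle term carries the factor $\ta(z_3/z_3;p)=\ta(1;p)=0$, and a one-line check shows the first and third terms are then equal, so $D(z_3)=0$. Finally, $\gamma(1/z,\alpha;p)=\gamma(z,\alpha;p)$ from \eqref{eqn:condition1} together with \eqref{tif} gives $D(1/z)=D(z)$, whence $D(1/z_2)=0$ as well. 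For generic $z_2$, $z_3$ the points $z_2$, $1/z_2$, $z_3$ lie in three distinct $p^{\Z}$-orbits, which is one too many, so $D\equiv0$; the general case follows because both sides of \eqref{eqn:CTlemma1} are jointly holomorphic in $(z_1,z_2,z_3,\alpha)\in(\C^*)^4$.

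The proof of \eqref{eqn:CTlemma2} runs along identical lines, now with $z$, $\alpha_2$, $\alpha_3$ fixed and $\alpha_1$ the distinguished variable. The observation that makes the bookkeeping go through is that replacing $p$ by $p^{1/3}$ in \eqref{eqn:condition4} yields $\gamma(z,p\alpha;p^{1/3})=(p\alpha^2)^{-1}\gamma(z,\alpha;p^{1/3})$, so that $\gamma(z,\,\cdot\,;p^{1/3})$, as a function of its second argument, carries exactly the quasi-period $p$ and multiplier $(p\alpha^2)^{-1}$ shared by the $\ta(\,\cdot\,;p)$-factors in that variable; hence the difference of the two sides of \eqref{eqn:CTlemma2} satisfies $D(p\alpha_1)=(p\alpha_1^2)^{-1}D(\alpha_1)$. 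Three zeros are then found exactly as before: at $\alpha_1=\alpha_2$ (the first two terms cancel, and $\ta(1;p)=0$ in the third), at $\alpha_1=\alpha_3$ ($\ta(1;p)=0$ in the middle term, with the first and third terms equal), and, using $\gamma(z,1/\alpha;p^{1/3})=\gamma(z,\alpha;p^{1/3})$ from \eqref{eqn:condition3}, at $\alpha_1=1/\alpha_2$. Liouville's theorem together with continuity in the parameters then completes the proof.

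I expect the only mildly delicate points to be, first, the two elementary but error-prone computations checking that after $z\mapsto pz$ (respectively $\alpha_1\mapsto p\alpha_1$) every term acquires the same multiplier — these are direct applications of \eqref{tif} and \eqref{p1id} — and, second, verifying that the three located zeros really do represent distinct $p^{\Z}$-orbits for generic parameters, where the inversion symmetries \eqref{eqn:condition1} and \eqref{eqn:condition3} are the crucial input. An alternative, slightly less self-contained, route to \eqref{eqn:CTlemma1} would be to substitute the product expansion of $\gamma(z,a;p)$ in terms of $\ta(-pz^2;p^2)$ and $z\,\ta(-p^2z^2;p^2)$ recorded above, reducing the claim to two bilinear theta identities in base $p$ that can be matched with the addition formula \eqref{addf}; but the Liouville argument sketched here avoids invoking those expansions.
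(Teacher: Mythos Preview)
Your argument is correct. The quasi-periodicity check, the location of the three zeros (using the inversion symmetries \eqref{eqn:condition1} and \eqref{eqn:condition3} for the third zero), and the two-zero Liouville count for holomorphic functions on $\C^*$ satisfying $D(pz)=(pz^2)^{-1}D(z)$ all go through as you describe.

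Your route is, however, genuinely different from what the paper records. The paper does not supply its own proof; it cites Cooper and Toh, where the two identities are obtained by specializing a $3\times 3$ determinant evaluation in cubic theta functions, and it remarks that a direct proof is also possible by expanding the $\gamma$- and $\theta$-factors as infinite series and rearranging. Your Liouville argument is a third approach: it avoids both the determinant machinery and any series manipulation, trading these for the standard zero-counting argument for order-two theta functions. What the determinant proof buys is a uniform source from which both \eqref{eqn:CTlemma1} and \eqref{eqn:CTlemma2} (and further identities of the same shape) drop out simultaneously; what your approach buys is that it is entirely self-contained within the quasi-periodicities \eqref{eqn:condition1}--\eqref{eqn:condition4} already stated in the paper, and it makes transparent why the particular nome $p^{1/3}$ appears in \eqref{eqn:CTlemma2}: it is exactly the choice that gives $\gamma(z,\,\cdot\,;p^{1/3})$ the multiplier $(p\alpha^2)^{-1}$ under $\alpha\mapsto p\alpha$, matching the $\theta(\,\cdot\,;p)$-factors.
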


These two identities were proved in \cite{CT} by specializing
a $(3\times 3)$ determinant evaluation involving
cubic theta functions. They can also be proved directly,
expanding the cubic theta functions
and modif\/ied Jacobi theta functions as inf\/inite series,
together with clever series rearrangement.

Now we introduce the f\/irst cubic theta analogue of the
$q$-shifted factorial by
\begin{gather*}
\langle az, a/z;q,p\rangle_n :=
\prod_{j=0}^{n-1}\gamma\big(zq^{\frac{1-n}{2}+j},aq^{\frac{n-1}{2}};p\big).
\end{gather*}
From \eqref{eqn:condition2}
it is easy to see that the cubic shifted factorial satisf\/ies
\begin{gather*}
\langle apz, a/pz;q,p\rangle_n =\frac{1}{p^n z^{2n}}\langle az, a/z;q,p\rangle_n.
\end{gather*}
Together with \eqref{eqn:condition1}, this
implies that the quotient
\begin{gather*}
\frac{\langle az, a/z;q,p\rangle_n}{(cz, c/z;q,p)_n}
\end{gather*}
is in the space $W_c^n$.
Hence we can apply Theorem~\ref{thm:Taylor} to it, by which we obtain the
f\/irst cubic theta extension of Jackson's $_8\phi_7$ summation~\eqref{8phi7}.

\begin{Corollary}\label{cor:1c87}
We have the following summation
\begin{gather}
(bc, c/b;q,p)_n \frac{\langle az, a/z;q,p\rangle_n}{(cz, c/z;q,p)_n}
= \sum_{k=0}^n q^{nk}\left(\frac{c}{b}\right)^k \frac{\theta(bc q^{2k-1};p)}
{\theta(bcq^{-1};p)}
\frac{(q^{-n},bcq^{-1},bz, b/z;q,p)_k}
{(q,bcq^n, cz, c/z;q,p)_k}\notag\\
\qquad{} \times
\big\langle acq^{n-1},aq^{1-k}/c;q,p\big\rangle_k
\frac{\langle abq^k, aq^{-k}/b;q,p\rangle_n}
{\langle abq^n,aq^{-k}/b;q,p\rangle_k}.\label{1c87gl}
\end{gather}
\end{Corollary}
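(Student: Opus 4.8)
The plan is to apply the elliptic Taylor expansion of Theorem~\ref{thm:Taylor} to the function $f(z)=\langle az,a/z;q,p\rangle_n/(cz,c/z;q,p)_n$, which lies in $W_c^n$ (as already observed above, via~\eqref{eqn:condition1} and~\eqref{eqn:condition2}), expanding it in the basis $\{(bz,b/z;q,p)_k/(cz,c/z;q,p)_k\}_{0\le k\le n}$; that is, we take the free parameter in Theorem~\ref{thm:Taylor} to be $b$ rather than $a$. This gives $f(z)=\sum_{k=0}^n f_k(bz,b/z;q,p)_k/(cz,c/z;q,p)_k$ with
\begin{gather*}
f_k=\frac{(-1)^kq^{-k(k-1)/4}\theta(q;p)^k}{(2b)^k(q,c/b,bcq^{k-1};q,p)_k}\big[\mathcal D_{c,q,p}^{(k)}f(z)\big]_{z=bq^{k/2}},
\end{gather*}
so it remains to evaluate $\mathcal D_{c,q,p}^{(k)}f$ at $z=bq^{k/2}$ and to check that, after multiplying through by $(bc,c/b;q,p)_n$, the resulting coefficients match~\eqref{1c87gl}.

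The key ingredient is a cubic-theta analogue of the degree-lowering relation~\eqref{degl}:
\begin{gather*}
\mathcal D_{c,q,p}\frac{\langle az,a/z;q,p\rangle_n}{(cz,c/z;q,p)_n}=\frac{2c\,\theta(q^n;p)\,\gamma\big(cq^{(n-1)/2},aq^{(n-1)/2};p\big)}{\theta(q;p)}\cdot\frac{\langle aq^{1/2}z,aq^{1/2}/z;q,p\rangle_{n-1}}{\big(cq^{3/2}z,cq^{3/2}/z;q,p\big)_{n-1}}.
\end{gather*}
To prove it I would substitute $z\mapsto q^{\pm1/2}z$ into $f$, factor out of the two resulting cubic shifted factorials their common part (which is precisely $\langle aq^{1/2}z,aq^{1/2}/z;q,p\rangle_{n-1}$) and out of the two denominators $(cq^{\pm1/2}z,cq^{\mp1/2}/z;q,p)_n$ their common part $(cq^{1/2}z,cq^{1/2}/z;q,p)_{n-1}$, and then apply the Cooper--Toh addition formula~\eqref{eqn:CTlemma1} with $(z_1,z_2,z_3,\alpha)\mapsto\big(zq^{n/2},q^{n/2}/z,cq^{(n-1)/2},aq^{(n-1)/2}\big)$ to collapse the difference $\gamma(zq^{n/2},aq^{(n-1)/2};p)\theta(cq^{-1/2}z,cq^{n-1/2}/z;p)-\gamma(zq^{-n/2},aq^{(n-1)/2};p)\theta(cq^{n-1/2}z,cq^{-1/2}/z;p)$ into $\frac{c}{zq^{1/2}}\gamma(cq^{(n-1)/2},aq^{(n-1)/2};p)\theta(z^2,q^n;p)$. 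Cancelling the prefactor of~\eqref{defdf} and using $\frac{\theta(czq^{1/2},cq^{1/2}/z;p)}{\theta(cq^{n-1/2}z,cq^{n-1/2}/z;p)}\cdot\frac{1}{(cq^{1/2}z,cq^{1/2}/z;q,p)_{n-1}}=\frac{1}{(cq^{3/2}z,cq^{3/2}/z;q,p)_{n-1}}$ then yields the relation. This mirrors the derivation of~\eqref{degl} from~\eqref{addf} in the example following Theorem~\ref{thm:Taylor}, and is the genuinely delicate step: matching all the arguments in the Cooper--Toh identity, together with keeping track of the cubic-factorial notation, is where I expect the main work to lie.

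Iterating the relation $k$ times --- at step $i$ replacing $(a,c,n)$ by $(aq^{i/2},cq^{3i/2},n-i)$, so that the second slot of the emerging $\gamma$ stays equal to $aq^{(n-1)/2}$ while the first runs over $cq^{(n-1)/2},cq^{(n-1)/2+1},\dots$ --- gives
\begin{gather*}
\mathcal D^{(k)}_{c,q,p}\frac{\langle az,a/z;q,p\rangle_n}{(cz,c/z;q,p)_n}=(2c)^kq^{3k(k-1)/4}\frac{(q;q,p)_n\,\langle acq^{n-1},aq^{1-k}/c;q,p\rangle_k}{(q;q,p)_{n-k}\,\theta(q;p)^k}\cdot\frac{\langle aq^{k/2}z,aq^{k/2}/z;q,p\rangle_{n-k}}{\big(cq^{3k/2}z,cq^{3k/2}/z;q,p\big)_{n-k}},
\end{gather*}
where $\prod_{i=0}^{k-1}\theta(q^{n-i};p)=(q;q,p)_n/(q;q,p)_{n-k}$ and $\prod_{i=0}^{k-1}\gamma(cq^{(n-1)/2+i},aq^{(n-1)/2};p)=\langle acq^{n-1},aq^{1-k}/c;q,p\rangle_k$ both follow directly from the definitions. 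Finally, setting $z=bq^{k/2}$ turns $\langle aq^{k/2}z,aq^{k/2}/z;q,p\rangle_{n-k}$ into $\langle abq^k,aq^{-k}/b;q,p\rangle_n/\langle abq^n,aq^{-k}/b;q,p\rangle_k$ and $(cq^{3k/2}z,cq^{3k/2}/z;q,p)_{n-k}$ into $(bcq^{2k},cq^k/b;q,p)_{n-k}$; substituting into the formula for $f_k$ and simplifying with the elementary identities $(q^{-n};q,p)_k=(-1)^kq^{k(k-1)/2-kn}(q;q,p)_n/(q;q,p)_{n-k}$, $(c/b;q,p)_n=(c/b;q,p)_k(cq^k/b;q,p)_{n-k}$ and $(bcq^{k-1};q,p)_k(bcq^{2k};q,p)_{n-k}=(bcq^{k-1};q,p)_{n+1}/\theta(bcq^{2k-1};p)$ reproduces exactly the $k$-th summand on the right-hand side of~\eqref{1c87gl} after multiplication by $(bc,c/b;q,p)_n$.
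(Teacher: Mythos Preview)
Your proposal is correct and follows essentially the same route as the paper: you apply Theorem~\ref{thm:Taylor} (with the free parameter taken to be $b$) to $f(z)=\langle az,a/z;q,p\rangle_n/(cz,c/z;q,p)_n$, establish the degree-lowering relation for $\mathcal D_{c,q,p}^{(k)}f$ via the Cooper--Toh addition formula~\eqref{eqn:CTlemma1}, and then evaluate at $z=bq^{k/2}$ and simplify. Your account is in fact more detailed than the paper's, spelling out the specific substitution in~\eqref{eqn:CTlemma1} and the elementary $q,p$-factorial identities needed in the final cleanup.
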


\begin{proof}
By using \eqref{eqn:CTlemma1} in Lemma~\ref{lem:CT}, we can prove
by induction that
\begin{gather*}
 \mathcal{D}_{c,q,p}^{(k)}\left( \frac{\langle az, a/z;q,p\rangle_n}
{(cz, c/z;q,p)_n}\right)\\
\qquad{}=
(2c)^k q^{\frac{3}{4}k(k-1)}\frac{(q^n;q^{-1},p)_k}
{\theta(q;p)^k}\prod_{j=0}^{k-1}\gamma(cq^{\frac{n-1}{2}+j},aq^{\frac{n-1}{2}};p)
\frac{\langle aq^{\frac{k}{2}}z, aq^{\frac{k}{2}}/z;q,p\rangle_{n-k}}
{(cq^{\frac{3}{2}k}z, cq^{\frac{3}{2}k}/z;q,p)_{n-k}}\\
\qquad{}=(2c)^k q^{\frac{3}{4}k(k-1)}\frac{(q^n;q^{-1},p)_k}
{\theta(q;p)^k}\langle acq^{n-1},aq^{1-k}/c;q,p\rangle_k
\frac{\langle aq^{\frac{k}{2}}z, aq^{\frac{k}{2}}/z;q,p\rangle_{n-k}}
{(cq^{\frac{3}{2}k}z, cq^{\frac{3}{2}k}/z;q,p)_{n-k}}.
\end{gather*}
Then the corollary follows from Theorem~\ref{thm:Taylor} while expanding
in the basis
\begin{gather*}
f(z)=\frac{(bz, b/z;q,p)_n}{(cz, c/z;q,p)_n}.\tag*{\qed}
\end{gather*}
\renewcommand{\qed}{}
\end{proof}

To recover Jackson's $_8\phi_7$ summation from Corollary~\ref{cor:1c87},
substitute
\begin{gather*}
a\mapsto\frac{-a}{p(1+a^2q^{n-1})}
\end{gather*}
in \eqref{1c87gl}, multiply both sides of the identity by
$(1+a^2q^{n-1})^n$ and let $p\to 0$.
When $p\to 0$, the usual theta shifted factorials clearly reduce to the
$q$-shifted factorials. That is, the quotient on the left-hand side
reduces to
\begin{gather*}
\lim_{p\to 0}\frac{(bc,c/b;q,p)_n}{(cz,c/z;q,p)_n}=
\frac{(bc,c/b;q)_n}{(cz,c/z;q)_n}.
\end{gather*}
What happens with the cubic theta shifted factorial?
We have
\begin{gather*}
 \lim_{p\to 0}\big(1+a^2q^{n-1}\big)^n
\Big\langle\frac{-az}{p(1+a^2q^{n-1})},
\frac{-a}{p(1+a^2q^{n-1})z};p\Big\rangle_n\\
 =
\big(1+a^2q^{n-1}\big)^n\lim_{p\to 0}\prod_{j=0}^{n-1}
\gamma \left(zq^{\frac{1-n}2+j},
\frac{-aq^{\frac{n-1}2}}{p(1+a^2q^{n-1})};p\right)\\
 =
\big(1+a^2q^{n-1}\big)^n\prod_{j=0}^{n-1}\lim_{p\to 0}
\sum_{k=-\infty}^\infty\sum_{l=-\infty}^\infty
(-1)^{k+l}p^{k^2+kl+l^2-k-l}\left(\frac{aq^{\frac{n-1}2}}
{1+a^2q^{n-1}}\right)^{k+l}\big(zq^{\frac{1-n}2+j}\big)^{k-l}.
\end{gather*}
Now it is easy to see that for $p\to 0$ only three
terms in the various double inf\/inite series survive.
These three terms correspond to the cases $(k,l)=(0,0),(1,0),(0,1)$.
The last expression thus reduces to{\samepage
\begin{gather*}
\big(1+a^2q^{n-1}\big)^n\prod_{j=0}^{n-1}\left(1-\frac{aq^{\frac{n-1}2}}
{1+a^2q^{n-1}}\big(zq^{\frac{1-n}2+j}+z^{-1}q^{\frac{n-1}2-j}\big)\right)\\
\qquad{} =\prod_{j=0}^{n-1}\left(1+a^2q^{n-1}-aq^{\frac{n-1}2}
\big(zq^{\frac{1-n}2+j}+z^{-1}q^{\frac{n-1}2-j}\big)\right)\\
\qquad{} =
\prod_{j=0}^{n-1}(1-azq^j)\big(1-aq^{n-1-j}/z\big)=
(az,a/z;q)_n.
\end{gather*}
We take similar limits on the right-hand side
of \eqref{1c87gl}.}

Our next result involves elliptic interpolation of cubic theta shifted
factorials.
\begin{Corollary}
We have the following Karlsson--Minton type identity involving
cubic theta functions
\begin{gather*}
 \frac{\big(a^2 q,q;q,p\big)_n}{(aqz,aq/z;q,p)_n}\langle bz,b/z;q,p\rangle_n\notag\\
\qquad{}= \sum_{k=0}^n q^{k(n+1)}\frac{\theta\big(a^2 q^{2k};p\big)}
{\theta\big(a^2;p\big)}\frac{\big(q^{-n},a^2,az,a/z;q,p\big)_k}{\big(q,a^2 q^{n+1},aqz,aq/z;q,p\big)_k}
\big\langle abq^k, bq^{-k}/a;q,p\big\rangle_n.
\end{gather*}
\end{Corollary}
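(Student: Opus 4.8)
The plan is to deduce this identity from a single application of the elliptic interpolation formula of Theorem~\ref{thm:id}, following the same pattern as the proofs of Corollaries~\ref{cor:ex1} and~\ref{cor:ex2}. The function to be interpolated is
$$f(z)=\frac{\langle bz,b/z;q,p\rangle_n}{(cz,c/z;q,p)_n},$$
where $c$ is an auxiliary parameter that will drop out at the end. First I would confirm that $f\in W_c^n$: this is precisely the remark made just before Corollary~\ref{cor:1c87}, since by \eqref{eqn:condition2} the cubic theta shifted factorial $\langle bz,b/z;q,p\rangle_n$ has the required quasi-periodicity under $z\mapsto pz$, and by \eqref{eqn:condition1} it is symmetric under $z\mapsto 1/z$, so the numerator is an even theta function of degree $n$ and $f$ lies in $W_c^n$.

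Next I would substitute this $f$ into Theorem~\ref{thm:id}. On the left-hand side the factor $(cz,c/z;q,p)_n$ appearing in the formula cancels the denominator of $f$, leaving $\dfrac{(a^2q,q;q,p)_n}{(ac,c/a,aqz,aq/z;q,p)_n}\langle bz,b/z;q,p\rangle_n$; on the right-hand side the only non-routine ingredient is the evaluation $f(aq^k)=\dfrac{\langle abq^k,bq^{-k}/a;q,p\rangle_n}{(acq^k,cq^{-k}/a;q,p)_n}$, which is immediate from the definition of $\langle\,\cdot\,\rangle_n$. Multiplying both sides of the resulting identity by $(ac,c/a;q,p)_n$ then produces exactly the claimed left-hand side $\dfrac{(a^2q,q;q,p)_n}{(aqz,aq/z;q,p)_n}\langle bz,b/z;q,p\rangle_n$, and moves the factor $(ac,c/a;q,p)_n$ into each summand on the right.

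The main obstacle, and really the only computation, is then to show that all remaining $c$-dependence on the right cancels. Concretely one must verify the $\theta$-shifted factorial identity
$$\frac{(ac,c/a;q,p)_n\,(aq/c,acq^{n};q,p)_k}{(ac,aq^{1-n}/c;q,p)_k\,(acq^{k},cq^{-k}/a;q,p)_n}=q^{kn},$$
which follows routinely from the reflection relation $\ta(x;p)=-x\,\ta(1/x;p)$ together with the splitting rule $(x;q,p)_{m+\ell}=(x;q,p)_m\,(xq^m;q,p)_\ell$ from \cite[Section~11.2]{GR}; this is the same cancellation that is implicit in the proof of Corollary~\ref{cor:ex2}, whose right-hand side already carries the enhanced power $q^{k(n+1)}$. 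Once this is established, the factor $q^k$ coming from Theorem~\ref{thm:id} is upgraded to $q^{k(n+1)}$, the $c$-free very-well-poised pieces $\ta(a^2q^{2k};p)/\ta(a^2;p)$ and $(q^{-n},a^2,az,a/z;q,p)_k/(q,a^2q^{n+1},aqz,aq/z;q,p)_k$ pass through unchanged, the denominator $(acq^k,cq^{-k}/a;q,p)_n$ of $f(aq^k)$ has been absorbed, and $\langle abq^k,bq^{-k}/a;q,p\rangle_n$ is exactly what survives; collecting these pieces gives the asserted formula. As a consistency check I would finally let $p\to 0$ after the parameter substitution used just before this corollary, under which $\langle bz,b/z;q,p\rangle_n$ degenerates to $(bz,b/z;q)_n$, and verify that one recovers the expected $q$-Karlsson--Minton sum.
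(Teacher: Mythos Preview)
Your proposal is correct and follows exactly the paper's approach: the paper's entire proof is ``We apply Theorem~\ref{thm:id} to $f(z)=\langle bz,b/z;q,p\rangle_n/(cz,c/z;q,p)_n$,'' and you have simply unpacked the resulting elementary cancellation of the $c$-dependent factors (which the paper leaves implicit, just as in Corollary~\ref{cor:ex2}).
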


\begin{proof}
We apply Theorem \ref{thm:id} to
\begin{gather*}
f(z)=\frac{\langle bz, b/z;q,p\rangle_n}{(cz, c/z;q,p)_n}.\tag*{\qed}
\end{gather*}
\renewcommand{\qed}{}
\end{proof}

More generally, we have the following Karlsson--Minton type
identity involving cubic theta functions.
\begin{Corollary}
We have
\begin{gather*}
 \frac{\big(a^2 q, q;q,p\big)_n}{(aqz, aq/z;q,p)_n}\prod_{i=1}^s
\theta(b_i z, b_i/z;p)\prod_{j=1}^{n-s}\gamma(z,d_j;p)
 =\sum_{k=0}^n q^{k(n+1)}\frac{\theta(a^2 q^{2k};p)}{\theta(a^2;p)}\\
 \qquad{}\times
\frac{\big(q^{-n},a^2,az,a/z;q,p\big)_k}{\big(q, a^2 q^{n+1},aqz, aq/z;q,p\big)_k}
\prod_{i=1}^s \theta\big(ab_i q^k , b_i q^{-k}/a;p\big)
\prod_{j=1}^{n-s}\gamma\big(aq^k, d_j;p\big).
\end{gather*}
\end{Corollary}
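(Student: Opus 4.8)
The plan is to apply Theorem~\ref{thm:id} to
\[
f(z)=\frac{\prod_{i=1}^{s}\theta(b_i z,b_i/z;p)\,\prod_{j=1}^{n-s}\gamma(z,d_j;p)}{(cz,c/z;q,p)_n},
\]
in direct analogy with the proofs of Corollaries~\ref{cor:ex1}, \ref{cor:ex2} and the cubic-theta analogue above. First I would check that $f\in W_c^n$. The numerator $g(z)$ is holomorphic for $z\neq 0$; it is invariant under $z\mapsto 1/z$ by the symmetry of the factors $\theta(b_i z,b_i/z;p)$ and by~\eqref{eqn:condition1}; and under $z\mapsto pz$ each of the $s$ ordinary theta factors and each of the $n-s$ cubic theta factors $\gamma(z,d_j;p)$ acquires a factor $p^{-1}z^{-2}$ (for the latter by~\eqref{eqn:condition2}, for the former by~\eqref{p1id}), so that $g(pz)=p^{-n}z^{-2n}g(z)$. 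Hence $g$ is an admissible numerator of degree $n$ and $f\in W_c^n$, so Theorem~\ref{thm:id} applies. The essential observation is that a cubic theta function $\gamma(z,d;p)$ is a $BC_1$ theta function of degree one, exactly like $\theta(bz,b/z;p)$, so the two kinds of factors may be freely mixed in the numerator of $f$.

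Next I would substitute this $f$ into the interpolation identity of Theorem~\ref{thm:id}. On the left-hand side the factor $(cz,c/z;q,p)_n$ cancels the denominator of $f$, while on the right-hand side $f(aq^k)=\prod_i\theta(ab_i q^k,b_i q^{-k}/a;p)\prod_j\gamma(aq^k,d_j;p)$ appears divided by $(caq^k,cq^{-k}/a;q,p)_n$. The remaining work is the $c$-elimination already used for Corollary~\ref{cor:ex2}: multiply the identity through by $(ac,c/a;q,p)_n$ and invoke
\[
\frac{(aq/c,acq^n;q,p)_k}{(ac,aq^{1-n}/c;q,p)_k\,(caq^k,cq^{-k}/a;q,p)_n}=\frac{q^{kn}}{(ac,c/a;q,p)_n},
\]
which one verifies from the telescoping relation $(ac;q,p)_k(caq^k;q,p)_n=(ac;q,p)_n(acq^n;q,p)_k$ together with the companion cancellation of the $1/c$-factorials (after rewriting these via $\theta(x;p)=-x\,\theta(1/x;p)$), the latter also producing the power $q^{kn}$. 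This removes $c$ entirely and turns the summand's $q^k$ into $q^{k(n+1)}$, yielding exactly the asserted formula, with $f(aq^k)$ contributing the product $\prod_i\theta(ab_i q^k,b_i q^{-k}/a;p)\prod_j\gamma(aq^k,d_j;p)$ on the right.

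I do not expect a genuine obstacle here: once the $c$-cancellation lemma is in hand --- and it is the same lemma that drives Corollaries~\ref{cor:ex1}--\ref{cor:ex2} --- the remaining manipulations are routine. The only points needing care are the exponent bookkeeping in the $c$-cancellation and the (harmless) fact that $\gamma(aq^k,d_j;p)$ does not simplify any further but simply remains as written in the final identity.
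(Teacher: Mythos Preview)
Your proposal is correct and follows exactly the paper's own proof: apply Theorem~\ref{thm:id} to
\(
f(z)=\frac{\prod_{i=1}^{s}\theta(b_i z,b_i/z;p)\,\prod_{j=1}^{n-s}\gamma(z,d_j;p)}{(cz,c/z;q,p)_n}.
\)
The paper states only this one line, leaving implicit the verification that $f\in W_c^n$ and the $c$-cancellation that you have spelled out.
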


\begin{proof}
We apply Theorem \ref{thm:id} to
\begin{gather*}
f(z)=\frac{\prod\limits_{i=1}^s \theta(b_i z, b_i/z;p)
\prod\limits_{j=1}^{n-s}\gamma(z,d_j;p)}{(cz, c/z;q,p)_n}.\tag*{\qed}
\end{gather*}
\renewcommand{\qed}{}
\end{proof}

Our next result concerns a cubic theta extension of
Gessel and Stanton's quadratic summation \cite[equation~(1.4)]{GS}.

\begin{Corollary}\label{cor:cgs}
We have the following summation
\begin{gather}
 \frac{\langle az, a/z;q,p\rangle_n}{(cz, c/z;q,p)_n}
\big(cq^{-\frac{1}{4}}, cq^{\frac{1}{4}};q,p\big)_n\notag\\
 \qquad{} =\sum_{k=0}^n c^k q^{\frac{k}{4}(k-2)+nk}
\frac{\theta\big(cq^{\frac{3}{2}k-\frac{3}{4}};p\big)}
{\theta\big(cq^{\frac{k}{2}-\frac{3}{4}};p\big)}\frac{\big(q^{-n};q,p\big)_k}{(q;q,p)_k}
\frac{\big(c q^{-\frac{1}{4}};q^{\frac{1}{2}},p\big)_k}
{\big(cq^{n-\frac{1}{4}};q^{\frac{1}{2}},p\big)_k}
\frac{\big(q^{\frac{1}{4}}z, q^{\frac{1}{4}}/z;q^{\frac{1}{2}},p\big)_k}
{(cz, c/z;q,p)_k}\notag\\
 \qquad\quad{}\times \big\langle ac q^{n-1},aq^{1-k}/c;q,p\big\rangle_k
 \big\langle aq^{\frac{k}{2}+\frac{1}{4}},
aq^{\frac{k}{2}-\frac{1}{4}};q,p\big\rangle_{n-k}.\label{cor:cgsgl}
\end{gather}
\end{Corollary}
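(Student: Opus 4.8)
The plan is to apply the quadratic elliptic Taylor expansion of Theorem~\ref{thm:Taylor2} to the function
\[
f(z)=\frac{\langle az, a/z;q,p\rangle_n}{(cz, c/z;q,p)_n},
\]
which, as observed just before Corollary~\ref{cor:1c87}, lies in $W_c^n$ by \eqref{eqn:condition1} and \eqref{eqn:condition2}. The one nonroutine ingredient needed is the closed form of $\mathcal{D}_{c,q,p}^{(k)}f(z)$, and this has already been supplied: in the proof of Corollary~\ref{cor:1c87} it was shown by induction, using the addition formula \eqref{eqn:CTlemma1} of Lemma~\ref{lem:CT}, that
\[
\mathcal{D}_{c,q,p}^{(k)}f(z)=(2c)^k q^{\frac{3}{4}k(k-1)}\frac{(q^n;q^{-1},p)_k}{\theta(q;p)^k}\,\big\langle acq^{n-1},aq^{1-k}/c;q,p\big\rangle_k\,\frac{\langle aq^{\frac{k}{2}}z, aq^{\frac{k}{2}}/z;q,p\rangle_{n-k}}{(cq^{\frac{3}{2}k}z, cq^{\frac{3}{2}k}/z;q,p)_{n-k}}.
\]
So the first step is simply to import this formula.

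The second step is to specialize $z=q^{1/4}$, as demanded by the coefficient formula of Theorem~\ref{thm:Taylor2}. Here the relevant elementary facts are that, reading off the definition of $\langle\,\cdot\,,\cdot\,\rangle$, one has $\langle aq^{\frac{k}{2}}z, aq^{\frac{k}{2}}/z;q,p\rangle_{n-k}\big|_{z=q^{1/4}}=\langle aq^{\frac{k}{2}+\frac14}, aq^{\frac{k}{2}-\frac14};q,p\rangle_{n-k}$, while $(cq^{\frac{3}{2}k}z, cq^{\frac{3}{2}k}/z;q,p)_{n-k}\big|_{z=q^{1/4}}$ collapses to the base-$q^{1/2}$ theta shifted factorial $(cq^{\frac{3}{2}k-\frac14};q^{1/2},p)_{2(n-k)}$; the basis function $(q^{1/4}z,q^{1/4}/z;q^{1/2},p)_k/(cz,c/z;q,p)_k$ of \eqref{eqn:Taylor2} is already exactly the one on the right-hand side of \eqref{cor:cgsgl}. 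Substituting the $\mathcal{D}_{c,q,p}^{(k)}f$ formula into the expression for $f_k$ in Theorem~\ref{thm:Taylor2} then causes the factors $(-1)^k$, $2^k$, and $\theta(q;p)^k$ to cancel in pairs, and the base-$q^{-1}$ factorial $(q^n;q^{-1},p)_k$ is converted to $(q^{-n};q,p)_k$ up to an explicit sign and $q$-power via $\theta(x;p)=-x\theta(1/x;p)$.

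The third step is to multiply both sides of the expansion by the $z$-free constant $(cq^{-\frac14},cq^{\frac14};q,p)_n$, which equals $(cq^{-\frac14};q^{1/2},p)_{2n}$; this is precisely what is needed to clear the leftover $c$-shifted factorial in the denominator obtained above. Using the elementary splitting $(\alpha;q^{1/2},p)_{r+s}=(\alpha;q^{1/2},p)_r(\alpha q^{r/2};q^{1/2},p)_s$ repeatedly, the combination $(cq^{-\frac14};q^{1/2},p)_{2n}\big/\big[(cq^{\frac{k}{2}-\frac34};q^{1/2},p)_{2k}\,(cq^{\frac{3}{2}k-\frac14};q^{1/2},p)_{2(n-k)}\big]$ simplifies to $\dfrac{\theta(cq^{\frac{3}{2}k-\frac34};p)}{\theta(cq^{\frac{k}{2}-\frac34};p)}\,\dfrac{(cq^{-\frac14};q^{1/2},p)_k}{(cq^{n-\frac14};q^{1/2},p)_k}$, which is exactly the $c$-part appearing in \eqref{cor:cgsgl}.

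Finally one collects the accumulated powers of $q$ and $c$ into the prefactor $c^k q^{\frac{k}{4}(k-2)+nk}$, leaving $(q^{-n};q,p)_k/(q;q,p)_k$, the very-well-poised theta ratio just described, the cubic factor $\langle acq^{n-1},aq^{1-k}/c;q,p\rangle_k$, the surviving cubic factorial $\langle aq^{\frac{k}{2}+\frac14},aq^{\frac{k}{2}-\frac14};q,p\rangle_{n-k}$, and the basis quotient $(q^{1/4}z,q^{1/4}/z;q^{1/2},p)_k/(cz,c/z;q,p)_k$; this reproduces \eqref{cor:cgsgl} term by term. I expect the main obstacle to be purely computational: the careful bookkeeping of $q$-powers and of the conversions between shifted factorials in bases $q$, $q^{1/2}$, and $q^{-1}$ in the third and fourth steps. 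There is no new structural ingredient beyond Theorem~\ref{thm:Taylor2}, the $\mathcal{D}_{c,q,p}^{(k)}$ evaluation borrowed from Corollary~\ref{cor:1c87}, and basic theta-function arithmetic.
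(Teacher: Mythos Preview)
Your proposal is correct and follows exactly the paper's approach: apply Theorem~\ref{thm:Taylor2} to $f(z)=\langle az,a/z;q,p\rangle_n/(cz,c/z;q,p)_n$, using the closed form for $\mathcal{D}_{c,q,p}^{(k)}f$ already established in the proof of Corollary~\ref{cor:1c87}. The paper records only this one line, while you have (accurately) spelled out the subsequent bookkeeping with the $q^{1/2}$-factorials and the conversion of $(q^n;q^{-1},p)_k$ to $(q^{-n};q,p)_k$.
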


\begin{proof}
We apply Theorem \ref{thm:Taylor2} to
\begin{gather*}
f(z)=\frac{\langle az, a/z;q,p\rangle_n}{(cz, c/z;q,p)_n}.\tag*{\qed}
\end{gather*}
\renewcommand{\qed}{}
\end{proof}

Similarly to the way we recovered Jackson's $_8\phi_7$
summation from Corollary~\ref{cor:1c87},
Gessel and Stanton's quadratic summation can be readily
obtained by substituting $a\mapsto -p^{-1}a/\big(1+a^2q^{n-1}\big)$
in \eqref{cor:cgsgl},
multiplying both sides by $(1+a^2q^{n-1})^n$
and taking the limit $p\to0$.

Next, we def\/ine the second cubic theta shifted factorial, with base $p^{1/3}$:
\begin{gather*}
\langle\langle az, a/z;q,p^{\frac{1}{3}}\rangle\rangle_n := \prod_{j=0}^{n-1}
\gamma\big(aq^{\frac{n-1}{2}},zq^{\frac{1-n}{2}+j};p^{\frac{1}{3}}\big).
\end{gather*}

Recalling equations~\eqref{eqn:condition2} and~\eqref{eqn:condition4}
(which we reformulate after interchanging~$a$ and~$z$),
\begin{gather*}
\gamma(a, z;p) =\gamma(a,1/z;p),\qquad
\gamma(a,z;p) =p^3 z^2 \gamma\big(a,p^3 z;p\big),
\end{gather*}
we see that
\begin{gather*}
\big\langle\big\langle apz, a/pz;q,p^{\frac{1}{3}}\big\rangle\big\rangle_n =
\frac{1}{p^n z^{2n}}\big\langle\big\langle az, a/z;q,p^{\frac{1}{3}}\big\rangle\big\rangle_n.
\end{gather*}
This implies that the quotient
\begin{gather*}
\frac{\big\langle\big\langle az, a/z;q,p^{\frac{1}{3}}\big\rangle\big\rangle_n}{(cz, c/z;q,p)_n}
\end{gather*}
is also in the space $W_c^n$. Thus, Theorem~\ref{thm:Taylor}
can be applied to it, by which we obtain the
second cubic theta extension of Jackson's $_8\phi_7$ summation \eqref{8phi7}.

\begin{Corollary}\label{cor:2c87}
We have the following summation
\begin{gather}
 \frac{\big\langle\big\langle bz, b/z;q,p^{\frac{1}{3}}\big\rangle\big\rangle_n}
{(cz, c/z;q,p)_n}(ac, c/a;q,p)_n
 =\sum_{k=0}^n q^{nk}\left( \frac{c}{a}\right)^k
\frac{\theta\big(ac q^{2k-1};p\big)}{\theta\big(ac q^{-1};p\big)}
\frac{\big(q^{-n},acq^{-1},az, a/z;q,p\big)_k}{\big(q, acq^n , cz, c/z;q,p\big)_k}
\notag\\ \qquad\quad{}\times
\big\langle\big\langle bc q^{n-1}, bq^{1-k}/c;q,p^{\frac{1}{3}}\big\rangle\big\rangle_k
\big\langle\big\langle abq^k,b/a;q,p^{\frac{1}{3}}\big\rangle\big\rangle_{n-k}.\label{2c87gl}
\end{gather}
\end{Corollary}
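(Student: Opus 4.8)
The plan is to proceed exactly as in the proof of Corollary~\ref{cor:1c87}, the only structural change being that the \emph{second} Cooper--Toh addition formula~\eqref{eqn:CTlemma2} now plays the role that~\eqref{eqn:CTlemma1} played there. We have already observed that $\langle\langle bz,b/z;q,p^{1/3}\rangle\rangle_n/(cz,c/z;q,p)_n$ lies in $W_c^n$, so Theorem~\ref{thm:Taylor} applies to $f(z)=\langle\langle bz,b/z;q,p^{1/3}\rangle\rangle_n/(cz,c/z;q,p)_n$, expanded with respect to the basis $\{(az,a/z;q,p)_k/(cz,c/z;q,p)_k,\ 0\le k\le n\}$. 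This reduces the whole problem to the determination of the Taylor coefficients $f_k$, hence to evaluating $[\mathcal D_{c,q,p}^{(k)}f(z)]_{z=aq^{k/2}}$.

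The core of the argument is an explicit closed form for $\mathcal D_{c,q,p}^{(k)}f$, which I would establish by induction on~$k$. The case $k=0$ is trivial and $k=1$ is just the definition~\eqref{defdf}. For the inductive step one applies $\mathcal D_{cq^{3k/2},q,p}$ to the level-$k$ formula: the Askey--Wilson shift $z\mapsto q^{\pm1/2}z$ acts on each factor $\gamma\big(bq^{(n-1)/2},zq^{(1-n)/2+j};p^{1/3}\big)$ of $\langle\langle bz,b/z;q,p^{1/3}\rangle\rangle_n$ by shifting its \emph{second} argument, and the difference of the two shifted cubic factorials telescopes by~\eqref{eqn:CTlemma2} applied with $\alpha_1,\alpha_2,\alpha_3$ chosen to be the relevant arguments (which differ by half-integer powers of~$q$); the bases match perfectly, since inside the $\gamma$'s of~\eqref{eqn:CTlemma2} the base is $p^{1/3}$ while the theta factors there carry base~$p$, exactly as in $\langle\langle\cdot\rangle\rangle$ and in the theta prefactor of~$\mathcal D_{c,q,p}$. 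Simultaneously the denominator $(cz,c/z;q,p)_n$ simplifies via the ordinary addition formula~\eqref{addf}, precisely as in the derivation of~\eqref{degl}. One expects an identity of the shape
\begin{gather*}
\mathcal D_{c,q,p}^{(k)}\left(\frac{\langle\langle bz,b/z;q,p^{1/3}\rangle\rangle_n}{(cz,c/z;q,p)_n}\right)
=(2c)^kq^{\frac34 k(k-1)}\frac{(q^n;q^{-1},p)_k}{\theta(q;p)^k}\\
\qquad{}\times\langle\langle bcq^{n-1},bq^{1-k}/c;q,p^{1/3}\rangle\rangle_k\,
\frac{\langle\langle bq^{k/2}z,bq^{k/2}/z;q,p^{1/3}\rangle\rangle_{n-k}}{(cq^{3k/2}z,cq^{3k/2}/z;q,p)_{n-k}},
\end{gather*}
in complete parallel with the formula used in the proof of Corollary~\ref{cor:1c87}.

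Finally, setting $z=aq^{k/2}$ turns the residual factorials into $\langle\langle abq^k,b/a;q,p^{1/3}\rangle\rangle_{n-k}$ and $(acq^{2k},cq^k/a;q,p)_{n-k}$. Substituting this into the coefficient formula of Theorem~\ref{thm:Taylor}, multiplying through by $(ac,c/a;q,p)_n$, and reorganising with routine $q,p$-shifted-factorial identities (reversing $(q^n;q^{-1},p)_k$ into $(-1)^kq^{nk-k(k-1)/2}(q^{-n};q,p)_k$; cancelling $(c/a;q,p)_n$ against $(c/a;q,p)_k(cq^k/a;q,p)_{n-k}$; and collapsing $(ac;q,p)_n/\big((acq^{k-1};q,p)_k(acq^{2k};q,p)_{n-k}\big)$ into $\theta(acq^{2k-1};p)(ac;q,p)_{k-1}/(acq^n;q,p)_k$, which is exactly the very-well-poised factor $\theta(acq^{2k-1};p)/\theta(acq^{-1};p)$ times $(acq^{-1};q,p)_k/(acq^n;q,p)_k$) brings the right-hand side into the form displayed in~\eqref{2c87gl}. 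The one genuinely new step, and therefore the main obstacle, is the induction for $\mathcal D_{c,q,p}^{(k)}f$: pinning down the exact power $q^{\frac34 k(k-1)}$ and, above all, verifying that the chosen specialisation of~\eqref{eqn:CTlemma2} really does collapse the $\gamma$-chain under the half-integer shift. Everything downstream of that mirrors the proof of Corollary~\ref{cor:1c87} step by step.
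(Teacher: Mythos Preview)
Your proposal is correct and follows the paper's own proof essentially line for line: the paper too applies Theorem~\ref{thm:Taylor} to $f(z)=\langle\langle bz,b/z;q,p^{1/3}\rangle\rangle_n/(cz,c/z;q,p)_n$, establishes by induction (via~\eqref{eqn:CTlemma2}) the exact closed form for $\mathcal D_{c,q,p}^{(k)}f$ that you wrote down, and then evaluates at $z=aq^{k/2}$. Your identification of~\eqref{eqn:CTlemma2} as the replacement for~\eqref{eqn:CTlemma1} is precisely the point of the proof.
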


\begin{proof}
Note that by using \eqref{eqn:CTlemma2} in Lemma \ref{lem:CT},
we can show by induction that
\begin{gather*}
 \mathcal{D}_{c,q,p}^{(k)}\left( \frac{\big\langle\big\langle bz, b/z;
q,p^{\frac{1}{3}}\big\rangle\big\rangle_n}
{(cz, c/z;q,p)_n}\right)\\
 \qquad{}=(2c)^k q^{\frac{3}{4}k(k-1)}\frac{\big(q^n;q^{-1},p\big)_k}
{\theta(q;p)^k}\prod_{j=0}^{k-1}\gamma\big(bq^{\frac{n-1}{2}},
cq^{\frac{n-1}{2}+j};p^{\frac{1}{3}}\big)\frac{\big\langle\big\langle bq^{\frac{k}{2}}z,
bq^{\frac{k}{2}}/z;q,p^{\frac{1}{3}}\big\rangle\big\rangle_{n-k}}{\big(cq^{\frac{3}{2}k}z,
cq^{\frac{3}{2}k}/z;q,p\big)_{n-k}}\\
\qquad{} =(2c)^k q^{\frac{3}{4}k(k-1)}\frac{\big(q^n;q^{-1},p\big)_k}
{\theta(q;p)^k}\big\langle\big\langle bc q^{n-1}, bq^{1-k}/c;
q,p^{\frac{1}{3}}\big\rangle\big\rangle_k
\frac{\big\langle\big\langle bq^{\frac{k}{2}}z, bq^{\frac{k}{2}}/z;
q,p^{\frac{1}{3}}\big\rangle\big\rangle_{n-k}}
{\big(cq^{\frac{3}{2}k}z, cq^{\frac{3}{2}k}/z;q,p\big)_{n-k}}.
\end{gather*}
Using this, we apply Theorem \ref{thm:Taylor} to
\begin{gather*}
f(z)= \frac{\big\langle\big\langle bz, b/z;q,p^{\frac{1}{3}}\big\rangle\big\rangle_n}
{(cz, c/z;q,p)_n}.\tag*{\qed}
\end{gather*}
\renewcommand{\qed}{}
\end{proof}

To recover Jackson's $_8\phi_7$ summation from Corollary~\ref{cor:2c87},
substitute
\begin{gather*}
b\mapsto\frac{-b}{p^{\frac 13}\big(1+b^2q^{n-1}\big)}
\end{gather*}
in \eqref{2c87gl}, multiply both sides of the identity by
$\big(1+b^2q^{n-1}\big)^n$ and let $p\to 0$.
When $p\to 0$, the usual theta shifted factorials reduce to the
$q$-shifted factorials and
the cubic theta shifted factorial on the left-hand side of \eqref{2c87gl}
becomes
\begin{gather*}
 \lim_{p\to 0}\big(1+b^2q^{n-1}\big)^n\Big\langle \Big\langle\frac{-bz}{p^{\frac 13}
\big(1+b^2q^{n-1}\big)},
\frac{-b}{p^{\frac 13}\big(1+b^2q^{n-1}\big)z};p^{\frac 13}
\Big\rangle \Big\rangle_n\\
 \qquad{}=
\big(1+b^2q^{n-1}\big)^n\lim_{p\to 0}\prod_{j=0}^{n-1}\gamma \left(
\frac{-bq^{\frac{n-1}2}}{p^{\frac 13}(1+b^2q^{n-1})},
zq^{\frac{1-n}2+j};p^{\frac 13}\right)\\
\qquad{} =
\big(1+b^2q^{n-1}\big)^n\prod_{j=0}^{n-1}\lim_{p\to 0}
\sum_{k=-\infty}^\infty\sum_{l=-\infty}^\infty
(-1)^{k-l}p^{\frac 13(k^2+kl+l^2-k+l)}\\
\qquad\quad{}\times \big(zq^{\frac{1-n}2+j}\big)^{k+l}
\left(\frac{bq^{\frac{n-1}2}}
{1+b^2q^{n-1}}\right)^{k-l}.
\end{gather*}
Now it is easy to see that for $p\to 0$ only three
terms in the various double inf\/inite series survive.
These correspond to the cases $(k,l)=(0,0),(1,0),(0,-1)$.
The last expression thus reduces to
\begin{gather*}
\big(1+b^2q^{n-1}\big)^n\prod_{j=0}^{n-1}\left(1-\frac{bq^{\frac{n-1}2}}
{1+b^2q^{n-1}}\big(zq^{\frac{1-n}2+j}+z^{-1}q^{\frac{n-1}2-j}\big)\right)\\
\qquad {} =\prod_{j=0}^{n-1}\left(1+b^2q^{n-1}-bq^{\frac{n-1}2}
\big(zq^{\frac{1-n}2+j}+z^{-1}q^{\frac{n-1}2-j}\big)\right)\\
\qquad{} =
\prod_{j=0}^{n-1}\big(1-bzq^j\big)\big(1-bq^{n-1-j}/z\big)=
(bz,b/z;q)_n.
\end{gather*}
We take similar limits on the right-hand side
of equation~\eqref{2c87gl}.

Our f\/inal result concerns another cubic theta extension of
Gessel and Stanton's quadratic summation \cite[equation~(1.4)]{GS}.

\begin{Corollary}\label{cor:cgs2}
We have the following summation
\begin{gather*}
 \frac{\big\langle\big\langle az, a/z;q,p^{\frac 13}\big\rangle\big\rangle_n}{(cz, c/z;q,p)_n}
\big(cq^{-\frac{1}{4}}, cq^{\frac{1}{4}};q,p\big)_n \\
\qquad{} =\sum_{k=0}^n c^k q^{\frac{k}{4}(k-2)+nk}
\frac{\theta\big(cq^{\frac{3}{2}k-\frac{3}{4}};p\big)}
{\theta\big(cq^{\frac{k}{2}-\frac{3}{4}};p\big)}\frac{\big(q^{-n};q,p\big)_k}{(q;q,p)_k}
\frac{\big(c q^{-\frac{1}{4}};q^{\frac{1}{2}},p\big)_k}
{\big(cq^{n-\frac{1}{4}};q^{\frac{1}{2}},p\big)_k}
\frac{\big(q^{\frac{1}{4}}z, q^{\frac{1}{4}}/z;q^{\frac{1}{2}},p\big)_k}
{(cz, c/z;q,p)_k} \\
 \qquad\quad{}\times \big\langle\big\langle ac q^{n-1},aq^{1-k}/c;
q,p^{\frac 13}\big\rangle\big\rangle_k
\big\langle\big\langle aq^{\frac{k}{2}+\frac{1}{4}},
aq^{\frac{k}{2}-\frac{1}{4}};q,p^{\frac 13}\big\rangle\big\rangle_{n-k}.
\end{gather*}
\end{Corollary}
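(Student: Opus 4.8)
The plan is to apply the quadratic elliptic Taylor expansion theorem, Theorem~\ref{thm:Taylor2}, to
\begin{gather*}
f(z)=\frac{\big\langle\big\langle az, a/z;q,p^{1/3}\big\rangle\big\rangle_n}{(cz, c/z;q,p)_n},
\end{gather*}
which, as observed just before Corollary~\ref{cor:2c87}, lies in $W_c^n$. Expansion \eqref{eqn:Taylor2} gives $f(z)=\sum_{k=0}^n f_k \frac{(q^{1/4}z, q^{1/4}/z;q^{1/2},p)_k}{(cz,c/z;q,p)_k}$, and multiplying through by $(cq^{-1/4},cq^{1/4};q,p)_n$ puts the left-hand side into the shape appearing in the statement; it then remains to evaluate the coefficients $f_k$ in closed form.

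For the coefficients I would reuse the evaluation of $\mathcal{D}_{c,q,p}^{(k)}f(z)$ established in the proof of Corollary~\ref{cor:2c87} via the second Cooper--Toh addition formula \eqref{eqn:CTlemma2}; with $b$ there replaced by $a$,
\begin{gather*}
\mathcal{D}_{c,q,p}^{(k)}f(z)=(2c)^k q^{\frac{3}{4}k(k-1)}\frac{\big(q^n;q^{-1},p\big)_k}{\theta(q;p)^k}\big\langle\big\langle ac q^{n-1}, aq^{1-k}/c;q,p^{1/3}\big\rangle\big\rangle_k \frac{\big\langle\big\langle aq^{\frac{k}{2}}z, aq^{\frac{k}{2}}/z;q,p^{1/3}\big\rangle\big\rangle_{n-k}}{\big(cq^{\frac{3}{2}k}z, cq^{\frac{3}{2}k}/z;q,p\big)_{n-k}}.
\end{gather*}
Setting $z=q^{1/4}$, as Theorem~\ref{thm:Taylor2} prescribes, the numerator cubic factorial collapses, directly from the definition of $\langle\langle\,\cdot\,\rangle\rangle$, to $\big\langle\big\langle aq^{\frac{k}{2}+\frac14}, aq^{\frac{k}{2}-\frac14};q,p^{1/3}\big\rangle\big\rangle_{n-k}$, while the denominator becomes $(cq^{\frac32 k+\frac14},cq^{\frac32 k-\frac14};q,p)_{n-k}$.

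Feeding this into the coefficient formula of Theorem~\ref{thm:Taylor2} and simplifying is then routine $q,p$-shifted-factorial bookkeeping: the factors $\theta(q;p)^k$ cancel; the powers of $2$ combine with $(2c)^k$ to give $c^k$; the factor $(q^n;q^{-1},p)_k$ is rewritten as $(-1)^k q^{nk-\binom k2}(q^{-n};q,p)_k$ by means of the reflection $\theta(x;p)=-x\,\theta(1/x;p)$ from \eqref{tif}; and the $q$-powers $q^{-k/4}$, $q^{\frac34 k(k-1)}$ and $q^{nk-\binom k2}$ collect to $q^{\frac{k}{4}(k-2)+nk}$. It remains to reconcile the various mixed-base factorials: writing $(cq^{-1/4},cq^{1/4};q,p)_n=(cq^{-1/4};q^{1/2},p)_{2n}$, $(cq^{\frac32 k+\frac14},cq^{\frac32 k-\frac14};q,p)_{n-k}=(cq^{\frac32 k-\frac14};q^{1/2},p)_{2(n-k)}$, and splitting $(cq^{k/2-3/4};q^{1/2},p)_{2k}$ likewise into products over even and odd indices, a telescoping leaves exactly the quotient $\frac{\theta(cq^{3k/2-3/4};p)}{\theta(cq^{k/2-3/4};p)}\frac{(cq^{-1/4};q^{1/2},p)_k}{(cq^{n-1/4};q^{1/2},p)_k}$ on the right-hand side. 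Keeping track of precisely which theta factors survive the telescoping (in particular the ``hole'' produced in the index set of the denominator) is the only delicate point; everything else is forced by the structure of Theorem~\ref{thm:Taylor2}. The whole argument is the exact analogue of the proof of Corollary~\ref{cor:cgs}, with the first cubic shifted factorial replaced throughout by the second one and \eqref{eqn:CTlemma1} replaced by \eqref{eqn:CTlemma2}.
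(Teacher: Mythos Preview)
Your proposal is correct and follows exactly the paper's approach: the paper's proof consists of the single instruction to apply Theorem~\ref{thm:Taylor2} to $f(z)=\langle\langle az,a/z;q,p^{1/3}\rangle\rangle_n/(cz,c/z;q,p)_n$, and you have faithfully (and correctly) carried out the details, reusing the computation of $\mathcal{D}_{c,q,p}^{(k)}f$ from the proof of Corollary~\ref{cor:2c87} and performing the requisite $q,p$-shifted factorial simplifications.
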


\begin{proof}
We apply Theorem~\ref{thm:Taylor2} to
\begin{gather*}
f(z)=\frac{\big\langle\big\langle az, a/z;q,p^{\frac 13}\big\rangle\big\rangle_n}
{(cz, c/z;q,p)_n}.\tag*{\qed}
\end{gather*}
 \renewcommand{\qed}{}
\end{proof}

\subsection*{Acknowledgements}
The work in this paper has been supported by FWF Austrian Science Fund
grant F50-08 within the SFB ``Algorithmic and enumerative combinatorics''.

\pdfbookmark[1]{References}{ref}
\LastPageEnding

\end{document}